\documentclass[a4paper]{amsart}
\usepackage[utf8]{inputenc}
\usepackage[T1]{fontenc}
\usepackage[all]{xy}
\usepackage{hyperref}

\usepackage{amsmath,amstext,amsthm,amssymb,amscd,version, mathrsfs}
\usepackage{xcolor}
\usepackage{comment}
\usepackage{tikz}
\usepackage{tikz-cd}

\def\bA{\mathbb{A}}

\def\bC{\mathbb{C}}

\def\cI{\mathcal{I}}

\def\cO{\mathcal{O}}
\def\cP{\mathcal{P}}

\def\tD{\texttt{D}}
\def\tL{\texttt{L}}
\def\tX{\texttt{X}}
\def\tZ{\texttt{Z}}

\newtheorem{thmintro}{Theorem}

\newtheorem{conjintro}{Conjecture}

\newtheorem{thm}{Theorem}[section]
\newtheorem*{thm*}{Theorem}

\newtheorem{lem}[thm]{Lemma}
\newtheorem{prop}[thm]{Proposition}
\newtheorem{cor}[thm]{Corollary}

\newtheorem{defn}[thm]{Definition}

\newtheorem*{claim}{Claim}

\DeclareMathOperator{\Sp}{Sp}

\DeclareMathOperator{\Hom}{Hom}

\DeclareMathOperator{\Alb}{Alb}

\DeclareMathOperator{\HH}{H}
\DeclareMathOperator{\Spec}{Spec}
\DeclareMathOperator{\Lie}{Lie}
\DeclareMathOperator{\codim}{codim}

% Sectioning
\newcommand{\newpar}[1]{\subsection{\texorpdfstring{}{}}}
\newcommand{\parref}[1]{\hyperref[#1]{\S\ref*{#1}}}
\newcommand{\chapref}[1]{\hyperref[#1]{Chapter~\ref*{#1}}}

% Commands for section numbering
\counterwithin{equation}{subsection}
%\counterwithout{subsection}{section}
\counterwithin{figure}{subsection}
%\renewcommand{\thesection}{\Alph{section}}

% For the introduction
\newcounter{intro}

\newtheorem{intro-conjecture}[intro]{Conjecture}
\newtheorem{intro-corollary}[intro]{Corollary}
\newtheorem{intro-theorem}[intro]{Theorem}

% Enumerate
\usepackage{enumitem}

%%%%%%%%%%%%%%%%%%%%%%%%%%%%%%%%%%%%%%%%
\title[The relative Green-Griffiths-Lang conjecture]{The relative Green-Griffiths-Lang conjecture for families of varieties of maximal Albanese dimension}
\author{Yohan Brunebarbe}
\date{\today}

\begin{document}

\maketitle

\begin{abstract}
We propose a generalization of the Green-Griffiths-Lang conjecture to the relative setting and prove that a strong form of it holds for families of varieties of maximal Albanese dimension.
A key step of the proof consists in a truncated second main theorem type estimate in Nevanlinna theory for families of abelian varieties.
\end{abstract}

\section{Introduction}

\subsection{Morphisms of general type}
Let $k$ be an algebraically closed field of characteristic zero. Recall that a positive-dimensional irreducible smooth proper $k$-variety $X$ is said of general type if its canonical bundle $\omega_{X/k}$ is big, meaning that there exists a positive real number $C$ such that for $m \gg 1$
\[ \dim_k \left( \HH^0(X, (\omega_{X / k})^{\otimes m}) \right) \geq C \cdot m^{\dim X}. \]
More generally, an irreducible proper $k$-variety is said of general type if it is birational to an irreducible smooth proper $k$-variety of general type, and a proper $k$-variety is of general type if at least one of its irreducible components is of general type. By convention, any zero-dimensional $k$-variety is of general type. We propose the following definition of morphisms of general type.

\begin{defn}
A surjective proper morphism $f\colon X \to Y$ between two irreducible $k$-varieties is of general type if, for every geometric point $\bar \eta \colon \Spec \left( \Omega \right) \to Y$ lying over the generic point of $Y$, the proper $\Omega$-variety $X_ {\bar \eta}$ is of general type. More generally, a proper morphism $f\colon X \to Y$ between two $k$-varieties is of general type if at least one of the irreducible components of $X$ is of general type over its image.
\end{defn}

The basic properties of this notion are explored in section \ref{Generalities on proper morphisms of general type}.

\subsection{Special subsets}

For any (non necessarily surjective) proper morphism $X\to Y$ between two complex schemes, the \emph{special} subsets $\Sp_{alg}(X/Y)$, $\Sp_{ab}(X/Y)$, and $\Sp_h(X/Y)$ of $X$ are by definition the union respectively of
\begin{itemize}
\item all (positive-dimensional) irreducible closed subvarieties of $X$ not of general type that are contained in a fibre of $X \to Y$;
\item the images of all non-constant rational maps $A \dasharrow X$ with source an abelian variety $A$ and whose image are contained in a fibre of $X \to Y$;
\item all the entire curves of $X$ contained in a fibre of $X \to Y$.
\end{itemize}

It is not clear from their definition whether these subsets are Zariski-closed in $X$. With the notation from \cite{Bruni-Hyperlargelocalsystem}, for every $\ast \in \{alg, ab, h \}$, one has by definition:
\[\Sp_\ast(X/Y) := \bigcup_{y \in Y} \Sp_\ast(X_y). \]
In particular, by \cite[Proposition 2.1]{Bruni-Hyperlargelocalsystem}, the inclusions 
\begin{equation}\label{ab versus alg}
\Sp_{ab}(X/Y) \subset \Sp_{alg}(X/Y)
\end{equation}
and 
\begin{equation}\label{ab versus h}
\Sp_{ab}(X/Y) \subset \Sp_h(X/Y)
\end{equation}
always hold.

\subsection{The relative Green-Griffiths-Lang conjecture}

We propose the following generalization of the Green-Griffiths-Lang conjecture.
\begin{conjintro}\label{relative Lang conjecture}
Let $X \to Y$ be a proper morphism between two complex algebraic varieties. Then the following conditions are equivalent:
\begin{enumerate}
\item The morphism $X \to Y$ is of general type;
\item $ \Sp_{alg}(X/Y)$ is not Zariski dense in $X$;
\item $ \Sp_{ab}(X/Y)$ is not Zariski dense in $X$;
\item $ \Sp_{h}(X/Y)$ is not Zariski dense in $X$.
\end{enumerate} 
\end{conjintro}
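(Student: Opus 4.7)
Since this is stated as a conjecture, I will sketch a strategy rather than a complete proof, flagging where the genuine obstacles lie. The plan is to close the four conditions into a cycle by combining the two tautological containments with three substantive implications.

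The inclusions $\Sp_{ab}(X/Y) \subset \Sp_{alg}(X/Y)$ and $\Sp_{ab}(X/Y) \subset \Sp_h(X/Y)$ recorded in (\ref{ab versus alg}) and (\ref{ab versus h}) immediately give (2) $\Rightarrow$ (3) and (4) $\Rightarrow$ (3): if a larger set is not Zariski dense, neither is any of its subsets. It therefore suffices to establish (3) $\Rightarrow$ (1), (1) $\Rightarrow$ (2), and (1) $\Rightarrow$ (4); all other arrows follow by chasing the resulting diagram.

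For (3) $\Rightarrow$ (1), I would argue contrapositively. If $X \to Y$ fails to be of general type, some irreducible component $X_i$ has non-general-type geometric generic fiber over its image $Y_i$. After passage to a smooth proper model, a structure theorem is needed to produce many rational maps from abelian varieties into the fibers: in the maximal-Albanese-dimension case this is provided by Ueno-Kawamata (a non-general-type subvariety of an abelian variety is foliated by translates of a positive-dimensional abelian subvariety), so one composes with the relative Albanese and spreads the resulting translates over $Y_i$, producing a Zariski-dense family of points in $\Sp_{ab}(X/Y) \cap X_i$ that contradicts (3). In full generality the analogous statement is itself conjectural (it would follow from the Iitaka additivity and the absolute Green-Griffiths-Lang conjecture), so this step is \emph{no easier} than the classical converse to the Lang conjecture. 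For (1) $\Rightarrow$ (2), I would seek a relative version of Kawamata's theorem: once $X \to Y$ is of general type with fibers of maximal Albanese dimension, the fiberwise Ueno locus (union of abelian translates contained in the fiber) should form a proper closed subset of $X$, controlled uniformly over $Y$ by spreading out the relative Albanese morphism of a smooth model. For (1) $\Rightarrow$ (4), the strategy is identical but with Kawamata's theorem replaced by its Nevanlinna counterpart: the Noguchi-Winkelmann-Yamanoi result that entire curves in a subvariety of general type of an abelian variety are confined to the Ueno locus. In both cases the absolute input is known in the maximal-Albanese-dimension regime, and the task is to make it uniform across the family.

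\textbf{The main obstacle.} The decisive difficulty is (1) $\Rightarrow$ (4): even in the absolute setting the Green-Griffiths conjecture is open, and in families one needs a \emph{truncated} second main theorem for an abelian scheme $\cA \to Y$ rather than for a single abelian variety. The truncation is essential, because untruncated SMT estimates cannot separate the Ueno locus from the rest of a fiber of general type. Producing a uniform truncated SMT demands simultaneous control of the Nevanlinna characteristic, the proximity function against a big divisor, and the counting function with multiplicity capped at one, all while the abelian variety is allowed to vary with the parameter $y \in Y$; this requires careful handling of degenerations and of the variation of polarizations. This is precisely the technical input the abstract announces, and without it even the maximal-Albanese-dimension case of (1) $\Rightarrow$ (4) is out of reach. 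Beyond the maximal-Albanese-dimension setting, the full conjecture appears to be beyond current techniques.
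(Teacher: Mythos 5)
This statement is a \emph{conjecture}, not a theorem, and the paper does not prove it: it is precisely Conjecture~\ref{relative Lang conjecture} stated as an open problem, with only special cases established in Theorem~\ref{relative Lang conjecture in the abelian case} and Theorem~\ref{main corollary}. You correctly recognize this and respond with a strategy sketch rather than a purported proof, which is the appropriate thing to do.

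Your reduction of the four-way equivalence to the three substantive arrows $(3)\Rightarrow(1)$, $(1)\Rightarrow(2)$, $(1)\Rightarrow(4)$ via the tautological containments $\Sp_{ab}\subset\Sp_{alg}$, $\Sp_{ab}\subset\Sp_h$ is logically sound, and your diagnosis of where the difficulty lies matches what the paper actually does in the tractable case: the absolute inputs are Kawamata's characterization theorem and the Bloch--Ochiai/Kawamata description of the fibrewise special set for subvarieties of abelian varieties, the analytic input for $(1)\Rightarrow(4)$ is Yamanoi-style work culminating in a truncated second main theorem, and the genuine novelty of the paper is making that truncated SMT uniform over an abelian \emph{scheme}. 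The organization in the paper differs slightly from yours: instead of proving each arrow separately, Proposition~\ref{first reduction} shows it suffices to prove two statements — (a) a non-general-type integral proper variety (in the restricted class $\cP$) has $\Sp_\ast(X)=X$, and (b) a general-type surjective proper morphism (in class $\cP$, with geometrically integral generic fibre) has non-dense $\Sp_\ast(X/Y)$ — from which all four equivalences and the Zariski-closedness of the special set follow; this buys the extra conclusion that $\Sp_\ast$ is Zariski closed, which your arrow-chasing does not directly give.

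One small imprecision to flag: if $X\to Y$ fails to be of general type then, by the paper's definition, \emph{every} irreducible component of $X$ has non-general-type geometric generic fibre over its image, not merely some component; this is what you actually need, since to conclude $\Sp_{ab}(X/Y)$ is Zariski dense in the (possibly reducible) $X$ you must cover each component. You also rely in several places on Theorem~\ref{generization general type} implicitly (to pass from a statement at the generic fibre to a statement about all fibres), which is worth stating explicitly since it is a nontrivial input (Nakayama's theorem). Beyond these local points, your assessment of the state of the art — that the full conjecture is out of reach of current techniques and the maximal-Albanese-dimension regime is the frontier — agrees with the paper.
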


\subsection{Main results}

\begin{thmintro}\label{relative Lang conjecture in the abelian case}
Let $X \to S$ be a proper morphism between two complex algebraic varieties. Assume that there exists an abelian scheme $A \to S$, an $A$-torsor $P \to S$ and a finite $S$-morphism $X \to P$. Then, 
\begin{enumerate}
\item The inclusions in (\ref{ab versus alg}) and (\ref{ab versus h}) are equalities;\\
In such a case, the special subset is denoted by $\Sp(X/Y)$ without any risk of confusion.
\item $ \Sp(X/Y)$ is Zariski-closed in $X$;
\item $ \Sp(X/Y) \neq X$ if and only if the morphism $X \to Y$ is of general type.
\end{enumerate} 
In particular, the set of $s \in S$ such that $\Sp(X_s) = \varnothing$ (respectively $\Sp(X_s) \neq X_s$) is Zariski open in $S$.  
\end{thmintro}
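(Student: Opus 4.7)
The statement comprises three substantive assertions plus an openness addendum. The plan is: (1) follows fiberwise from classical Bloch--Ochiai--Kawamata; (2), the Zariski-closedness of $\Sp(X/S)$ in $X$, is the main point and will be established by a relative Noetherian stabilization argument on the translation action of $A$ on $P$; (3) follows from (2) combined with the fiberwise Ueno--Kawamata equivalence; the openness addendum follows from (2) combined with semicontinuity of fiber dimensions. The main obstacle will be (2).

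For (1), since $\Sp_\ast(X/S) = \bigcup_{s \in S} \Sp_\ast(X_s)$ by definition, it suffices to treat each fiber. Choose a base point of the torsor $P_s$; then $X_s$ is finite over the abelian variety $A_s$. Bloch--Ochiai--Kawamata then forces every entire curve in $X_s$ to Zariski-close inside a translate of an abelian subvariety of $A_s$ contained in $X_s$, and the Ueno--Kawamata structure theorem gives the same for every positive-dimensional irreducible subvariety of $X_s$ not of general type. Since positive-dimensional translated abelian subvarieties are themselves not of general type, one obtains $\Sp_h(X_s) = \Sp_{ab}(X_s) = \Sp_{alg}(X_s)$, the common set being the union of positive-dimensional translated abelian subvarieties of $A_s$ contained in $X_s$.

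For (2), first reduce to the case where $X \hookrightarrow P$ is a closed subscheme: under a finite morphism of proper varieties a subvariety is of general type iff its image is, so $\Sp(X/S)$ is the preimage under $X \to \mathrm{Im}(X \to P)$ of the special set for the image. Assuming $X \subset P$ closed, use the $A$-torsor structure to define, for each integer $n \geq 1$, the closed subscheme
\[ V_n := \{(x, a) \in X \times_S A : x + k a \in X \text{ for } 1 \leq k \leq n\}. \]
The descending chain $V_1 \supset V_2 \supset \cdots$ of closed subschemes of the Noetherian scheme $X \times_S A$ stabilizes at some $V_\infty$. For $x \in X_s$, the fiber $(V_\infty)_x \subset A_s$ consists of $a$ with $x + \mathbb{N} a \subset X_s$. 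For $a$ non-torsion, the closed submonoid $M := \overline{\mathbb{N} a} \subset A_s$ has its component $B_a$ through $0$ an irreducible closed submonoid, hence an abelian subvariety; it is positive-dimensional (otherwise translation by $a$ would permute the finitely many components of $M$ cyclically and make $M$ a finite set, contradicting that $\mathbb{N} a$ is infinite), and $x + B_a \subset X_s$. Since we work over $\mathbb{C}$, any positive-dimensional algebraic subset of $A_s$ contains non-torsion points (torsion being countable); conversely, any positive-dimensional abelian subvariety $B$ with $x + B \subset X_s$ is contained in $(V_\infty)_x$. Hence
\[ \Sp(X/S) = \{x \in X : \dim (V_\infty)_x \geq 1\}, \]
which is Zariski-closed in $X$ by upper semicontinuity of fiber dimension for the proper morphism $V_\infty \to X$ (proper because $A \to S$ is). The delicate step to verify carefully is precisely this fiberwise identification, which rests on combining the torsion/non-torsion dichotomy with the submonoid argument.

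For (3) and the openness addendum, granting (2): $\Sp(X/S) \neq X$ iff some irreducible component $X_\alpha$ of $X$ is not contained in the closed subset $\Sp(X/S)$, iff the generic geometric fiber $(X_\alpha)_{\bar\eta}$ is of general type (by classical Ueno--Kawamata applied to the finite cover of $A_{\bar\eta}$), iff $X \to S$ is of general type. Openness of $\{s : \Sp(X_s) = \varnothing\}$ is then immediate: this set is the complement in $S$ of the image of $\Sp(X/S)$ under the proper morphism $X \to S$. Openness of $\{s : \Sp(X_s) \neq X_s\} = \{s : X_s \text{ is of general type}\}$ follows by applying Ueno's criterion (general type iff stabilizer in $A_s$ of the image $X'_s \subset P_s$ is finite) to the relative stabilizer $\mathrm{Stab}(X'/P) \subset A$, a closed subgroup scheme of the abelian scheme $A/S$ whose finite-fiber locus is open by upper semicontinuity of fiber dimension.
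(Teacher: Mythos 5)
Your argument collapses at the reduction step in (2), where you assert that ``under a finite morphism of proper varieties a subvariety is of general type iff its image is,'' and hence that one may replace $X$ by its image in $P$. This equivalence fails: a finite surjective morphism $W \to W'$ with $W'$ of general type forces $W$ to be of general type, but the converse is false --- a hyperelliptic curve of genus $\geq 2$ is a finite cover of $\mathbb{P}^1$. Consequently, for a finite \emph{surjective} $\pi\colon X \to P$ one only gets the inclusion $\Sp_{alg}(X/S) \subset \pi^{-1}(\Sp_{alg}(P/S))$, which is useless here since $\Sp_{alg}(P/S) = P$ (no nonzero positive-dimensional torsor over an abelian scheme is of general type). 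The same confusion appears in your description of (1): ``translated abelian subvarieties of $A_s$ contained in $X_s$'' has no meaning when $X_s$ is a ramified finite cover of $A_s$ rather than a closed subvariety, and the Bloch--Ochiai theorem, as you invoke it, classifies entire curves in subvarieties of abelian varieties, not in their finite covers.

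The case where $X \to P$ is finite surjective (rather than a closed immersion) is precisely the hard part of the theorem, and is where the paper's actual content lives. Over a point this is Yamanoi's theorem on varieties of maximal Albanese dimension, and even there it requires a truncated Second Main Theorem rather than elementary algebraic-geometric maneuvers. The paper proves a relative version of Yamanoi's truncated SMT (Theorem~\ref{truncated SMT for abelian schemes}, proved in Section~6 via jet spaces, Nevanlinna counting functions, and a codimension-$\geq 2$ estimate), then uses it to prove the finite-surjective case (Theorem~\ref{key result-finite surjective}), combined with a reduction scheme (Proposition~\ref{first reduction}) that handles the Zariski-closedness and the ``if and only if'' simultaneously, and an Ueno-type fibration argument (Corollary~\ref{corollary Ueno fibration}) to glue the closed-immersion case and the finite-surjective case together. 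Your Noetherian-stabilization trick via the schemes $V_n$ is a clean way to see closedness of the union of translated abelian subvarieties when $X \subset P$ is closed --- arguably slicker than McQuillan's bookkeeping --- but it does not touch the genuine difficulty, and the claimed proof of the theorem does not hold as stated.
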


\begin{thmintro}\label{main corollary}
Let $f\colon X \to Y$ be a smooth proper surjective morphism with connected fibres between complex algebraic varieties. Assume that there exists a geometric point $\bar \eta \colon \Spec \left( \Omega \right) \to Y$ lying over $y_o \in Y$ such that the fibre $X_{\bar \eta}$ is of general type and of maximal Albanese dimension.\\
Then, for every $\ast \in \{alg, ab , h\}$, $\Sp_{\ast}(X/Y)$ is not Zariski-dense in $X$, and for  every closed point $y \in Y$ in a Zariski neighborhood of $y_o$, $\Sp_{\ast}(X_y)$ is not Zariski-dense in $X_y$. 
\end{thmintro}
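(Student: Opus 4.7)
The plan is to reduce to Theorem~\ref{relative Lang conjecture in the abelian case} by relativising the Albanese morphism.

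After replacing $Y$ with the reduced closed subscheme $\overline{\{y_o\}}$ and $X$ with its preimage, I may and do assume that $Y$ is irreducible with generic point $y_o$. Shrinking $Y$ to a Zariski neighborhood of $y_o$, a standard spreading out produces a relative Albanese abelian scheme $A \to Y$, an $A$-torsor $P \to Y$, and a $Y$-morphism $\alpha \colon X \to P$ whose geometric generic fibre realises an Albanese morphism of $X_{\bar\eta}$. Since $X_{\bar\eta}$ has maximal Albanese dimension, $\alpha$ is generically finite, and its Stein factorisation $X \xrightarrow{\beta} X' \xrightarrow{\gamma} P$ has $\gamma$ finite and $\beta$ proper birational. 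In particular $X'_{\bar\eta}$ is birational to $X_{\bar\eta}$, so the proper morphism $X' \to Y$ is of general type and is equipped with the finite $Y$-morphism $\gamma$ to the $A$-torsor $P$.

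By Theorem~\ref{relative Lang conjecture in the abelian case} applied to $X' \to Y$, the subset $\Sp(X'/Y) = \Sp_{alg}(X'/Y) = \Sp_{ab}(X'/Y) = \Sp_h(X'/Y)$ is Zariski-closed and proper in $X'$, and the locus $V := \{y \in Y \mid \Sp(X'_y) \neq X'_y\}$ is Zariski open and non-empty in $Y$; since $Y$ is irreducible, $V$ is a Zariski neighborhood of the generic point $y_o$.

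Let $E \subset X$ be the exceptional locus of $\beta$, a proper closed subset. For each $* \in \{alg, ab, h\}$, I would verify fibrewise that any witness to $\Sp_*(X_y)$ --- a positive-dimensional irreducible subvariety of $X_y$ not of general type, the image of a non-constant rational map from an abelian variety into $X_y$, or a non-constant holomorphic map $\bC \to X_y$ --- either maps under $\beta$ to a witness of the same type in $X'_y$, or else is contracted by $\beta$ and therefore sits inside a positive-dimensional fibre of $\beta$, hence in $E$. For the algebraic case this uses that for a generically finite surjective morphism between positive-dimensional irreducible proper varieties, ``target of general type'' implies ``source of general type'' (pull back the canonical divisor on smooth models), so the contrapositive preserves ``not of general type''. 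One therefore obtains $\Sp_*(X/Y) \subset E \cup \beta^{-1}(\Sp(X'/Y))$, a proper Zariski-closed subset of $X$; hence $\Sp_*(X/Y)$ is not Zariski-dense in $X$. Intersecting $V$ with the Zariski-open locus $\{y \in Y : E_y \subsetneq X_y\}$ --- non-empty by irreducibility of the smooth connected fibres $X_y$ together with upper semicontinuity of fibre dimension applied to $E \to Y$ --- yields a Zariski neighborhood of $y_o$ over which the fibrewise inclusion $\Sp_*(X_y) \subset E_y \cup \beta_y^{-1}(\Sp(X'_y))$ gives $\Sp_*(X_y) \subsetneq X_y$.

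The main technical obstacle is the first step: constructing the relative Albanese data $(A, P, \alpha)$, together with the fact that $\alpha$ specialises fibrewise to the Albanese morphism, over a Zariski neighborhood of $y_o$. This is a well-established but delicate spreading-out ingredient. With it in hand, the remainder of the argument is a routine application of Theorem~\ref{relative Lang conjecture in the abelian case} combined with a fibrewise comparison of special subsets along the proper birational modification $\beta$.
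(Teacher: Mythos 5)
Your overall strategy coincides with the paper's: relativise the Albanese map, Stein-factorise to get a finite morphism to an Albanese torsor, apply Theorem~\ref{relative Lang conjecture in the abelian case}, and push the non-density back across the resulting proper birational modification. The explicit fibrewise comparison of witnesses along $\beta$, including the observation that a subvariety $W$ not contained in the exceptional locus $E$ maps \emph{birationally} onto $\beta(W)$, is correct and is essentially what the paper gets from Proposition~\ref{composition-special sets}. Your use of the Albanese \emph{torsor} $P$ rather than passing to an \'etale cover to get a section, as the paper does, is a harmless simplification, since Theorem~\ref{relative Lang conjecture in the abelian case} is formulated for torsors.

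However, your very first step is a genuine gap. You replace $Y$ by $\overline{\{y_o\}}$ and $X$ by its preimage so as to ``assume $Y$ is irreducible with generic point $y_o$.'' This is not a without-loss-of-generality reduction. If $y_o$ is not the generic point of $Y$ (for instance, a closed point, which is exactly the case of interest in the second assertion), then this replaces $X$ by the much smaller subset $X \times_Y \overline{\{y_o\}}$. Non-density of $\Sp_*$ in this smaller space neither implies non-density of $\Sp_*(X/Y)$ in the original $X$, nor gives the fibrewise statement over a Zariski neighbourhood of $y_o$ in the \emph{original} $Y$ (a Zariski-open of $\overline{\{y_o\}}$ is not a Zariski-open of $Y$). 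So the two conclusions you eventually obtain are statements about $\overline{\{y_o\}}$, not about $Y$.

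The correct way to ``promote'' $y_o$ to the generic point, used in the paper, is the generization principle: Theorem~\ref{generization general type} guarantees that if $X_{\bar\eta}$ is of general type for some geometric point $\bar\eta$, then the geometric generic fibre of $X \to Y$ is also of general type. For the maximal-Albanese property one argues directly (as the paper does): the max Albanese dimension of $X_{\bar\eta}$ forces the Stein-factorised Albanese $g\colon X \to Z$ to have a $0$-dimensional fibre over a point of $Y$, hence by properness and upper semicontinuity of fibre dimension $g$ is generically finite, hence birational; in particular the geometric generic fibre $X_{\bar\mu} \to Z_{\bar\mu}$ is birational, so $X_{\bar\mu}$ also has maximal Albanese dimension. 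With this replacement your argument goes through unchanged.
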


Recall that a smooth proper complex algebraic variety is said of maximal Albanese dimension if its Albanese morphism is generically finite, or equivalently if it admits a generically finite morphism to an abelian variety. \\

As a key step toward Theorem \ref{relative Lang conjecture in the abelian case}, we prove the following truncated second main theorem type estimate in Nevanlinna theory.

\begin{thmintro}\label{truncated SMT for abelian schemes}
Let $S$ be a complex algebraic variety. Let $A \to S$ be an abelian scheme. Let $X \to S$ be an $A$-torsor over $S$. Let $D \subset X$ be an effective reduced Cartier divisor. Let $L \to X$ be a line bundle which is relatively ample over $S$. For every $\epsilon >0$, there exists $\Xi = \Xi(S, A, X, D, L, \epsilon)  \subsetneq X$ a closed subscheme such that for every $s \in S$, if $f \colon \bC \to X_s$ is the orbit of a one-parameter subgroup of $A_s$ such that $f (\bC) \not \subset \Xi$, then $D_s$ is a reduced effective Cartier divisor and 
\[ T(r,f,D_s) \leq N^{(1)}(r,f, D_s) + \epsilon \cdot T(r,f,L_s) \; ||_{\epsilon} . \]
\end{thmintro}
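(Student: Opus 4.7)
The strategy is to reduce the relative estimate to a fibrewise application of the absolute truncated second main theorem for abelian varieties (due to Yamanoi and Noguchi--Winkelmann--Yamanoi), the main work being to control orbit closures and to assemble the fibrewise exceptional data into a single proper closed subscheme of $X$. I would first reduce to the case where the torsor $X \to S$ is trivial by pulling back along a finite \'etale cover of $S$; the resulting exceptional subscheme then descends to $X$. Let $S^\circ \subset S$ be the dense open locus over which $D$ restricts to a reduced effective Cartier divisor on each fibre (such a locus exists because $D$ is reduced and Cartier on $X$). Including $X \setminus X_{S^\circ}$ in $\Xi$ handles the clause ``$D_s$ is a reduced effective Cartier divisor'' in the conclusion; by Noetherian induction on $\dim S$, it then suffices to construct $\Xi$ over a dense open of $S$, so I may henceforth assume $S = S^\circ$.

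Next I would stratify $S$ according to orbit-closure type. The Zariski closure of any one-parameter subgroup orbit $f \colon \bC \to X_s$ is a translate $p + B_s$ of an abelian subvariety $B_s \subset A_s$. The relative Hilbert scheme parameterising abelian subschemes of $A \to S$ is a countable union of $S$-schemes of finite type, so Noetherian induction reduces the task to handling one abelian subscheme $B \subset A/S$ at a time. For each such $B$, let $\Xi_B \subset X$ be the locus of points $p$ for which either $p + B_s \subset D_s$, or the scheme-theoretic intersection $D_s \cap (p + B_s)$ is non-reduced in $p + B_s$. By upper semi-continuity of intersection multiplicity, $\Xi_B$ is closed, and because $D$ is reduced, a generic translate $p + B_s$ meets $D_s$ in a reduced Cartier divisor, so $\Xi_B$ is a \emph{proper} closed subscheme of $X$. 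The Zariski closure of $\bigcup_B \Xi_B$ is added to $\Xi$.

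For an orbit $f$ with $f(\bC) \not\subset \Xi$, the orbit closure $p + B_s$ meets $D_s$ in a proper reduced effective Cartier divisor $D'_s \subset p + B_s$. Identifying $p + B_s$ with $B_s$ via a translation turns $f$ into a Zariski-dense entire curve in the abelian variety $B_s$. I would then apply Yamanoi's truncated second main theorem for abelian varieties to this curve, with respect to $D'_s$ and the ample line bundle $L_s|_{p + B_s}$, to obtain
\[ T(r,f,D'_s) \leq N^{(1)}(r,f,D'_s) + \epsilon \cdot T(r,f,L_s|_{p + B_s}) \; ||_\epsilon. \]
Because $f$ factors through $p + B_s$, each Nevanlinna invariant above coincides with its counterpart computed in $X_s$, yielding the claimed inequality.

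The principal obstacle is the \emph{uniformity} of Yamanoi's theorem across $s \in S$: the exceptional set of radii hidden by ``$||_\epsilon$'', the implicit constants, and the auxiliary data from the proof (jet bundles, logarithmic Wronskians, algebraic addition formulae on the abelian variety) a priori depend on the specific fibre $B_s$ and divisor $D'_s$. To produce a single $\Xi \subsetneq X$ valid for all $s$ simultaneously, I would rerun Yamanoi's proof in the relative setting over each stratum of $S$, using relative jet bundles $J_k(A/S)$ and the $A/S$-equivariant structure to obtain algebraic, family-wide versions of the logarithmic derivative lemma. Finiteness of the stratification then collapses this to a uniform estimate, and the exceptional divisor arising from the relative logarithmic Wronskian descends to a proper closed subscheme of $X$ that is absorbed into $\Xi$. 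Establishing this uniformity rigorously is the technical heart of the theorem.
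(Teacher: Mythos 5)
Your proposal identifies the right skeletal moves (étale trivialization of the torsor, the Bloch--Ochiai fact that orbit closures are translates of abelian subvarieties, the need to discard orbits contained in bad translates, the importance of relative jet spaces) but it does not actually prove the theorem, and it also diverges substantially from the strategy the paper takes.

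The fundamental gap is one you flag yourself at the end: ``Establishing this uniformity rigorously is the technical heart of the theorem.'' Because the orbit closure $p + B_s$ varies with both $s$ and the chosen one-parameter subgroup, applying Yamanoi's \emph{absolute} truncated SMT fibrewise gives estimates whose exceptional radii sets and error terms depend on $(s, B_s, D'_s)$ in an uncontrolled way, and there is no mechanism in your write-up that converts this collection of fibrewise statements into a single uniform estimate over $S$. Saying that one should ``rerun Yamanoi's proof in the relative setting'' is the theorem, not a proof of it. There are also more local difficulties: (i) the stratification of $S$ ``according to orbit-closure type'' is not a stratification of $S$, since for a fixed $s$ different one-parameter subgroups have different closures; (ii) the abelian subschemes of $A/S$ are parameterised by a countable union of finite-type schemes, so $\bigcup_B \Xi_B$ is a priori a countable union of proper closed subsets, which need not be contained in any proper closed $\Xi$; (iii) the claim that the non-reducedness locus defining $\Xi_B$ is closed requires justification (it is constructible, and open in good flat families, but you use closedness).

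The paper's route is genuinely different. It does not try to reduce to the absolute theorem at all. Instead, it runs an induction on the relative dimension of $X/S$, and the engine of the induction is the relative Ueno fibration (Theorem~\ref{relative Ueno fibration} and Corollary~\ref{corollary Ueno fibration}) combined with a finiteness theorem (Theorem~\ref{A finiteness result}) and descent of abelian subschemes (Theorem~\ref{descending abelian subschemes}): every orbit whose image is ``bad'' (contained up to translation in an auxiliary divisor) is shown to lie in an orbit of a \emph{proper} abelian subscheme over a quasi-finite cover, and the estimate for such orbits follows by the inductive hypothesis, since passing to the quotient by $B$ drops the relative dimension. This is Proposition~\ref{key induction step}, and it replaces your stratification-by-orbit-closure step with something that actually terminates. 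For the remaining orbits, the paper first proves an intermediate SMT with truncation level $\rho$ (Theorem~\ref{SMT for abelian schemes}), where the key uniformity is achieved by a Noetherianity argument on jet spaces $J_k(X/S)$ using the canonical sections $\sigma_k\colon \Lie(A/S)\to\Lie^k(A/S)$ (Proposition~\ref{Proposition-uniformity-jets}); then reduces from $N^{(\rho)}$ to $N^{(1)}$ by the codimension-$\geq 2$ estimate of Theorem~\ref{codim 2}, applied to $J_1(D/S)\subset J_1(X/S)$, which has fibrewise codimension at least $2$ once one has reduced to $D_s$ integral and of general type. That reduction itself is carried out via the Ueno fibration and the inductive hypothesis. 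So the proof is self-contained and family-uniform by design, rather than a patching-together of pointwise applications of the absolute theorem.
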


Here the symbol $||_{\epsilon}$ means that the stated estimate holds for $r > 0$ outside some exceptional Borel subset with finite Lebesgue measure, which depends on $\epsilon$. The definitions of Nevanlinna's characteristic and truncated counting functions $T$ and $N^{(1)}$ are recalled in section \ref{Notations Nevanlinna}.

\subsection{Earlier results.}

When the finite $S$-morphism $X \to P$ is a closed immersion, Theorem \ref{relative Lang conjecture in the abelian case} is a direct consequence of works of Bloch \cite{Bloch}, Ueno \cite{Ueno-book}, Ochiai \cite{Ochiai}, Kawamata \cite{Kawamata-characterization, Kawamata-Bloch}, Abramovich \cite{Abramovich} and McQuillan \cite{McQuillan}.
When $S$ is a point and the finite $S$-morphism $X \to P$ is surjective, the points $(1)$ and $(3)$ of Theorem \ref{relative Lang conjecture in the abelian case} were proved to be true in a fundamental work of Yamanoi \cite{Yamanoi-maximal-Albanese}. This case is substantially more difficult than the case where $X \to P$ is a closed immersion. It seems that the point $(2)$ is new already in this special case. However, our main contribution in this paper is the extension of Yamanoi's work to the relative setting. When $S$ is a point, Yamanoi has proved that Theorem \ref{truncated SMT for abelian schemes} holds more generally for every non-constant holomorphic map $f \colon \bC \to X$. Our proof of Theorem \ref{truncated SMT for abelian schemes} is very much inspired by Yamanoi works. However, the relative setting introduces some significant new difficulties, even when one restrict ourselves to translates of one-parameter subgroups. 

\subsection{Acknowledgements.}
I warmly thank P. Autissier, F. Campana and M. Maculan for interesting discussions.
 
%%%%%%%%%%%%%%%%%%%%%%%%%%%%%%%%%%%%%%%%%%%%%%%%%%

\subsection{Conventions} 
We recall some definitions that we will use freely in the sequel.
\begin{itemize}
\item An algebraic variety over a field $k$ is a reduced separated scheme of finite type over $k$.
\item A geometric point of a $k$-variety $X$ is a $k$-morphism $\bar x \colon \Spec(\Omega) \to X$, where $\Omega \supset k$ is an algebraically closed field. If $x \in |X|$ is the image of $\bar x$, we say that $\bar x$ is a geometric point lying over $x$.
\item A morphism $X \to Y$ of schemes is called dominant if its image is a dense subset of $Y$. 
\item Let $X \to Y$ be a dominant morphism between two $k$-algebraic varieties, with $Y$ integral. A geometric generic fibre of $X \to Y$ is a geometric fibre $X_{\bar\eta} = X \times_Y \Spec(\Omega)$ associated to a geometric point $\bar \eta \colon \Spec(\Omega) \to Y$ lying over the generic point $\eta$ of $Y$.
\item If $X \to S$ and $T \to S$ are two morphisms of schemes, then we will denote by $X_T \to T$ the base-change of $X \to S$ along $T \to S$.
\end{itemize}
%%%%%%%%%%%%%%%%%%%%%%%%%%%%%%%%%%%%%%%%%%%%%%%%%%%%%%%%%

%%%%%%%%%%%%%%%%%%%%%%%%%%%%%%%%%%%%%%%
\section{Preliminaries}

\subsection{Fibrations}
Throughout this section $k$ is a field of characteristic zero. 

\begin{defn}
A fibration $f \colon X \to Y$  is a proper dominant morphism of $k$-algebraic varieties such that $f_\ast(\cO_X ) = \cO_Y$.
\end{defn}
In particular, a fibration is a surjective morphism with connected fibres. Conversely, if $X$ is normal, then a surjective proper morphism with connected fibres is a fibration. The composition of two fibrations is a fibration. Thanks to the Stein factorization, any proper morphism $f$ can be decomposed as $f = g \circ h$ with $g$ finite and $h$ a fibration.

\begin{prop}\label{fibration-normal and integral}
Let $f \colon X \to Y$ be a fibration between $k$-algebraic varieties. If $X$ is integral (respectively normal), then $Y$ is integral (respectively normal).
\end{prop}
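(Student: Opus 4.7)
The plan is to handle the two implications separately and then reduce the normal case to the integral case.

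For the integral case, everything follows from general topology and the variety convention. Since $f$ is a fibration, it is surjective, so $Y = f(X)$ is the continuous image of the irreducible space $X$, hence irreducible. The scheme $Y$ is reduced by the standing convention that $k$-varieties are reduced. Thus $Y$ is integral.

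For the normal case, first I would reduce to $X$ integral. A normal Noetherian scheme is locally a finite disjoint union of its connected components, each of which is integral and normal, so $X = \bigsqcup_{i=1}^n X_i$ with each $X_i$ integral normal. The key observation is that the images $Y_i := f(X_i)$ are pairwise disjoint: if $y \in Y_i \cap Y_j$ with $i \ne j$, then $f^{-1}(y)$ would be the union of the disjoint non-empty closed subsets $f^{-1}(y) \cap X_i$ and $f^{-1}(y) \cap X_j$, contradicting the connectedness of the fibre (which follows from $f_*\mathcal{O}_X = \mathcal{O}_Y$ plus properness). Each $Y_i$ is closed (by properness of $f$), so the $Y_i$'s are simultaneously open and closed and thus partition $Y$ into the connected components of $Y$. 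The restriction $f|_{X_i}\colon X_i \to Y_i$ remains a fibration (because $f_*\mathcal{O}_X = \mathcal{O}_Y$ localizes to each open-closed piece), so I may assume $X$ is integral normal, and then by the integral case $Y$ is integral as well.

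Now with $X$ and $Y$ integral, $X$ normal, let $\nu\colon \tilde{Y} \to Y$ denote the normalization. The universal property of the normalization applied to the dominant morphism $f\colon X \to Y$ from the normal integral scheme $X$ produces a factorization $f = \nu \circ g$ with $g\colon X \to \tilde{Y}$ proper and dominant. The sheaf maps fit into a commutative diagram
\[ \mathcal{O}_Y \hookrightarrow \nu_*\mathcal{O}_{\tilde Y} \hookrightarrow \nu_*g_*\mathcal{O}_X = f_*\mathcal{O}_X, \]
where the first arrow is injective because $\nu$ is birational (and so generically an isomorphism, with $Y$ reduced) and the second arrow is injective because $g$ is dominant and $\tilde Y$ is reduced. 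The composition is the canonical map $\mathcal{O}_Y \to f_*\mathcal{O}_X$, which is an isomorphism by the fibration hypothesis. Therefore $\mathcal{O}_Y \to \nu_*\mathcal{O}_{\tilde Y}$ is an isomorphism of $\mathcal{O}_Y$-algebras, and since $\nu$ is affine this forces $\nu$ to be an isomorphism. Hence $Y \cong \tilde{Y}$ is normal.

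The step requiring the most care is the reduction to the integral case in the normal setting, because one has to be sure that the decomposition of $X$ into connected components induces a decomposition of $Y$ compatible with the fibration structure; this relies essentially on the connectedness of the fibres of $f$, which is the geometric content of the condition $f_*\mathcal{O}_X = \mathcal{O}_Y$. Once this reduction is made, the argument through the normalization is formal.
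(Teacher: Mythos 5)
Your proof is correct; the paper states this proposition without proof, so there is nothing to compare against. Each step checks out: the integral case follows from surjectivity (a fibration is proper and dominant, hence surjective) and reducedness of varieties; the decomposition of a normal Noetherian scheme into its integral connected components induces the matching decomposition of $Y$ because the fibres of a fibration are connected and $f$ is proper; and the final factorization through the normalization, combined with left-exactness of $\nu_*$ and affineness of $\nu$, forces $\nu$ to be an isomorphism.

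One small remark: once you have reduced to $X$, $Y$ integral with $X$ normal and $f_*\mathcal{O}_X = \mathcal{O}_Y$, you can bypass the normalization entirely. For any affine open $V \subset Y$ one has $\mathcal{O}_Y(V) = \mathcal{O}_X(f^{-1}V) = \bigcap_{x \in f^{-1}V} \mathcal{O}_{X,x}$ inside $K(X)$; this is an intersection of normal domains sharing the fraction field $K(X)$, hence is integrally closed in $K(X)$, and therefore in the subfield $K(Y)$. So $\mathcal{O}_Y(V)$ is a normal domain directly. This is marginally more elementary than invoking the universal property of the normalization and the argument that $\nu$ is affine, but your route is equally valid and arguably cleaner to state.
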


\begin{prop}\label{fibration-generic fibre}
Let $f \colon X \to Y$ be a fibration between normal irreducible $k$-algebraic varieties. Then, the generic fibre of $f$ is geometrically irreducible.
\end{prop}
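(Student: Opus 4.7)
The plan is to reduce geometric irreducibility of the generic fibre $X_\eta$ over $K := k(Y)$ to the statement that $K$ is algebraically closed in the function field $K(X_\eta)$, which in characteristic zero is equivalent to geometric integrality.

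First, I would compute $\HH^0(X_\eta, \cO_{X_\eta})$. Since the inclusion $\Spec(K) \hookrightarrow Y$ of the generic point is flat (it is a localization), flat base change applied to the proper morphism $f$ gives
\[ \HH^0(X_\eta, \cO_{X_\eta}) = (f_\ast \cO_X)_\eta = (\cO_Y)_\eta = K, \]
using the hypothesis $f_\ast \cO_X = \cO_Y$.

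Next I would use this to show that $X_\eta$ is integral. Normality is preserved under localization, so $X_\eta$ is normal (as $X$ is). For a proper scheme over a field, the global sections of the structure sheaf decompose as a finite product of Artin local $K$-algebras indexed by connected components; since the above shows this product is the field $K$, $X_\eta$ is connected. A normal connected scheme is irreducible, and locally noetherian normal means reduced, so $X_\eta$ is integral.

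Now the key step: for a proper normal integral scheme $Z$ over a field $K$, the ring $\HH^0(Z, \cO_Z)$ is a finite $K$-algebra which is a domain (by connectedness), hence a finite field extension of $K$; moreover this field is nothing but the integral closure of $K$ inside the function field $K(Z)$, i.e.\ the algebraic closure of $K$ in $K(Z)$. Applying this to $Z = X_\eta$ together with Step~1, we conclude that $K$ is algebraically closed in $K(X_\eta)$. Since we are in characteristic zero, $K(X_\eta)/K$ is automatically separable, so this extension is regular, equivalently $X_\eta \times_K \overline{K}$ is integral, and in particular irreducible.

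The only subtle step is the identification $\HH^0(X_\eta, \cO_{X_\eta}) = $ (algebraic closure of $K$ in $K(X_\eta)$), which uses both properness (to guarantee finiteness over $K$) and normality (to guarantee that every element algebraic over $K$ lies in $\HH^0$); this is the main point to state carefully but presents no real obstacle since it is standard.
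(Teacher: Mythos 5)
Your proof is correct. It takes a somewhat different route from the paper: the paper's proof is a one-liner observing that the generic fibre is geometrically connected (since $f_\ast \cO_X = \cO_Y$ is stable under the flat base change to $\Spec K$) and geometrically normal (since $X$ is normal and we are in characteristic zero, where normal is equivalent to geometrically normal), and then uses the fact that a geometrically connected, geometrically normal proper $K$-scheme is geometrically irreducible. You instead compute $\HH^0(X_\eta,\cO_{X_\eta}) = K$ explicitly, identify this ring with the algebraic closure of $K$ inside $K(X_\eta)$ using normality and properness, and then invoke the characterization of geometrically integral varieties in terms of regular function field extensions, with characteristic zero supplying separability. Both arguments rely on the same three inputs --- $f_\ast \cO_X = \cO_Y$, normality of $X$, and characteristic zero --- but the paper packages them through the ``geometrically normal'' formalism while you unpack the cohomology and go through the field-of-constants criterion. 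Your version is more self-contained and makes the role of each hypothesis explicit; the paper's version is shorter by appealing directly to the equivalence ``normal $\Leftrightarrow$ geometrically normal'' in characteristic zero.
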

\begin{proof}
The generic fibre is both geometrically connected and geometrically normal, hence it is geometrically irreducible.
\end{proof}

%%%%%%%%%%%%%%%%%%%%%%%%%%%%%%%%
\subsection{Torsors} 
%\yo{Some references include  Néron models, Brion  [SGA3, VIA.3.2].} 
\begin{defn}
Let $S$ be a scheme, $X$ a $S$-scheme and $G$ a $S$-group scheme that acts on $X$ by means of a morphism $ G \times_S X \to X,  (g,x) \mapsto g \cdot x$.
Then $X$ is a $G$-torsor over $S$ if 
\begin{enumerate}
\item the structural morphism $X \to S$ is faithfully flat and quasi-compact, and
\item the morphism $G \times_S X \to X \times_S X$, $(g,x) \mapsto (g \cdot x, x)$, is an isomorphism.
\end{enumerate}
\end{defn}
If $G$ is smooth, then $X$ can be trivialized by a surjective étale cover $S^\prime \to S$.

\begin{prop}
Let $A$ be an abelian scheme over a locally noetherian basis $S$. Let $B$ be an abelian subscheme of $A$. Let $X$ be an $A$-torsor over $S$. Then the quotient $X \slash B$ exists as an $S$-scheme and the quotient morphism $X \to X \slash B$ is a $B_{X \slash B}$-torsor.
\end{prop}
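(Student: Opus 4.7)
The plan is to combine étale-local triviality of $X$ as an $A$-torsor with the classical existence of the quotient abelian scheme $A/B$. First, I would recall the standard fact that the quotient $A/B$ exists as an abelian scheme over $S$, and that the projection $\pi\colon A \to A/B$ is a $B_{A/B}$-torsor for the translation action of $B$ on $A$; this is a classical result in the theory of abelian schemes (via Raynaud or SGA~3).

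Since $A \to S$ is smooth, the $A$-torsor $X \to S$ is étale-locally trivial: there exist an étale surjective cover $u\colon S' \to S$ and an isomorphism of $A_{S'}$-torsors $\varphi\colon X_{S'} \xrightarrow{\sim} A_{S'}$. Writing $S'' := S' \times_S S'$ with its two projections $p_1, p_2$, the comparison $p_2^\ast\varphi \circ (p_1^\ast\varphi)^{-1}$ is translation by a unique section $\sigma \in A(S'')$ satisfying the usual cocycle condition on $S' \times_S S' \times_S S'$. The image $\bar\sigma \in (A/B)(S'')$ then defines a descent datum on $(A/B)_{S'}$ along $u$. Because abelian schemes over a locally noetherian base are Zariski-locally projective (Raynaud), this descent datum is effective and produces an $S$-scheme $Z$ that naturally inherits the structure of an $(A/B)$-torsor over $S$; I would set $X/B := Z$. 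Equivalently, one can describe $X/B$ as the fppf-sheafification of the presheaf $T \mapsto X(T)/B(T)$, whose representability is then a consequence of it being an $(A/B)$-torsor.

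To finish, I would verify that the canonical morphism $X \to X/B$ is a $B_{X/B}$-torsor. By construction, pulling back along $u$ identifies this morphism with the projection $A_{S'} \to (A/B)_{S'}$, which is a $B_{(A/B)_{S'}}$-torsor by the first step; since being a torsor under a flat group scheme is local in the fpqc topology on the base, the conclusion descends to $S$. The main obstacle is the effectivity of the descent datum in the previous step, since descent of schemes along fppf covers is not automatic for non-quasi-affine targets; this is circumvented either by invoking the Zariski-local projectivity of abelian schemes over a locally noetherian base, or by appealing directly to the representability of torsors under abelian schemes.
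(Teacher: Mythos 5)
The paper records this proposition without proof, so there is no argument of the author's to compare against; let me assess your sketch on its own terms.

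Your overall strategy is the natural one: trivialize the $A$-torsor étale-locally, transport the resulting cocycle from $A$ to $A/B$, and descend. The cocycle bookkeeping is correct. The issue is the effectivity step, which you rightly single out as the main obstacle — but the two fixes you propose do not work.

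First, the cited projectivity theorem is false at the generality you need. Raynaud's theorem (LNM~119, Ch.~XI) that an abelian scheme is Zariski-locally projective over the base requires the base to be \emph{normal} (and locally noetherian); in Ch.~XIII of the same volume Raynaud constructs an abelian scheme over a non-normal noetherian base that is not projective. Since the proposition is stated over an arbitrary locally noetherian base, you cannot invoke Zariski-local projectivity as a blanket fact.

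Second, even where $(A/B)_{S'}$ does carry a relatively ample line bundle $M$, your descent datum is translation by $\bar\sigma\in(A/B)(S'')$, and translation on an abelian scheme does \emph{not} preserve $M$: it twists $M$ by a section of $\operatorname{Pic}^0$. Thus $M$ does not automatically carry a compatible descent datum, so the standard effectivity criterion (descend a scheme together with a descent-equivariant ample bundle) does not apply as stated. One needs more input — a symmetric or cubical rigidification, or a theorem on representability of $\operatorname{Pic}^0$ and torsors — and your fallback, \enquote{appeal directly to the representability of torsors under abelian schemes}, is circular, as that is precisely the content of the statement being proved (the fppf quotient $X/B$ is automatically an algebraic space; the point at issue is that it is a \emph{scheme}). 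A correct proof should either restrict to normal bases and carry out the ample-descent argument carefully, or cite the relevant representability results of Raynaud/Anantharaman/Faltings--Chai explicitly for the quotient in question.
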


%%%%%%%%%%%%%%%%%%%%%%%%%%%%%%%%%%%%%%%%%
\subsection{Jet spaces}

For every non-negative integer $m$, we set $\Delta_m := \Spec \left( \bC[t]/(t^{m+1}) \right)$.
\begin{defn}[cf. {\cite{Vojta-jets}}]
Let $X \to Y$ be a morphism of complex schemes. Let $m \geq 0$ be an integer. The functor from $Y$-schemes to sets, given by 
\( Z \mapsto  \Hom_Y \left(Z \times_{\bC} \Delta_m , X \right),  \)
is represented by a $Y$-scheme $J_m(X/Y)$, which is called the scheme of $m$-jet differentials
of $X$ over $Y$.
\end{defn}

We collect some easy consequences of the definition, cf. \cite{Vojta-jets}.

\begin{prop}\label{jets_properties}
Let $X \to Y$ be a morphism of complex schemes.
\begin{enumerate}
\item There are natural 'forget maps' :
\[   J_{m+1}(X/Y) \to J_m(X/Y)   \to \cdots \to J_1(X/Y) \to J_0(X/Y) = X  . \]
For all $m$, $J_m(X/Y)$ is affine (and therefore quasi-compact and separated) over $X$.

\item For every $m$, there is a “zero section” $s_m \colon X \to J_m(X/Y)$
satisfying $\pi_m \circ s_m = \mathrm{Id}_X$ for all $m$ and $\pi_{ji} \circ s_j = s_i$ for all $0 \leq i \leq j$.

\item For every $m$, the formation of $J_m(X/Y )$ is functorial in pairs $X \to Y$, i.e. a commutative diagram 
\[
\begin{tikzcd}
X \arrow[d] \arrow[r, "f"] & X^\prime \arrow[d] \\
Y \arrow[r]& Y^\prime
\end{tikzcd}
\]

induces a commutative diagram

\[
\begin{tikzcd}
J_m(X/Y) \arrow[d] \arrow[r, "J_m(f)"] & J_m(X^\prime/Y^\prime) \arrow[d] \\
X \arrow[r]& X^\prime.
\end{tikzcd}
\]
If the first diagram is cartesian, then the second diagram is cartesian. If $f$ is a closed immersion, then $J_m(f)$ is a closed immersion.

    \item For every morphism of schemes $X_1 \to Y$ and $X_2 \to Y$ and for every $m$, there is a canonical isomorphism $J_m(X_1 \times_Y X_2 / Y ) \simeq J_m(X_1 /Y) \times_Y J_m(X_2 / Y )$ of $Y$-schemes.
   
    \item If $G$ is a $S$-group scheme and $m \geq 0$ an integer, then $J_m(G/S)$ is also a $S$-group scheme. If $G$ acts on a $S$-scheme $X$, then the $S$-group scheme $J_m(G/S)$ acts on the $S$-scheme $J_m(X/S)$. If moreover $X$ is a $G$-torsor, then  $J_m(X/S)$ is a $J_m(G/S)$-torsor.
    
    \item If $f \colon X \to Y$ is a smooth morphism, then $\pi_{ji} \colon J_j (X/Y) \to J_i(X/Y )$ is surjective for every integers $0 \leq i \leq j$.
\end{enumerate}

\end{prop}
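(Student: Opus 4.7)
The plan is to derive each item directly from the functor-of-points description, descending to a local affine computation only for representability, affineness, and the closed-immersion statement.

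The forget maps in (1) come from the closed immersions $\Delta_m \hookrightarrow \Delta_{m+1}$ corresponding to $\bC[t]/(t^{m+2}) \twoheadrightarrow \bC[t]/(t^{m+1})$: pre-composition sends an $(m+1)$-jet to its $m$-truncation. For affineness of $J_m(X/Y) \to X$, I would work locally: writing $Y = \Spec B$ and $X = \Spec B[x_1,\dots,x_n]/(f_\alpha)$, the substitution $x_i \mapsto \sum_{k=0}^m x_i^{(k)} t^k$ expands each $f_\alpha$ modulo $t^{m+1}$, and the coefficients of $t^0,\dots,t^m$ cut $J_m(X/Y)$ out of $\bA^{n(m+1)}_X$ as a closed subscheme affine over $X$. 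The same description yields the closed-immersion assertion in (3): a surjection $A \twoheadrightarrow A/I$ translates, after Taylor expansion, into additional equations on jet coordinates.

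For (2), the zero section is by Yoneda the natural transformation sending a $Z$-point $Z \to X$ to the composition $Z \times_\bC \Delta_m \to Z \to X$ (first projection followed by the given map), and the compatibilities $\pi_m \circ s_m = \mathrm{Id}_X$ and $\pi_{ji} \circ s_j = s_i$ are then immediate. Functoriality in (3) is post-composition with $f$, and the Cartesian property follows from the identity $\Hom_Y(Z \times_\bC \Delta_m,\, X' \times_{Y'} Y) = \Hom_{Y'}(Z \times_\bC \Delta_m,\, X') \times_{\Hom(Z,\, Y')} \Hom(Z,\, Y)$. The product formula (4) is the same observation applied to a fibre product: a morphism into $X_1 \times_Y X_2$ is a pair of compatible morphisms.

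For (5), the group structure on $J_m(G/S)$, the action on $J_m(X/S)$, and the isomorphism $J_m(G/S) \times_S J_m(X/S) \simeq J_m(X/S) \times_S J_m(X/S)$ are obtained by applying $J_m$ to the defining diagrams of $G$ and invoking (3)--(4). The substantive point for the torsor claim is faithful flatness plus quasi-compactness of $J_m(X/S) \to S$. Quasi-compactness follows from (1), and surjectivity from composing the (torsor) surjection $X \to S$ with the zero section $X \to J_m(X/S)$. For flatness I would base-change along the ffqc cover $X \to S$: by (3), $J_m(X/S) \times_S X \simeq J_m(X_X/X) \simeq J_m(G_X/X)$, and in the cases of interest (e.g. $G$ an abelian scheme) $G$ is smooth, so $J_m(G_X/X) \to X$ is smooth and in particular flat; flatness of $J_m(X/S) \to S$ then descends. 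Finally (6) reflects formal smoothness: $\Delta_i \hookrightarrow \Delta_j$ is a nilpotent thickening, so étale-locally every $i$-jet extends to a $j$-jet when $f$ is smooth; more structurally, $\pi_{ji}$ is itself smooth (an iterated torsor under vector bundles built from the relative tangent sheaf of $f$), hence surjective. The only step that is not pure bookkeeping is the flatness assertion in (5), which is why I would isolate it by descending along the torsor cover.
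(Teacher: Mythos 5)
The paper gives no proof of this proposition at all---it is presented as a list of ``easy consequences of the definition'' with a pointer to Vojta's article on jet spaces---so there is no proof to compare against; your functor-of-points argument is the standard one and fills the gap left in the text. Items (1)--(4) and (6) are routine and you handle them correctly (the local Taylor-expansion description for affineness and the closed-immersion clause, post-composition for functoriality, the iterated-affine-bundle structure for (6)). The one place where real care is needed is the torsor claim in (5), and you identified it correctly: what requires an argument is faithful flatness of $J_m(X/S) \to S$, and you fall back on ``in the cases of interest $G$ is smooth.'' This caveat is in fact unnecessary in the paper's setting, so your proof is complete as applied: since $X \to S$ is fppf and the torsor isomorphism gives $G \times_S X \cong X \times_S X$, the morphism $G_X \to X$ is fppf, hence $G \to S$ is fppf by descent; over a $\bC$-base a flat group scheme of finite type has smooth fibres by Cartier's theorem and is therefore smooth. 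With that observation supplied, your fppf-descent argument ($J_m(X/S)\times_S X \cong J_m(G_X/X)$ smooth over $X$) closes (5) unconditionally under the paper's hypotheses, rather than only ``in the cases of interest.''
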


\begin{defn}
Let $S$ be a scheme and $G$ a $S$-group scheme. For every positive integer $k$, we define the $S$-group scheme $\Lie^k(G /S)$ by the following exact sequence of $S$-group schemes:
\[ 1 \to \Lie^k(G /S) \to J_k(G / S) \to G \to 1 . \]
By convention, $\Lie(G /S) := \Lie^1(G /S)$. 
\end{defn}

In particular, for every positive integers $k \leq l$, the morphism of $S$-group schemes  $J_l(G / S) \to J_k(G / S)$ induces a morphism of $S$-group schemes $\Lie^l(G /S) \to \Lie^k(G /S)$. \\

The morphism of $S$-group schemes $J_k(G / S) \to G$ has a canonical section (the ``zero section'' from Proposition \ref{jets_properties} (2)). Therefore, when $G$ is commutative, there is a canonical splitting as $S$-group schemes:
\[ J_k(G / S) = G \times_S \Lie^k(G /S)  . \]

By working fppf-locally, we immediately infer the following result.

\begin{prop}
Let $S$ be a scheme, $G$ a commutative $S$-group scheme and $X$ a $G$-torsor. Then, for every positive integer $k$, there is a cartesian square
\[
\begin{tikzcd}
J_k(X / S) \arrow[d] \arrow[r] & \Lie^k(G/S) \arrow[d] \\
X \arrow[r]& S.
\end{tikzcd}
\]
\end{prop}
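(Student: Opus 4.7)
The plan is to construct a natural morphism
\[ \Phi \colon X \times_S \Lie^k(G/S) \to J_k(X/S) \]
and check that it is an isomorphism by working fppf-locally on $S$. Then the top horizontal map $J_k(X/S) \to \Lie^k(G/S)$ of the desired cartesian square is obtained by composing $\Phi^{-1}$ with the projection to the second factor, while the vertical map $J_k(X/S) \to X$ is the usual forget map from Proposition \ref{jets_properties}(1).

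To define $\Phi$, I would combine the zero section $s_k \colon X \to J_k(X/S)$ from Proposition \ref{jets_properties}(2) with the action of $J_k(G/S)$ on $J_k(X/S)$ from Proposition \ref{jets_properties}(5), restricted along the closed subgroup scheme $\Lie^k(G/S) \subset J_k(G/S)$; concretely, $\Phi(x,v) := v \cdot s_k(x)$. Since $\Lie^k(G/S)$ is by definition the kernel of $J_k(G/S) \to G$, its action on $J_k(X/S)$ preserves the forget map to $X$ (on points: if $\psi \colon \Delta_k \to G$ satisfies $\psi(0) = e$ and $\phi \colon \Delta_k \to X$ satisfies $\phi(0) = x$, then $(\psi \cdot \phi)(0) = e \cdot x = x$), so $\Phi$ is compatible with the projections to $X$.

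To verify that $\Phi$ is an isomorphism, I would choose an fppf cover $T \to S$ over which the torsor $X_T$ trivializes, so that $X_T \simeq G_T$ as $G_T$-torsors. By the compatibility of $J_k(-/-)$ with cartesian base change recorded in Proposition \ref{jets_properties}(3), the construction of $\Phi$ pulls back to the analogous construction over $T$, so it suffices to treat the case $X = G$. In that case the canonical splitting $J_k(G/S) = G \times_S \Lie^k(G/S)$ recalled just above (which holds precisely because $G$ is commutative) identifies $s_k$ with $g \mapsto (g,0)$ and makes the $\Lie^k(G/S)$-action translation in the second factor, so $\Phi$ is literally the identity map. Faithfully flat descent of isomorphisms then yields the statement in general. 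The argument is entirely formal; the only point requiring a brief verification is the compatibility of the restricted $\Lie^k(G/S)$-action with the forget map, and this is the sole ``obstacle''.
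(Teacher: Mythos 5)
Your proof is correct and takes the same approach the paper implicitly does: the paper offers no argument beyond the one-line remark ``By working fppf-locally, we immediately infer the following result,'' relying on the canonical splitting $J_k(G/S) = G \times_S \Lie^k(G/S)$ established just before. You have simply made the fppf-local reduction and the identification in the trivial case explicit, which is exactly what that remark is asking the reader to do.
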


\subsection{Canonical sections of the forget maps}\label{Canonical sections of the forget maps}
Let $S$ be a complex scheme and $A$ an $S$-abelian scheme.  By composing the exponential morphism $\Lie(A /S)^{an} \to A^{an}$ with the scalar multiplication $\bA^1_S \times_S \Lie(A /S) \to \Lie(A /S)$ for the vector bundle structure on $\Lie(A /S)$, one obtains an analytic map 
\[  \left(\bA^1_S \right)^{an} \times_{S^{an}} \left(\Lie(A /S)\right)^{an} \to A^{an} . \]
For every positive integer $k$, by precomposing with the analytification of the closed immersion $(\Delta_k)_{S} \hookrightarrow \bA^1_S $, one obtains an analytic map 
\[  (\Delta_k)_{S^{an}} \times_{S^{an}} \left(\Lie(A /S) \right)^{an} \to A^{an} . \]
By the universal property of the (analytic version of the) jet spaces, this provides us with a compatible system of holomorphic maps 
\[ s_k \colon \Lie(A /S)^{an} \to \Lie^k(A /S)^{an} \subset J_k(A /S)^{an}.  \]
It follows easily from the construction that $s_k \colon \Lie(A /S)^{an} \to \Lie^k(A /S)^{an}$ is a morphism of analytic vector bundles, whose composition with the canonical `forget map' $ \Lie^k(A /S)^{an} \to \Lie(A /S)^{an} $ equals the identity of $\Lie(A /S)^{an} $. Finally, it follows from GAGA that $\sigma_k$ is induced by a morphism of the underlying algebraic vector bundles $\sigma_k \colon \Lie(A /S) \to \Lie^k(A /S) $. In the sequel, we will refer to the $s_k$'s and $\sigma_k$'s as the \textit{canonical sections} of the forget maps.

%%%%%%%%%%%%%%%%%%%%%%%%%%%%%%%%%%%%%%%%%%%%%%%%%%%%%%%%%%%%%%%%%%%%%%%%%%%%%
\subsection{Relative Cartier divisors}
\begin{defn}
Let $f\colon X \to S$ be a morphism of schemes. A relative effective Cartier divisor on $X/S$ is an effective Cartier divisor $D \subset X$ such that $D \to S$ is a flat morphism of schemes. 
\end{defn}

The base-change of a relative effective Cartier divisor is still a relative effective Cartier divisor, see \cite[\href{https://stacks.math.columbia.edu/tag/056P}{Tag 056P}]{stacks-project}.

%%%%%%%%%%%%%%%%%%%%%%%%%%%%%%%%
\section{Generalities on proper morphisms of general type}\label{Generalities on proper morphisms of general type}

Throughout this section $k$ is an algebraically closed field of characteristic zero.

\begin{prop}\label{base-field extension general type}
 Let $L \supset K$ be an extension of algebraically closed fields of characteristic zero. Let $X$ be a proper $K$-variety. Then the $K$-variety $X$ is of general type if and only if the $L$-variety $X_L$ is of general type.
\end{prop}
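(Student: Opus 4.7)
The plan is to reduce to the case of a smooth proper irreducible $K$-variety, where the statement follows from flat base change applied to the pluricanonical cohomology.

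First I would handle the reductions. By definition, a proper $k$-variety is of general type if and only if at least one of its irreducible components is. Since $K$ is algebraically closed of characteristic zero, every irreducible $K$-variety is geometrically integral, so for each irreducible component $X_i$ of $X$ the base change $(X_i)_L$ is irreducible, and the decomposition $X = \bigcup_i X_i$ induces $X_L = \bigcup_i (X_i)_L$. Thus the irreducible components of $X_L$ are exactly the $(X_i)_L$, and I may assume $X$ is irreducible. The zero-dimensional case is trivial by convention, so I further assume $\dim X > 0$. Using resolution of singularities in characteristic zero, I would pick a proper birational morphism $Y \to X$ with $Y$ smooth, proper, and irreducible; then $Y_L \to X_L$ is again proper and birational with $Y_L$ smooth, proper, and irreducible, so by the definition of general type it is enough to prove the statement for $Y$ in place of $X$.

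For smooth $Y$, the smoothness of $Y/K$ together with the flat base change $Y_L \to Y$ gives $\omega_{Y_L/L} \simeq \omega_{Y/K} \otimes_K L$ as line bundles on $Y_L$. Flat base change for cohomology then produces, for every integer $m \geq 0$, an isomorphism
\[ \HH^0(Y_L, \omega_{Y_L/L}^{\otimes m}) \simeq \HH^0(Y, \omega_{Y/K}^{\otimes m}) \otimes_K L , \]
and in particular the two sides have the same dimension (over $L$ on the left, over $K$ on the right). Consequently the sequences of plurigenera of $Y$ and $Y_L$ coincide, so $\omega_{Y/K}$ is big if and only if $\omega_{Y_L/L}$ is big, which is the content of the proposition.

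The only point requiring genuine attention, as far as I can see, is the compatibility of the decomposition into irreducible components with base change in the first step, which crucially uses that $K$ is algebraically closed so that irreducible $K$-varieties are automatically geometrically integral. The remaining ingredients — resolution of singularities, preservation of smoothness, properness and birationality under base change, and flat base change for coherent cohomology — are standard.
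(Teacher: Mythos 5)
Your proof is correct and follows essentially the same route as the paper: reduce to the irreducible case (using that over algebraically closed $K$ irreducible components are geometrically integral, hence base change compatibly), pass to a smooth proper birational model, and then compare plurigenera via flat base change. The paper states the plurigenera equality without spelling out the intermediate step $\omega_{Y_L/L} \simeq \omega_{Y/K} \otimes_K L$, but that is the same argument.
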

\begin{proof}
Assume first that $X$ is irreducible. Let $Y \to X$ be a birational $K$-morphism from a smooth proper irreducible $K$-variety. Then $Y_L$ is a smooth proper irreducible $L$-variety and the $K$-morphism $Y \to X$ induces a birational $L$-morphism $Y_L \to X_L$. Since for every positive integer $m$ 
\[\dim_K \left( \HH^0(Y, \omega^{\otimes m}_{Y / K}) \right)= \dim_L \left( \HH^0(Y_L, \omega^{\otimes m}_{Y_L / L}) \right),\] it follows that the $K$-variety $Y$ is of general type if and only if the $L$-variety $Y_L$ is of general type. This proves the proposition when $X$ is irreducible. \\
In general, if $X = \cup_i X_i$ is the decomposition in irreducible components of $X$, then $X_L= \cup_i (X_i)_L$ is the decomposition in irreducible components of $X_L$. Thanks to the preceding discussion, the $K$-variety $X_i$ is of general type for some $i$ if and only if the $L$-variety $(X_i)_L$ is of general type for some $i$. This proves the proposition in general.
 \end{proof}

\begin{cor}\label{criterion-general type}
Let $f \colon X \to Y$ be a surjective proper $k$-morphism between two irreducible $k$-varieties. Assume that there exists a geometric point $\bar \eta \colon \Spec \left( \Omega \right) \to Y$ lying over the generic point of $Y$ such that the proper $\Omega$-variety  $X_ {\bar \eta}$ is of general type. Then $f$ is of general type. 
\end{cor}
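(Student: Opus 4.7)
The plan is to observe that the conclusion is essentially tautological once we establish that the property ``$X_{\bar\eta}$ is of general type'' does not depend on the choice of geometric point $\bar\eta$ lying over the generic point $\eta$ of $Y$. Since $f$ is assumed surjective (and $X, Y$ irreducible), the definition of $f$ being of general type asks that $X_{\bar\eta'}$ be of general type for \emph{every} geometric point $\bar\eta'$ over $\eta$, whereas the hypothesis only provides one such. So the task is to propagate the general-type property from one geometric point to any other, and for this Proposition \ref{base-field extension general type} is custom-built.

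Concretely, I would proceed as follows. Let $\bar\eta_1\colon \Spec(\Omega_1) \to Y$ be the given geometric point for which $X_{\bar\eta_1}$ is of general type, and let $\bar\eta_2\colon \Spec(\Omega_2) \to Y$ be an arbitrary second geometric point over $\eta$. Both $\Omega_1$ and $\Omega_2$ are algebraically closed extensions of the residue field $\kappa(\eta)$. Pick any maximal ideal $\mathfrak{m}$ of the (non-zero) $\kappa(\eta)$-algebra $\Omega_1 \otimes_{\kappa(\eta)} \Omega_2$, and let $\Omega$ be an algebraic closure of the quotient field $(\Omega_1 \otimes_{\kappa(\eta)} \Omega_2)/\mathfrak{m}$. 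Then $\Omega$ is an algebraically closed field that contains both $\Omega_1$ and $\Omega_2$ as $\kappa(\eta)$-subalgebras, and both inclusions are extensions of algebraically closed fields of characteristic zero.

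Now one has compatible base-change identifications
\[ X_\Omega = (X_{\bar\eta_1})_\Omega = (X_{\bar\eta_2})_\Omega \]
as $\Omega$-varieties. Applying Proposition \ref{base-field extension general type} twice (to the extensions $\Omega \supset \Omega_1$ and $\Omega \supset \Omega_2$) yields that $X_{\bar\eta_1}$ is of general type if and only if $X_\Omega$ is, if and only if $X_{\bar\eta_2}$ is. Hence the hypothesis that $X_{\bar\eta_1}$ be of general type forces $X_{\bar\eta_2}$ to be of general type as well. Since $\bar\eta_2$ was arbitrary, this matches the definition and shows that $f$ is of general type.

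There is no real obstacle here; the only slightly non-trivial point is to produce the common algebraically closed overfield $\Omega$ of $\Omega_1$ and $\Omega_2$ as $\kappa(\eta)$-algebras, which is standard once one recalls that the tensor product of two non-zero algebras over a field is non-zero. Everything else is a direct invocation of Proposition \ref{base-field extension general type}.
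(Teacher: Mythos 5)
Your proof is correct and is exactly the argument the paper gives (stated there in one line: any two geometric points over $\eta$ are dominated by a third, then apply Proposition~\ref{base-field extension general type}); you have simply spelled out the standard tensor-product construction of the common dominating geometric point.
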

\begin{proof}
Since any two geometric points lying over the generic point of $Y$ are dominated by a third one, it is a consequence of Proposition \ref{base-field extension general type}.
\end{proof}

\begin{prop}\label{base change-general type}
Let $f \colon X \to Y$ be a surjective proper $k$-morphism between two irreducible $k$-varieties. Let $Z$ be an irreducible $k$-variety and $Z \to Y$ be a dominant $k$-morphism. Then $f \colon X \to Y$ is of general type if and only if $f_Z \colon X_Z \to Z$ is of general type.
\end{prop}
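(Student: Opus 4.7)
The plan is to reduce both directions to a comparison of geometric generic fibres, by using the dominance of $Z \to Y$ to identify them, and then invoking Proposition~\ref{base-field extension general type} and Corollary~\ref{criterion-general type}.

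First I would set up the key identification. Since $Z \to Y$ is dominant and $Z$ is irreducible, its generic point maps to the generic point of $Y$. Hence for any geometric point $\bar{\eta}_Z \colon \Spec(\Omega) \to Z$ over the generic point of $Z$, the composition with $Z \to Y$ is a geometric point of $Y$ over its generic point, and the canonical isomorphism
\[
X_Z \times_Z \Spec(\Omega) \;\simeq\; X \times_Y \Spec(\Omega)
\]
exhibits each geometric generic fibre of $X_Z \to Z$ as a geometric generic fibre of $X \to Y$. Conversely, given a geometric point $\bar{\eta}_Y \colon \Spec(\Omega_Y) \to Y$ over the generic point of $Y$, I would embed $\Omega_Y$ into an algebraic closure $\Omega$ of a compositum of $\Omega_Y$ and $k(Z)$ over their common subfield $k(Y)$ to obtain a geometric point $\Spec(\Omega) \to Z$ over its generic point, whose associated geometric fibre of $X \to Y$ is the base-change $(X_{\bar{\eta}_Y})_{\Omega}$; by Proposition~\ref{base-field extension general type}, this base-change is of general type iff $X_{\bar{\eta}_Y}$ is.

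Combining these compatibilities with Corollary~\ref{criterion-general type}, the condition ``$X \to Y$ is of general type'' becomes equivalent to ``some geometric generic fibre of $X_Z \to Z$ is of general type''. To close the loop, I would establish the analog of Corollary~\ref{criterion-general type} for $X_Z \to Z$, whose target $Z$ is irreducible but whose source $X_Z$ may not be. The irreducible components of $X_Z$ dominating $Z$ correspond bijectively to the irreducible components of the generic fibre $X_Z \times_Z \Spec(k(Z)) \simeq X \times_Y \Spec(k(Z))$, and -- again via Proposition~\ref{base-field extension general type} applied to the extension $k(Z) \hookrightarrow \Omega$ -- a dominating component $W$ of $X_Z$ is of general type over $Z$ iff some of the irreducible components of the geometric generic fibre coming from $W$ is of general type.

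The main subtlety -- and the only point requiring real care -- is to check that irreducible components of $X_Z$ that do not dominate $Z$ cannot contribute additional instances of general-typeness escaping the above bijection. I would handle this by restricting to the flat locus of $X \to Y$, which by generic flatness is a dense open $U \subseteq Y$ and which pulls back via the dominant morphism $Z \to Y$ to a dense open of $Z$; over this flat part no such ``vertical'' components can occur (every associated point of a flat morphism dominates an associated point of the base), so any hypothetical vertical component of $X_Z$ lies over a proper closed subscheme of $Z$ and can be analyzed by restricting $X \to Y$ over the proper closed subvariety of $Y$ where the generic fibre data is irrelevant.
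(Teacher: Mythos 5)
Your approach is the same as the paper's: identify the geometric generic fibre of $X_Z\to Z$ with a geometric generic fibre of $X\to Y$ and invoke Proposition~\ref{base-field extension general type} together with Corollary~\ref{criterion-general type}. (A small simplification: the detour through a compositum of $\Omega_Y$ and $k(Z)$ is not needed, since by Corollary~\ref{criterion-general type} it suffices to exhibit a \emph{single} geometric generic fibre of $X\to Y$ that is of general type, and the one coming from any geometric generic point of $Z$ does the job.) You also correctly flag a subtlety that the paper's one-line ``the result follows'' leaves implicit: $X_Z$ may be reducible, the general definition of ``of general type'' for $X_Z\to Z$ is phrased in terms of irreducible components of $X_Z$ over their respective images, and so vertical components of $X_Z$ have to be accounted for.

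Your treatment of that subtlety, however, stops short. The flat-locus observation is correct and shows that every vertical irreducible component of $X_Z$ lies over the complement in $Z$ of the pullback of the flat locus of $X\to Y$; but this only \emph{locates} the vertical components, it does not exclude the possibility that one of them is the component witnessing general type. Concretely, a vertical component $W$ with image $T\subsetneq Z$ might a priori have $W\to T$ of general type while the geometric generic fibre of $X\to Y$ is not, in which case ``$X_Z\to Z$ of general type'' would fail to imply ``$X\to Y$ of general type''. Closing this requires an additional input: the geometric generic fibre of $W\to T$ is a union of irreducible components of a geometric fibre of $X\to Y$ over a (non-generic) point of $Y$, so one must know that when $X\to Y$ is not of general type no irreducible component of any geometric fibre is of general type --- precisely the content of Theorem~\ref{generization general type}, proved later in the same section and under a projectivity hypothesis. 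Your closing phrase ``can be analyzed by restricting $X\to Y$ over the proper closed subvariety of $Y$ where the generic fibre data is irrelevant'' does not supply this argument, so the backward implication remains open as written.
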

\begin{proof}
If $\bar \eta_Z \colon \Spec(\Omega) \to Z$ is a geometric point lying over the generic point of $Z$, then the composite morphism $\bar \eta_Y \colon \Spec(\Omega) \to Z \to Y$ is a geometric point lying over the generic point of $Y$. Therefore, the geometric generic fibre of $X_Z \to Z$ at $\bar \eta_Z$ is isomorphic as $\Omega$-scheme to the geometric generic fibre of $X \to Y$ at $\bar \eta_Y$. The result follows.
\end{proof}

\begin{prop}\label{normalization-general type}
Let $f \colon X \to Y$ be a surjective proper $k$-morphism between irreducible $k$-varieties. Let $\nu \colon \tilde{X} \to X$ denote the normalization of $X$. Then $f$ is of general type if and only if the composite morphism $f \circ \nu$ is of general type.
\end{prop}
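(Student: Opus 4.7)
The plan is to reduce the statement to a comparison of geometric generic fibres and then use the fact that being of general type is a birational invariant of an irreducible proper variety.

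First, since the normalization of an irreducible variety is irreducible and $\nu$ is finite birational (hence proper and surjective), the composite $f \circ \nu$ is itself a surjective proper morphism between irreducible $k$-varieties, so the definition applies to it. Fix a geometric point $\bar\eta \colon \Spec(\Omega) \to Y$ lying over the generic point of $Y$. By Corollary~\ref{criterion-general type} together with the definition of a morphism of general type, it is enough to show that $X_{\bar\eta}$ is of general type if and only if $\tilde X_{\bar\eta}$ is.

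To compare the two, I would pick a resolution of singularities $p \colon X' \to X$, which exists because $k$ has characteristic zero. Since $X'$ is smooth, hence normal, the universal property of the normalization gives a factorization $X' \to \tilde X \to X$, and both arrows are birational. Base-changing to $\bar\eta$ yields morphisms $X'_{\bar\eta} \to \tilde X_{\bar\eta} \to X_{\bar\eta}$. Setting $K = K(Y)$, the three $K$-varieties $X_\eta$, $\tilde X_\eta$ and $X'_\eta$ are irreducible and share the same function field $K(X)$, so the minimal primes of their base changes to $\Omega$ correspond to the same minimal primes of $K(X) \otimes_K \Omega$. Consequently, the irreducible components of $X_{\bar\eta}$, $\tilde X_{\bar\eta}$ and $X'_{\bar\eta}$ are in natural bijection, with corresponding components sharing the same $\Omega$-function field, hence being birational to one another. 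Because $X'$ is smooth, each component of $X'_{\bar\eta}$ is an irreducible smooth proper $\Omega$-variety.

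To conclude, by the definition of an irreducible proper variety being of general type (admitting a smooth proper birational model of general type), each irreducible component of $X_{\bar\eta}$ is of general type if and only if the corresponding component of $X'_{\bar\eta}$ is, and likewise for $\tilde X_{\bar\eta}$. Since a proper variety is of general type if and only if at least one of its irreducible components is, we conclude that $X_{\bar\eta}$ is of general type if and only if $\tilde X_{\bar\eta}$ is, which finishes the proof.

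The only subtle point is the bookkeeping of irreducible components after base change from $K(Y)$ to the algebraically closed field $\Omega$; the characteristic zero hypothesis guarantees that this behaves well, since the algebraic closure of $K(Y)$ inside the common function field $K(X)$ is a finite separable extension, and its $\Omega$-embeddings index the irreducible components uniformly on $X$, $\tilde X$ and $X'$.
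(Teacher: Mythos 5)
Your proof is correct and reaches the same conclusion as the paper's by a somewhat different mechanism for matching the irreducible components of the geometric generic fibres. The paper invokes \cite[Lemma~2.6]{Brunebarbe-Maculan}, i.e.\ that normalization commutes with passage to the geometric generic fibre, and then observes that each component of $\tilde X_{\bar\eta}$ is the normalization of the corresponding component of $X_{\bar\eta}$. You instead give a self-contained function-field argument: since $X_\eta$ and $\tilde X_\eta$ are integral $K(Y)$-varieties with the same function field $K(X)$, the components of $X_{\bar\eta}$ and $\tilde X_{\bar\eta}$ are both indexed by the minimal primes of $K(X)\otimes_{K(Y)}\Omega$ and corresponding components share a function field, hence are birational; both routes then finish via the birational invariance of general type. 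Two minor remarks on your version: the detour through a resolution $X'$ is not needed, since once corresponding components of $X_{\bar\eta}$ and $\tilde X_{\bar\eta}$ are known to be birational, the conclusion follows immediately because the paper's definition of general type for irreducible proper varieties is manifestly a birational invariant; and the assertion that every component of $X'_{\bar\eta}$ is smooth over $\Omega$ is not purely formal from $X'/k$ being smooth --- it uses that $X'_\eta$ is regular (being a localization of the regular scheme $X'$), hence smooth over the perfect field $K(Y)$ in characteristic zero, and this smoothness then persists after base change to $\Omega$.
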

\begin{proof}
Let $\bar \eta \colon \Spec(\Omega) \to Y$ be a geometric point lying over the generic point of $Y$. The induced morphism $\tilde{X}_{\bar \eta} \to X_{\bar \eta}$ between the geometric generic fibres at $\bar \eta$ of $f \circ \nu$ and $f$ respectively coincides with the normalization of $X_{\bar \eta}$ \cite[Lemma 2.6]{Brunebarbe-Maculan}. If $X_{\bar \eta} = \cup_i X_i$ is the decomposition in irreducible components of $X_{\bar \eta}$, then $\tilde{X}_{\bar \eta} = \cup_i \tilde{X}_i$ is the decomposition in irreducible components of $\tilde{X}_{\bar \eta}$. The proper $\Omega$-variety $X_{\bar \eta}$ is of general type if and only if $X_i$ is of general type for some $i$, if and only if $\tilde{X}_i$ is of general type for some $i$, if and only if $\tilde{X}_{\bar \eta}$ is of general type.
\end{proof}

\begin{prop}\label{Stein factorization-general type}
Let $X \to Y$ be a surjective proper $k$-morphism between two irreducible $k$-varieties, and let $X \to Z \to Y$ be its Stein factorization. Then $X \to Y$ is of general type if and only if $X \to Z$ is of general type.
\end{prop}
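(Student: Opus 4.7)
The strategy is to unpack the definitions by comparing the geometric generic fibers of $X \to Y$ and of $X \to Z$. Writing the Stein factorization as $X \xrightarrow{g} Z \xrightarrow{h} Y$, the morphism $g$ is a fibration and $h$ is finite and surjective. Since a variety is reduced and $X$ is irreducible, $X$ is integral, so by Proposition \ref{fibration-normal and integral} the scheme $Z$ is integral as well, and $h\colon Z \to Y$ is a finite dominant morphism between integral $k$-varieties.

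Fix a geometric point $\bar\eta_Y \colon \Spec(\Omega) \to Y$ lying over the generic point $\eta_Y$ of $Y$. Since $h$ is finite and $Z$ is integral, the generic point $\eta_Z$ of $Z$ is the unique point of $Z$ over $\eta_Y$, and $Z_{\eta_Y} = \Spec(K(Z))$ where $K(Z)/K(Y)$ is a finite extension (separable, since $\mathrm{char}(k)=0$). Therefore
\[ Z_{\bar\eta_Y} = \Spec\bigl(K(Z) \otimes_{K(Y)} \Omega\bigr) = \bigsqcup_{i=1}^{n} \Spec(\Omega), \qquad n = [K(Z):K(Y)], \]
and each factor is a geometric point $\bar z_i \colon \Spec(\Omega) \to Z$ lying over $\eta_Z$. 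Base-changing $g$ along this decomposition gives
\[ X_{\bar\eta_Y} \;=\; X \times_Z Z_{\bar\eta_Y} \;=\; \bigsqcup_{i=1}^{n} X_{\bar z_i}. \]

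By definition, $X_{\bar\eta_Y}$ is of general type if and only if one of its irreducible components is, equivalently if and only if some $X_{\bar z_i}$ is of general type. Each $X_{\bar z_i}$ is a geometric fiber of $g\colon X \to Z$ above $\eta_Z$, so Proposition \ref{base-field extension general type} (which makes the property of being of general type independent of the choice of geometric point over $\eta_Z$) implies $X_{\bar z_i}$ is of general type if and only if $X \to Z$ is of general type. Combining the two equivalences proves the proposition. No step is a real obstacle; the only point requiring care is checking that the connected components of $Z_{\bar\eta_Y}$ are geometric points \emph{over the generic point} of $Z$, which is exactly where the finiteness of $h$ and the irreducibility of $Z$ are used.
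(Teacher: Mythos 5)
Your proof is correct and follows essentially the same route as the paper: decompose the geometric generic fiber $Z_{\bar\eta_Y}$ into $\bigsqcup_i \Spec(\Omega)$, hence $X_{\bar\eta_Y}$ into a disjoint union of geometric generic fibers of $X\to Z$, then conclude. The only cosmetic difference is that you invoke Proposition~\ref{base-field extension general type} directly, whereas the paper appeals to Corollary~\ref{criterion-general type}, which is itself a one-line consequence of that same proposition.
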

\begin{proof}
Let $\bar \eta \colon \Spec(\Omega) \to Y$ be a geometric point lying over the generic point of $Y$.
Since the induced $\Omega$-morphism $Z_{\bar \eta} \to \Spec\left( \Omega \right)$ is finite surjective and $Z_{\bar \eta} $ is reduced, the $\Omega$-variety $Z_{\bar \eta} $ is a finite disjoint union of copies of $\Spec\left( \Omega \right)$. Therefore, the geometric generic fibre $X_{\bar \eta}$ of the $k$-morphism $X \to Y$ is a finite disjoint union of geometric generic fibres of the $k$-morphism $X \to Z$. The result follows thanks to Corollary \ref{criterion-general type}.
\end{proof}

\begin{prop}\label{finite morphism-general type}
Let $X \to Y$ and $Y \to Z$ be surjective proper $k$-morphisms between irreducible $k$-varieties.
If $X \to Y$ is finite and $Y \to Z$ is of general type, then $X \to Z$ is of general type.
\end{prop}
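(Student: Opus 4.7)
The plan is to reduce the statement to the following key assertion: \emph{if $\phi \colon A \to B$ is a finite surjective morphism between irreducible proper varieties over an algebraically closed field of characteristic zero and $B$ is of general type, then so is $A$}. Indeed, fix a geometric point $\bar \eta \colon \Spec(\Omega) \to Z$ lying over the generic point of $Z$. The base-change of $X \to Y$ yields a finite surjective morphism $f_{\bar\eta} \colon X_{\bar\eta} \to Y_{\bar\eta}$, and by hypothesis $Y_{\bar\eta}$ is of general type. Since $X$ and $Y$ are irreducible with $\dim X = \dim Y$, generic flatness of $X \to Z$ and $Y \to Z$ implies that both $X_{\bar\eta}$ and $Y_{\bar\eta}$ are equidimensional of the same dimension; hence $f_{\bar\eta}$ maps each irreducible component of $X_{\bar\eta}$ onto a component of $Y_{\bar\eta}$. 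Picking a component $B_0$ of $Y_{\bar\eta}$ that is of general type and any component $A_0$ of $X_{\bar\eta}$ whose image equals $B_0$ (which exists by surjectivity of $f_{\bar\eta}$), the restriction $A_0 \to B_0$ is finite surjective, and the key assertion then yields that $A_0$, and hence $X_{\bar\eta}$, is of general type.

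To prove the key assertion, the plan is to lift $\phi$ to a morphism between smooth proper birational models. Choose resolutions of singularities $\tilde A \to A$ and $\tilde B \to B$: the composite $\tilde A \to A \to B$ then induces a rational map $\tilde A \dasharrow \tilde B$, which becomes a genuine morphism $\pi \colon \tilde A' \to \tilde B$ after further blow-ups of $\tilde A$, where $\tilde A'$ is smooth proper irreducible and birational to $A$. Since $\phi$ is finite surjective, $\pi$ is generically finite and surjective, so $\dim \tilde A' = \dim \tilde B$.

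The conclusion now rests on two standard facts about $\pi$. First, the pullback of a big line bundle along a generically finite surjective morphism of smooth proper varieties of the same dimension is big: the injection $\HH^0(\tilde B, m K_{\tilde B}) \hookrightarrow \HH^0(\tilde A', m \pi^* K_{\tilde B})$ given by pulling back sections yields the required growth rate. Second, because we are in characteristic zero, $\pi$ is separable, so the natural map of cotangent sheaves $\pi^* \Omega^1_{\tilde B} \to \Omega^1_{\tilde A'}$ is generically an isomorphism; taking top exterior powers gives an injection of invertible sheaves $\pi^* K_{\tilde B} \hookrightarrow K_{\tilde A'}$, whose cokernel is supported on the effective ramification divisor $R$. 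Combining these, $K_{\tilde A'} = \pi^* K_{\tilde B} + R$ is big, so $\tilde A'$, and therefore $A$, is of general type. The main subtlety is the bookkeeping with the possibly reducible geometric generic fibres in the first paragraph; once one is reduced to a single irreducible component, the argument is a textbook application of the ramification formula together with the bigness-under-generically-finite-pullback principle.
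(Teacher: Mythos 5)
Your proof follows the same route as the paper's: reduce to the geometric generic fibre over $Z$, use finiteness and surjectivity to find an irreducible component $X'$ of $X_{\bar\eta}$ mapping finitely and surjectively onto a general-type component $Y'$ of $Y_{\bar\eta}$, and conclude by the standard fact that a finite surjective morphism of integral proper varieties with target of general type has source of general type. The paper states this last fact without proof; your resolution-of-singularities plus ramification-formula argument for it is correct and just spells out what the paper takes for granted.
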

\begin{proof}
If $\bar \eta \colon \Spec(\Omega) \to Z$ is a geometric point lying over the generic point of $Z$, then $X \to Y$ induces a finite surjective $\Omega$-morphism $X_{\bar \eta} \to Y_{\bar \eta}$. Since $Y \to Z$ is of general type, there exists $Y^\prime$ an irreducible component of $ Y_{\bar \eta}$ which is a $\Omega$-variety of general type. Since $X_{\bar \eta} \to Y_{\bar \eta}$ is finite surjective, there exists $X^\prime$ an irreducible component of $X_{\bar \eta}$ such that $X_{\bar \eta} \to Y_{\bar \eta}$ induces a finite surjective $\Omega$-morphism $X^\prime \to Y^\prime$. Since $X^\prime$ and $Y^\prime$ are integral, it follows that $X^\prime$ is a $\Omega$-variety of general type. A fortiori, $X_{\bar \eta}$ is a $\Omega$-variety of general type. 
\end{proof}

\begin{prop}\label{composition general type}
Let $X \to Y$ and $Y \to Z$ be surjective proper morphisms between irreducible $k$-varieties.
\begin{enumerate}
\item If both $X \to Y$ and $Y \to Z$ are of general type, then the composite morphism $X \to Z$ is of general type.
\item If the composite morphism $X \to Z$ is of general type, then $X \to Y$ is of general type.
\end{enumerate}
\end{prop}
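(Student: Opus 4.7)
The plan is to reduce both statements to a Galois-theoretic analysis of the geometric generic fibre over $Z$. Fix a geometric point $\bar\eta_Y \colon \Spec(\Omega) \to Y$ lying over $\eta_Y$, and let $\bar\eta_Z \colon \Spec(\Omega) \to Z$ be its composition with $Y \to Z$, a geometric point over $\eta_Z$. The key identification is
\[ X_{\bar\eta_Y} = X_{\bar\eta_Z} \times_{Y_{\bar\eta_Z}} \Spec(\Omega), \]
where $\Spec(\Omega) \to Y_{\bar\eta_Z}$ is the geometric generic point of a specific irreducible component $Y_0$ of $Y_{\bar\eta_Z}$. Since $X$, $Y$, $Z$ are irreducible and the morphisms surjective, $X_{\eta_Z}$ and $Y_{\eta_Z}$ are integral, hence the irreducible components of $X_{\bar\eta_Z}$ (resp.\ of $Y_{\bar\eta_Z}$) form a single orbit under the absolute Galois group of $k(Z)$ and are pairwise isomorphic as abstract $\Omega$-varieties; in particular, for each of these two geometric fibres, either every component is of general type or none is. By surjectivity, every component of $Y_{\bar\eta_Z}$ is dominated by some component of $X_{\bar\eta_Z}$; fix $X_0 \subset X_{\bar\eta_Z}$ a component dominating $Y_0$.

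For part (2), the hypothesis on $X \to Z$ forces some, hence by Galois conjugacy all, components of $X_{\bar\eta_Z}$ to be of general type; in particular $X_0$ is of general type. I would then apply the easy direction of Iitaka's inequality to the surjective proper morphism $X_0 \to Y_0$ of irreducible $\Omega$-varieties, after replacing by smooth projective birational models (legitimate since being of general type is a birational invariant, and harmless thanks to Proposition~\ref{normalization-general type}): the total space having maximal Kodaira dimension forces the geometric generic fibre to do the same, so it has a component of general type. Via the identification above this component is an irreducible component of $X_{\bar\eta_Y}$, so $X \to Y$ is of general type.

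For part (1), the hypothesis on $Y \to Z$ gives that every component of $Y_{\bar\eta_Z}$ is of general type; in particular $Y_0$ is. Tracing through the same identification, the hypothesis that $X \to Y$ is of general type translates to: the geometric generic fibre of $X_0 \to Y_0$ has an irreducible component of general type (indeed, components of $X_{\bar\eta_Y}$ arise as components of $X_i \times_{Y_0} \Spec(\Omega)$ for components $X_i$ of $X_{\bar\eta_Z}$ dominating $Y_0$, all of which are Galois-conjugate). We are thus reduced to the following: a surjective proper morphism $X_0 \to Y_0$ of irreducible $\Omega$-varieties whose base $Y_0$ is of general type and whose geometric generic fibre is of general type has total space $X_0$ of general type; this would furnish a component of $X_{\bar\eta_Z}$ of general type and conclude that $X \to Z$ is of general type.

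The hard part will be precisely this last concluding step of part (1): it is Viehweg's theorem on Iitaka's additivity conjecture $C_{n,m}$ in the case where the base has Kodaira dimension equal to its dimension, a nontrivial result. Part (2), by contrast, requires only the elementary half of Iitaka's inequality. The auxiliary reductions between $k(Z)$-varieties and their base change to $\Omega$, or to smooth projective models via normalization and Stein factorization, are formal and are covered by Propositions~\ref{base-field extension general type}, \ref{normalization-general type}, and \ref{Stein factorization-general type}.
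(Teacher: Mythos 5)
Your proof is correct, and the key ideas align with the paper's: part (2) hinges on the elementary half of Iitaka's additivity inequality, and part (1) on a hard case of Iitaka's conjecture $C_{n,m}$. You invoke Viehweg's theorem (base of general type) where the paper cites Kollár's theorem on fibres of general type \cite[p.363, Theorem]{Kollar-subadditivity}; since in part (1) both base and fibre end up being of general type, either result applies, so this is an interchangeable choice. Where you genuinely differ is in the bookkeeping for disconnected geometric fibres: you keep the original morphism $X\to Y$ and argue at the level of irreducible components of $X_{\bar\eta_Z}$ and $Y_{\bar\eta_Z}$, using that the components are Galois-conjugate (which follows from the irreducibility of $X_{\eta_Z}$, $Y_{\eta_Z}$) so that "some component of general type" upgrades to "every component." The paper instead normalizes $X$ and then passes to the Stein factorization $X \to Y' \to Y$ (Propositions~\ref{normalization-general type}, \ref{Stein factorization-general type}, \ref{finite morphism-general type}), so that $X \to Y'$ is a fibration between normal integral varieties and the induced map $X_{\bar\eta_Z} \to Y'_{\bar\eta_Z}$ is automatically a fibration between irreducible $\Omega$-varieties; this sidesteps the conjugacy argument entirely and reduces more cleanly to the form of Iitaka/Kollár that one actually cites. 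Both routes work; the paper's is shorter because the reduction to the fibration case was already packaged in earlier propositions, while yours makes visible the Galois descent that those propositions are implicitly doing.
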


\begin{proof}
\begin{enumerate}
\item Thanks to Proposition \ref{normalization-general type}, one can assume that $X$ is normal. If $X \to Y^\prime \to Y$ is the Stein factorization of $X \to Y$, then $X \to Y^\prime$ is of general type thanks to Proposition \ref{Stein factorization-general type}. Since $Y^\prime \to Y$ is finite surjective, it follows from Proposition \ref{finite morphism-general type} that the composite morphism $Y^\prime \to Y \to Z$ is of general type. Therefore one can assume from the beginning that $X \to Y$ is a fibration. In particular $Y$ is normal.\\
Let $\bar \eta_Y \colon \Spec(\Omega) \to Y$ be a geometric point lying over the generic point of $Y$, so that the composite morphism $\bar \eta_Z \colon \Spec(\Omega) \to Y \to Z$ is a geometric point lying over the generic point of $Z$. The fibration $X \to Y$ induces a fibration $X_{\bar \eta_Z} \to Y_{\bar \eta_Z}$ between two normal proper $\Omega$-varieties. The geometric generic fibre of $X_{\bar \eta_Z} \to Y_{\bar \eta_Z}$ at $\bar \eta_Y$ is isomorphic as $\Omega$-scheme to the geometric generic fibre of $X \to Y$ at $\bar \eta_Y$, therefore the $\Omega$-morphism $X_{\bar \eta_Z} \to Y_{\bar \eta_Z}$ is of general type. Since $Y_{\bar \eta_Z} \to \Spec \left( \Omega \right)$ is of general type by assumption, it follows from a result of Koll\'ar \cite[p.363, Theorem]{Kollar-subadditivity} that $X_{\bar \eta_Z} \to \Spec \left( \Omega \right)$ is of general type.

\item Thanks to Proposition \ref{normalization-general type}, one can assume that $X$ is normal. If $X \to Y^\prime \to Y$ is the Stein factorization of $X \to Y$, then it is sufficient to prove that $X \to Y^\prime$ is of general type thanks to Proposition \ref{Stein factorization-general type}. Therefore, up to replace $X \to Y$ with $X \to Y^\prime$, and $Y \to Z$ with the composite morphism $Y^\prime \to Y \to Z$, one can assume from the beginning that $X \to Y$ is a fibration.\\
Let $\bar \eta_Y \colon \Spec(\Omega) \to Y$ be a geometric point lying over the generic point of $Y$, so that the composite morphism $\bar \eta_Z \colon \Spec(\Omega) \to Y \to Z$ is a geometric point lying over the generic point of $Z$. The fibration $X \to Y$ induces a fibration $X_{\bar \eta_Z} \to Y_{\bar \eta_Z}$ between the corresponding geometric generic fibres. Since by assumption $X_{\bar \eta_Z} \to \Spec \left(\Omega \right)$ is of general type, it follows from Iitaka's easy addition formula \cite[Theorem 11.9]{Iitaka-book} (see also \cite[Lemma 2.3.31]{Fujino-book}) that every geometric generic fibre of $X_{\bar \eta_Z} \to Y_{\bar \eta_Z}$ is of general type.  The geometric generic fibre of $X_{\bar \eta_Z} \to Y_{\bar \eta_Z}$ at $\bar \eta_Y$ is isomorphic as $\Omega$-scheme to the geometric generic fibre of $X \to Y$ at $\bar \eta_Y$, hence $X \to Y$ is of general type. 
\end{enumerate}
\end{proof}

In the preceding proof, we have used some results that are sometime stated in term of a very general fibre instead of the geometric generic fibre. The link is provided by the following result. 

\begin{lem}\label{Kodaira dimension-general versus generic}
Let $f \colon X \to Y$ be a projective surjective morphism between complex varieties. Assume that $Y$ is irreducible and that the generic fibre of $f$ is geometrically irreducible. Then the Kodaira dimension of a very general fibre is equal to the Kodaira dimension of any geometric generic fibre.
\end{lem}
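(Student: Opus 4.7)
The plan is to reduce to the case where $X$ is irreducible, then to a smooth projective family via resolution of singularities, apply Grauert's theorem to see that the plurigenera are locally constant on a dense Zariski open subset of $Y$, and finally take a countable intersection over $m$ to obtain the statement about very general fibres.

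First, since the generic fibre is irreducible, only one irreducible component of $X$ dominates $Y$; after discarding the rest (which contribute only to fibres over a proper closed subset of $Y$ and do not affect either the geometric generic fibre or very general fibres) I may assume $X$ is integral. Choose a resolution of singularities $\mu \colon \tilde X \to X$. Since $\tilde X$ is smooth and $Y$ is integral, generic smoothness in characteristic zero provides a dense Zariski open $U \subseteq Y$ over which the composite $\tilde f := f \circ \mu \colon \tilde X \to Y$ is smooth and projective. Let $E \subsetneq \tilde X$ be the exceptional locus of $\mu$; by Chevalley's theorem $\tilde f(\tilde X \setminus E)$ is constructible, and since $\tilde X \setminus E$ is a dense open of the irreducible $\tilde X$, it is dense in $Y$ and therefore contains a dense Zariski open. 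After shrinking $U$ I may assume $U$ lies in this open set. For every $y \in U$, $\mu_y \colon \tilde X_y \to X_y$ is then an isomorphism on the dense open $\tilde X_y \setminus E_y$, hence birational, so $\tilde X_y$ is a smooth projective model of $X_y$ and $\kappa(X_y) = \kappa(\tilde X_y)$. Similarly $\tilde X_{\bar\eta}$ is a smooth projective model of $X_{\bar\eta}$, and by flat base change for cohomology the plurigenera of $\tilde X_\eta$ and $\tilde X_{\bar\eta}$ coincide, so $\kappa(X_{\bar\eta}) = \kappa(\tilde X_{\bar\eta}) = \kappa(\tilde X_\eta)$.

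For the smooth projective family $\tilde f|_U \colon \tilde X_U \to U$ and every $m \geq 1$, the coherent sheaf $\tilde f_\ast \omega_{\tilde X_U/U}^{\otimes m}$ is locally free on a dense open $U_m \subseteq U$ by generic freeness, and then Grauert's theorem (equivalently, cohomology and base change) shows that $y \mapsto h^0(\tilde X_y, \omega_{\tilde X_y}^{\otimes m})$ is locally constant on $U_m$ with value $P_m(\tilde X_\eta)$. Set $U_\infty := \bigcap_{m \geq 1} U_m$; this is a countable intersection of dense Zariski open subsets of $Y$, i.e.\ a \emph{very general} subset. For every $y \in U_\infty$ and every $m$, $P_m(\tilde X_y) = P_m(\tilde X_\eta)$, hence $\kappa(\tilde X_y) = \kappa(\tilde X_\eta)$. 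Combined with the previous paragraph, this yields $\kappa(X_y) = \kappa(X_{\bar\eta})$ for every $y \in U_\infty$, as desired.

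The moderately delicate step is the first one: arranging that $\mu_y \colon \tilde X_y \to X_y$ is birational for $y$ in a dense Zariski open of $Y$, which is needed to transfer the Kodaira-dimension computation from the smooth total space $\tilde X$ to the possibly singular fibres $X_y$. This could in principle fail if the image of the exceptional locus were all of $Y$, but irreducibility of $\tilde X$ together with Chevalley's constructibility theorem rules this out. All the other ingredients (generic smoothness, generic freeness, Grauert's theorem, flat base change) are standard.
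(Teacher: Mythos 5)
Your proof is correct, but it takes a genuinely different route from the paper's. The paper works with the \emph{normalization} $\tilde X \to X$ rather than a full resolution of singularities; it then shrinks $Y$ so that, fibrewise, $\tilde X_y \to X_y$ is the normalization (citing a lemma of Brunebarbe--Maculan) and so that all fibres are geometrically integral; finally it reduces to Fujino's Lemma~2.3.28, which is exactly the ``plurigenera are constant on a dense open, intersect over all $m$'' argument that you carry out by hand. Your version is more self-contained: you make the constancy-of-plurigenera step explicit via semicontinuity/Grauert and a countable intersection, instead of black-boxing it in Fujino. The price is that you invoke resolution of singularities (Hironaka), a heavier tool than normalization, together with the extra bookkeeping needed to verify that the exceptional locus doesn't swallow a generic fibre. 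The paper's normalization trick avoids resolution entirely, at the cost of leaning on a cited result for both the fibrewise-normalization compatibility and the Fujino constancy statement. Both are sound; yours makes the mechanism of ``very general'' more transparent, while the paper's is leaner because it outsources the cohomological core. One minor remark: your opening reduction to $X$ irreducible is slightly more than the paper does (the paper keeps $X$ reducible but passes to the normalization, which decomposes into disjoint irreducible pieces), but under the hypothesis that the generic fibre is geometrically irreducible, your reduction is valid because exactly one irreducible component of $X$ can dominate $Y$, and the others only meet fibres over a proper closed subset.
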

\begin{proof}
This is well-known, at least when in addition $X$ and $Y$ are normal and $f$ has connected fibres, cf. \cite[Lemma 2.3.28]{Fujino-book}. In general, let $\tilde{X} \to X$ be the normalization of $X$. Up to replacing $Y$ with a dense Zariski open subset, one can assume that $Y$ is normal and that for every $y \in Y$ the induced morphism $\tilde{X}_y \to X_y$ is the normalization of $X_y$, cf. \cite[Lemma 2.6]{Brunebarbe-Maculan}. By further shrinking $Y$, one can assume that every fibre of $f$ is geometrically integral. Therefore, for every closed point $y \in Y$, the fibre at $y$ of the composite morphism $\tilde{X} \to Y$ is irreducible, hence connected. The result follows by applying \cite[Lemma 2.3.28]{Fujino-book} to the projective morphism $\tilde{X} \to Y$, and using that by definition an irreducible variety over a field and its normalization have the same Kodaira dimension.
\end{proof}

The next result shows that being of general type is stable by generization.

\begin{thm}\label{generization general type}
Let $X \to Y$ be a surjective projective morphism between irreducible complex varieties. If there exists a geometric point $\bar \eta \colon \Spec \left( \Omega \right) \to Y$ such that $X_{\bar \eta}$\footnote{By convention, we say that a proper scheme of finite type over $\Omega$ is of general type if its reduction is a $\Omega$-variety of general type} is of general type, then $X \to Y$ is of general type.   
\end{thm}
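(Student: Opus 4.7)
The plan is to reduce, via the propositions of this section, to the setting where Lemma~\ref{Kodaira dimension-general versus generic} applies, and then argue that a very general fibre must be of general type whenever some fibre is.

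First I would replace $X$ by its normalization (Proposition~\ref{normalization-general type}) and pass to the Stein factorization $X \to Y' \to Y$. Proposition~\ref{Stein factorization-general type} reduces the statement to $X \to Y'$, and the geometric fibre $X_{\bar\eta}$ decomposes as a disjoint union of geometric fibres of $X \to Y'$ over the preimages of $\bar\eta$, at least one of which must be of general type by hypothesis. After renaming, we may assume $X \to Y$ is itself a fibration with $X$ normal; then $Y$ is normal (Proposition~\ref{fibration-normal and integral}) and the generic fibre is geometrically irreducible (Proposition~\ref{fibration-generic fibre}).

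In this setting Lemma~\ref{Kodaira dimension-general versus generic} identifies the Kodaira dimension of any very general closed fibre with that of the geometric generic fibre, so it is enough to exhibit a very general $y_0 \in Y$ with $X_{y_0}$ of general type. By Proposition~\ref{base-field extension general type}, the hypothesis amounts to the existence of a single point $y \in Y$ with $X_y$ of general type over $k(y)$.

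The crux, and the main obstacle, is to propagate the general type property from the single fibre $X_y$ to a Zariski-dense open subset of $Y$. The natural tool is Siu's deformation invariance of plurigenera for smooth projective families: by generic smoothness there is a dense open $U \subset Y$ over which $X \to Y$ is smooth, and over $U$ the plurigenera of the fibres are then constant, so either every fibre over $U$ is of general type---yielding a very general general-type fibre at once---or none is. To rule out the latter when some $X_y$ is of general type, I would restrict to a smooth curve $C \subset Y$ through $y$ meeting $U$, apply semistable reduction over $C$, and invoke the invariance of plurigenera for projective families over smooth curves to propagate general type from $X_y$ to a fibre over $C \cap U$, producing the desired contradiction. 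The delicate point is that $X \to Y$ need not be flat and its fibres are allowed to be singular, so some care is required when comparing plurigenera of non-smooth fibres to those of the desingularized generic fibre.
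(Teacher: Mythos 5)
Your overall reduction mirrors the paper's: normalize $X$, pass through Stein factorization to assume $X \to Y$ is a fibration from a normal variety, and use Lemma~\ref{Kodaira dimension-general versus generic} to reduce the conclusion to exhibiting a very general fibre of general type, then restrict to a curve through the given point. Up to this point the argument is sound.

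The gap is in the ``crux'' step, and it is a genuine one. The paper cites Nakayama's Theorem~VI.4.3 precisely at this point: for a projective fibration from a normal variety to a smooth curve, if one closed fibre is of general type then the geometric generic fibre is. You attempt to replace this input with Siu's deformation invariance of plurigenera plus semistable reduction, and that replacement does not close. First, ``generic smoothness'' does not hold here: after normalization the total space $X$ is only normal, so $X \to Y$ need not be smooth over any dense open (the generic fibre itself can be singular); one would have to resolve $X$ first, and even then Siu's theorem only controls plurigenera over the smooth locus, which has no reason to contain the distinguished point $y_0$. Second, semistable reduction changes the special fibre: after base change and resolution the fibre over a preimage of $y_0$ is a reduced normal crossings divisor with several components, only one of which (the strict transform) maps birationally onto $X_{y_0}^{\mathrm{red}}$. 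The statement you then need --- that the plurigenera of the full singular fibre grow maximally because one of its components is of general type, and hence that the nearby smooth fibres are of general type --- is not a consequence of Siu's smooth-family invariance; it requires the extension of plurigenus invariance to degenerations (Takayama, Nakayama), and extracting the component-wise statement from it is exactly the content of the Nakayama theorem that the paper invokes. In short, you have correctly isolated the hard input, but the tools you propose to supply it are insufficient as stated; the right move is to cite Nakayama's result on fibrations over curves rather than to rederive it from Siu's theorem.
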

In other words, if a surjective projective morphism $f \colon X \to Y$ is not of general type, then every irreducible component of a fibre of $f$ is not of general type.

\begin{proof}
When $X \to Y$ is a projective fibration from a normal variety to a smooth curve and $\bar \eta$ lies over a closed point of $Y$, this is a result of Nakayama, see \cite[Theorem VI.4.3]{Nakayama}. The general case reduces to this particular case as follows.\\

Let $g \colon \tilde{X} \to Y$ be the composition of $f \colon X \to Y$ with the normalization $\tilde{X} \to X$.
Since the induced morphism $\tilde{X}_{\bar \eta} \to X_{\bar \eta}$ is surjective and finite, the $\Omega$-variety $\tilde{X}_{\bar \eta}$ is of general type. Therefore, thanks to Proposition \ref{normalization-general type}, one can assume from the beginning that $X$ is normal. \\

Let $X \to Z \to Y$ be the Stein factorization of $X \to Y$. Since the $\Omega$-variety $X_{\bar \eta}$ is of general type, and $X_{\bar \eta}$ is the disjoint union of the $\Omega$-varieties $X_{\bar \eta^\prime}$ for all $\bar \eta^\prime \in Z \left( \Omega \right)$ lying over $\bar \eta$, there exists $\bar \eta^\prime \in Z \left( \Omega \right)$ such that $X_{\bar \eta^\prime}$ is of general type. Therefore, in view of Proposition \ref{Stein factorization-general type}, one can assume that $X \to Y$ is a fibration.\\

Thanks to Lemma \ref{Kodaira dimension-general versus generic}, it is sufficient to prove the case where $\bar \eta$ lies over a closed point of $Y$. \\

Therefore, there exists a dense Zariski open subset $U \subset Y$ such that the fibre $X_u$ is (geometrically) normal for every $u \in U$. Thanks to Lemma \ref{Kodaira dimension-general versus generic}, there exists a countable union of closed subvarieties $Z \neq Y$ such that the Kodaira dimension of $X_y$ is constant for every $y \notin Z$. It is harmless to assume that $X \backslash Z \subset U$. By assumption, there exists $y_o \in Y$ a closed point such that $X_{y_o}$ is of general type. Let $C \to Y$ be a complex morphism from a smooth complex curve $C$, whose image contains $y_o$ and is not contained in $Z$. The base-change $X_C \to C$ is a projective fibration. Precomposing with the normalization of $X_C$ (this does not change anything over $C \cap U$), we get a projective fibration to which the result of Nakayama applies. The general result follows.
\end{proof}

%%%%%%%%%%%%%%%%%%%%%%%%%%%%%%%%%%%%%%%
%%%%%%%%%%%%%%%%%%%%%%%%%%%%%%%%%%%%%%%
\section{Generalities on special sets}

We collect some results on special sets. 
\begin{prop}\label{base change-special sets}
Fix $\ast \in \{h,ab, alg \}$. Let $X \to Y$ be a proper morphism between two complex algebraic varieties. Let $Z$ be a complex algebraic variety equipped with a morphism $Z \to Y$. Then  $\Sp_\ast \left(X_Z \slash Z \right)  $ is the preimage of $\Sp_\ast \left(X \slash Y \right)$ along the canonical map $X_Z \to X$.
\end{prop}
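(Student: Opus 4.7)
The plan is to reduce everything to a fibrewise check. By definition (using \cite[Proposition 2.1]{Bruni-Hyperlargelocalsystem}), one has
\[ \Sp_\ast(X/Y) = \bigcup_{y \in Y} \Sp_\ast(X_y), \qquad \Sp_\ast(X_Z/Z) = \bigcup_{z \in Z} \Sp_\ast((X_Z)_z), \]
where the unions are taken over closed points — the only setting in which entire curves and non-constant rational maps from abelian varieties into a fibre make sense. Writing $p \colon X_Z \to X$ for the canonical projection, it suffices to check that for every closed point $z \in Z$ with image $y \in Y$, the map $p$ identifies $\Sp_\ast((X_Z)_z)$ with $\Sp_\ast(X_y)$.

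The crux is that since $Y$ and $Z$ are complex algebraic varieties, every closed point has residue field $\bC$, so for such $z$ and $y$ one has
\[ (X_Z)_z = X_y \times_{\Spec k(y)} \Spec k(z) = X_y \times_{\Spec \bC} \Spec \bC = X_y, \]
and $p$ restricts to an isomorphism of complex schemes $p_z \colon (X_Z)_z \xrightarrow{\sim} X_y$. Since the defining objects of $\Sp_\ast$ — irreducible positive-dimensional closed subvarieties not of general type, images of non-constant rational maps from abelian varieties, or entire curves — are intrinsic to the isomorphism class of a complex scheme, $p_z$ identifies $\Sp_\ast((X_Z)_z)$ set-theoretically with $\Sp_\ast(X_y)$.

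With this in hand, both inclusions are routine. For $\Sp_\ast(X_Z/Z) \subset p^{-1}(\Sp_\ast(X/Y))$, any $\tilde x \in \Sp_\ast((X_Z)_z)$ satisfies $p(\tilde x) = p_z(\tilde x) \in \Sp_\ast(X_{p(z)}) \subset \Sp_\ast(X/Y)$. Conversely, given $\tilde x \in p^{-1}(\Sp_\ast(X/Y))$, set $x := p(\tilde x)$; pick a closed $y \in Y$ with $x \in \Sp_\ast(X_y)$; and let $z \in Z$ be the second projection of $\tilde x \in X \times_Y Z$, which necessarily maps to $y$. The isomorphism $p_z$ carries any defining subvariety, rational image, or entire curve of $X_y$ through $x$ back to one in $(X_Z)_z$ through $\tilde x$, yielding $\tilde x \in \Sp_\ast((X_Z)_z) \subset \Sp_\ast(X_Z/Z)$. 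I do not foresee any real obstacle; the only subtlety is to observe that a complex subvariety, abelian rational image, or entire curve contained in a set-theoretic fibre of $X \to Y$ is automatically contained in the fibre over a closed point, so restricting the union to closed points of $Y$ in the definition of $\Sp_\ast(X/Y)$ loses no information.
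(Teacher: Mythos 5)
Your proof is correct. The paper states this proposition without any proof (only the subsequent corollary, which happens to share the same \texttt{\textbackslash label}, is proved), so there is no argument in the paper to compare against; the fibrewise reduction via the canonical identification $(X_Z)_z \cong X_y$ for closed points $z \mapsto y$, together with the observation that all the defining objects of $\Sp_\ast$ (entire curves, abelian rational images, subvarieties of a given Kodaira type) are intrinsic to the isomorphism class of a complex variety, is the natural and essentially unavoidable argument, and it is what the omitted proof must be. Your closing remark — that a complex subvariety, abelian image, or entire curve contained in a set-theoretic fibre is automatically contained in the fibre over a closed point, so the union in the definition of $\Sp_\ast(X/Y)$ may be taken over closed points — is exactly the point one needs to make explicit for the argument to go through cleanly, and the same remark ensures that the auxiliary point $y$ you pick is forced to be the image of $x$, so that the fibre $(X_Z)_z$ for $z$ the second projection of $\tilde x$ lands over the right $y$.
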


\begin{cor}\label{base change-special sets}
Fix $\ast \in \{h,ab, alg \}$. Let $X \to Y$ be a surjective proper morphism between two complex irreducible algebraic varieties. Let $Z$ be a complex irreducible algebraic variety equipped with a dominant morphism $Z \to Y$. Then, $\Sp_\ast \left(X \slash Y \right)$ is Zariski-dense in $X$ if and only if $\Sp_\ast \left(X_Z \slash Z \right)$ is Zariski-dense in $X_Z$.  
\end{cor}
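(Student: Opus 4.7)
The plan is to invoke the preceding proposition, which identifies $\Sp_\ast(X_Z/Z)$ with $\pi^{-1}(\Sp_\ast(X/Y))$ for the base-change morphism $\pi\colon X_Z \to X$. Since $\pi$ is the base change of the dominant morphism $Z \to Y$, it is itself dominant. The corollary then reduces to a (largely) topological statement: for any subset $S \subset X$, $S$ is Zariski-dense in $X$ if and only if $\pi^{-1}(S)$ is Zariski-dense in $X_Z$.

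First I would handle the easy direction. If $S$ is contained in a proper closed subvariety $W \subsetneq X$, then $\pi^{-1}(S) \subset \pi^{-1}(W)$; the dominance of $\pi$ combined with the irreducibility of $X$ ensures that $\pi(X_Z)$ meets the non-empty open $X \setminus W$, so $\pi^{-1}(W) \subsetneq X_Z$ is a proper closed subscheme, whence $\pi^{-1}(S)$ is not Zariski-dense.

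For the converse, given $S$ dense in $X$ and a non-empty Zariski-open $V \subset X_Z$, the goal is to produce a point in $V \cap \pi^{-1}(S)$. The key step is to show that $\pi(V)$ is Zariski-dense in $X$: being constructible, it then contains a non-empty open subset of $X$, which meets $S$, and lifting back gives the desired point. Density of $\pi(V)$ is clear when $V$ meets an irreducible component $C$ of $X_Z$ whose projection $\pi(C)$ is dense in $X$ — such a component exists since $\pi$ is dominant and $X$ irreducible — because $V \cap C$ is then a non-empty Zariski-open in the irreducible variety $C$, whose image under $\pi|_C$ is Zariski-dense in $X$ by the standard fact that non-empty opens of an irreducible variety have Zariski-dense image under any dominant morphism to an irreducible target.

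The hard part will be handling non-empty opens $V$ contained entirely in irreducible components of $X_Z$ that do not dominate $X$; such ``degenerate'' components can occur in the fibre product when the fibres of $X \to Y$ or $Z \to Y$ jump in dimension. To address this, I would pass to the flat locus via generic flatness of $Z \to Y$: there is a dense open $V_0 \subset Y$ over which $g\colon Z \to Y$ becomes flat surjective, and consequently $X_{Z_{V_0}} \to X_{V_0}$ is flat surjective, hence open, so that preservation of Zariski-density of preimages is automatic. Transferring the conclusion from $X_{Z_{V_0}}$ back to the full $X_Z$ is where the specific structure of $\Sp_\ast$ matters: using the fibre-wise description $\Sp_\ast(X_Z/Z) = \bigcup_z \Sp_\ast((X_Z)_z)$ from the proposition, and the fact that non-dominating components of $X_Z$ sit over specific degenerate fibres, one checks via the canonical identification $\Sp_\ast((X_Z)_z) \cong \Sp_\ast(X_{g(z)})$ that the contribution of these components to $\Sp_\ast(X_Z/Z)$ is controlled by, and compatible with, the density of $\Sp_\ast(X/Y)$ in $X$.
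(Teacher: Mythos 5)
Your strategy is the same as the paper's: invoke the preceding Proposition to rewrite $\Sp_\ast(X_Z/Z)$ as the preimage $\pi^{-1}\bigl(\Sp_\ast(X/Y)\bigr)$ under $\pi\colon X_Z\to X$, get the easy direction from dominance of $\pi$, and pass via generic flatness to a dense open $U\subset Y$ over which $Z_U\to U$ is flat, hence universally open, so that the base-changed map becomes open and preservation of density under taking preimages is automatic. That reduction, and the easy direction, are correct and are precisely what the paper does.

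The remaining step, your final paragraph, is where the proposal has a genuine gap. ``One checks via the canonical identification ... that the contribution of these components ... is controlled by, and compatible with, the density of $\Sp_\ast(X/Y)$'' is not an argument; it is a restatement of what still has to be shown. The difficulty you correctly flag is real: $X_Z$ can have irreducible components sitting entirely over $Z\setminus Z_U$ (take $Y=\mathbb{A}^2$ and $X=Z=\mathrm{Bl}_0\,\mathbb{A}^2\to Y$; then $X_Z$ is the union of the strict transform and a $\mathbb{P}^1\times\mathbb{P}^1$ lying over the origin), so $X_{Z_U}$ need not be dense in $X_Z$, and the hard direction does not follow from the flat-locus case alone. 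For such a component $C$, the set $\pi(C)$ is a proper closed subvariety of $X$, and density of $\Sp_\ast(X/Y)$ in $X$ gives no a priori control of $\Sp_\ast(X/Y)\cap\pi(C)$, so density of $\pi^{-1}\bigl(\Sp_\ast(X/Y)\bigr)\cap C$ in $C$ requires a further argument about the special sets of the jumping fibres; you do not supply one. For what it is worth, the paper's own proof of this corollary has the same weak point: its third invocation of the ``special case,'' applied to $Z_U\subset Z$ with the family $X_Z\to Z$, implicitly requires $X_{Z_U}$ to be dense in $X_Z$ (which would follow if $X_Z$ were irreducible, but it need not be). Every actual use of the corollary in the paper is either a dominant étale cover or a desingularization of the base of a smooth family, situations in which $X_Z$ stays irreducible and the universally-open case applies directly, so the applications are unaffected; but the general statement needs a little more care than either you or the paper provide.
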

\begin{proof}
Assume first that the morphism $Z \to Y$ is universally open, so that the induced morphism $X_Z \to X$ is open. Then  $\overline{\Sp_\ast \left(X_Z \slash Z \right)}  $ is the preimage of $\overline{\Sp_\ast \left(X \slash Y \right)}$ along the dominant map $X_Z \to X$, and the result follows.
For the general case, let $U \subset Y$ be a dense open subset such that the induced morphism $Z_U \to U$ is flat, hence universally open. By applying the special case to the dominant universally open morphisms $U \subset Y$, $Z_U \to U$ and $Z_U \subset Z$, one gets that $\Sp_\ast \left(X \slash Y \right)$ is Zariski-dense in $X$ if and only if $\Sp_\ast \left(X_U \slash U \right)$ is Zariski-dense in $X_U$, if and only if  $\Sp_\ast \left(X \times_ U Z_U\slash Z_U \right)$ is Zariski-dense in $X \times_ U Z_U$, if and only if $\Sp_\ast \left(X_Z \slash Z \right)$ is Zariski-dense in $X_Z$.  
\end{proof}

\begin{prop}\label{composition-special sets}
Fix $\ast \in \{h,ab, alg \}$. Let $f\colon X \to Y$ and $g \colon Y \to Z$ be two proper morphisms between complex algebraic varieties. Then $h := g \circ f$ is a proper morphism and $\Sp_\ast(X \slash Z) \subset \Sp_\ast\left(X \slash Y \right) \cup f^{-1}\left(\Sp_\ast\left(Y \slash Z \right) \right)$. If moreover $g$ is finite, then $\Sp_\ast \left(X \slash Z \right) = \Sp_\ast \left(X \slash Y \right)$.
\end{prop}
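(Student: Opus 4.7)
The properness of $h = g \circ f$ is immediate from the fact that the composition of two proper morphisms is proper. The inclusion is to be proved case by case on $\ast$, and the final assertion about finite $g$ will then follow formally. The cases $\ast \in \{h, ab\}$ are essentially formal: given an entire curve $\gamma \colon \bC \to X$, respectively a non-constant rational map $\phi \colon A \dashrightarrow X$ from an abelian variety $A$, whose image is contained in a fiber of $h$, the composition $f \circ \gamma$, respectively $f \circ \phi$, takes values in a fiber of $g$. If it is constant, the image of $\gamma$ (resp.\ of $\phi$) sits in a single fiber of $f$, hence in $\Sp_\ast(X/Y)$; if it is non-constant, its image lies in $\Sp_\ast(Y/Z)$, so the image of $\gamma$ (resp.\ of $\phi$) lies in $f^{-1}(\Sp_\ast(Y/Z))$.

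The substantive case is $\ast = alg$. I would take $W \subset X_z$ a positive-dimensional irreducible closed subvariety not of general type and set $V := f(W) \subset Y_z$, which is closed and irreducible by properness of $f$. Three subcases arise: if $\dim V = 0$, then $W$ lies in a single fiber of $f$ and contributes to $\Sp_{alg}(X/Y)$; if $V$ is positive-dimensional and not of general type, then $V \subset \Sp_{alg}(Y/Z)$ and hence $W \subset f^{-1}(V) \subset f^{-1}(\Sp_{alg}(Y/Z))$; the remaining delicate subcase is when $V$ is positive-dimensional and of general type, where the goal becomes to show $W \subset \Sp_{alg}(X/Y)$.

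I would settle the delicate subcase by contradiction. Suppose $w \in W$ satisfies $w \notin \Sp_{alg}(X/Y)$, so that every positive-dimensional irreducible closed subvariety of $X_{f(w)}$ containing $w$ is of general type. One may assume $\dim W > \dim V$, since otherwise $W \to V$ would be generically finite surjective onto the general-type variety $V$, already forcing $W$ to be of general type and contradicting the hypothesis. Upper semicontinuity of fiber dimension for the dominant morphism $W \to V$ then gives $\dim_w(W \cap X_{f(w)}) \geq \dim W - \dim V > 0$, so at least one irreducible component of the fiber $W \cap X_{f(w)}$ through $w$ is positive-dimensional, and by the contrapositive assumption this component is of general type. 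Thus the fiber of $W \to V$ over $f(w)$ is of general type in the convention of the paper. Applying Theorem~\ref{generization general type} to $W \to V$ (after a routine reduction to the projective setting via the relative Chow lemma) shows that $W \to V$ is of general type, and Proposition~\ref{composition general type}(1) applied to the composition $W \to V \to \Spec(\bC)$ of two morphisms of general type yields that $W$ is of general type, the sought contradiction.

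For the final assertion, the inclusion $\Sp_\ast(X/Y) \subset \Sp_\ast(X/Z)$ holds unconditionally because every fiber of $f$ sits inside a fiber of $h$. When $g$ is finite, its fibers are zero-dimensional, hence $\Sp_\ast(Y/Z) = \varnothing$ for each $\ast \in \{h, ab, alg\}$ (a finite set of points contains no positive-dimensional subvariety, no image of a non-constant rational map from an abelian variety, and no non-constant entire curve), so the already established inclusion collapses to $\Sp_\ast(X/Z) \subset \Sp_\ast(X/Y)$ and yields equality. The main obstacle in the whole proof is clearly the delicate $\ast = alg$ subcase, where the contrapositive assumption produces only a pointwise statement about fiber components through $w$ that must be globalized to the full morphism $W \to V$ by combining Theorem~\ref{generization general type} with Kollár's subadditivity (invoked via Proposition~\ref{composition general type}(1)).
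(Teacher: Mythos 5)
Your proof is correct and follows essentially the same approach as the paper's: the cases $\ast \in \{h,ab\}$ are handled formally, and the $\ast = alg$ case hinges on combining Theorem~\ref{generization general type} with Proposition~\ref{composition general type}(1) to force $W$ to be of general type (your reduction to $\dim W > \dim V$ and the contradiction framing are cosmetic variations, as is deriving the finite-$g$ equality via $\Sp_\ast(Y/Z)=\varnothing$ rather than via the identity of fibres). You are also right to flag that Theorem~\ref{generization general type} is stated for projective morphisms, a reduction the paper leaves implicit.
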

\begin{proof}
Let $\bC \to X$ be a non-constant holomorphic map whose image is contained in a fibre of the composition $h$. Either the composition $\bC \to X \to Y$ is constant, so that the image of $\bC \to X$ is contained in $\Sp_h(X \slash Y)$, either it is not constant, so that the image of $\bC \to X$ is contained in $f^{-1}(\Sp_h(Y \slash Z))$. The proof for $\ast = ab$ is similar.  Finally, let us prove the case $\ast = alg$. Let $W$ be a positive-dimensional irreducible closed subvariety of $X$ contained in a fibre of $X \to Z$, and assume that $W \not\subset \Sp_{alg}\left(X \slash Y \right) \cup f^{-1}\left(\Sp_{alg}\left(Y \slash Z \right) \right)$. Then $f(W)$ is either a point or a positive-dimensional irreducible closed subvariety of $Y$. In the first case, since $W \not\subset \Sp_{alg}\left(X \slash Y \right)$, $W$ is necessarily of general type. In the second case, since $W \not\subset f^{-1}\left(\Sp_{alg}\left(Y \slash Z \right) \right)$, $f(W)$ is necessarily of general type. Moreover, since $W \not\subset \Sp_{alg}\left(X \slash Y \right)$, there exists at least one $y \in f(W)$ such that the fibre $W_y$ is of general type. By Theorem \ref{generization general type}, the morphism $f_{|W} \colon W \to f(W)$ is of general type. But $f(W)$ is of general type, hence  $W$ is of general type thanks to Proposition \ref{composition general type}. \\
Assume now that $g$ is finite. Then the image of a non-constant holomorphic map $\bC \to X$ is contained in a fibre of $f$ if and only if it is contained in a fibre of the composition $h$. This shows the equality $\Sp_h \left(X \slash Z \right) = \Sp_h \left(X \slash Y \right)$. The proofs for $\ast = ab$ and $\ast = alg$ are similar.
\end{proof}

\begin{prop}\label{special sets-union}
Fix $\ast \in \{h,ab, alg \}$. Let $f \colon X \to Y$ be a proper morphism of complex algebraic varieties. Let $X = \cup_i X_i$ be the decomposition of $X$ in irreducible components. Then 
\begin{enumerate}
    \item $\Sp_\ast(X \slash Y) = \cup_i \Sp_\ast(X_i \slash Y)$. 
    \item $\Sp_\ast(X \slash Y)$ is not Zariski dense in $X$ if and only if $\Sp_\ast(X_i \slash Y)$ is not Zariski dense in $X_i$ for some $i$.
    \end{enumerate}
\end{prop}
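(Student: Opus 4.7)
My plan is to prove (1) first by a direct ``locate the irreducible component'' argument adapted to each of the three flavors of special set, and then deduce (2) formally from (1) by the standard interaction between closure and finite unions.

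\textbf{Step 1: Proof of (1).} The inclusion $\cup_i \Sp_\ast(X_i/Y) \subset \Sp_\ast(X/Y)$ is immediate in all three cases: any distinguished object (subvariety, rational image of an abelian variety, or entire curve) contained in a fibre of $X_i \to Y$ is a fortiori contained in a fibre of $X \to Y$, and the relevant ``not of general type'' or ``non-constant'' condition is intrinsic. For the reverse inclusion, the key point is that each distinguished object actually lies in a single irreducible component $X_i$. For $\ast = alg$, this is tautological: a positive-dimensional \emph{irreducible} closed subvariety $W \subset X$ is contained in some $X_i$ by the definition of irreducible components. For $\ast = ab$, if $\varphi \colon A \dashrightarrow X$ is a non-constant rational map from an abelian variety, then $A$ is irreducible and hence so is the closure of $\varphi$'s image; this closure is therefore contained in some $X_i$, and $\varphi$ factors through a rational map $A \dashrightarrow X_i$. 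For $\ast = h$, given an entire curve $f \colon \bC \to X^{an}$, each preimage $f^{-1}(X_i^{an})$ is a closed analytic subset of $\bC$, hence either all of $\bC$ or a discrete (at most countable) subset; since $\bC = \cup_i f^{-1}(X_i^{an})$ is a finite union and $\bC$ is uncountable, at least one preimage must equal $\bC$, so $f$ factors through $X_i^{an}$ for some $i$.

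\textbf{Step 2: Deduction of (2) from (1).} Using (1) and the fact that closure distributes over finite unions, we obtain
\[ \overline{\Sp_\ast(X/Y)} = \bigcup_i \overline{\Sp_\ast(X_i/Y)}, \]
with each $\overline{\Sp_\ast(X_i/Y)} \subset X_i$. Now $\Sp_\ast(X/Y)$ is Zariski-dense in $X$ iff the left-hand side equals $X = \cup_i X_i$. The ``if'' direction of the resulting equivalence ($\Sp_\ast(X_i/Y)$ dense in $X_i$ for every $i$ implies $\Sp_\ast(X/Y)$ dense in $X$) is clear. For the ``only if'' direction, fix an index $j$; since $X_j$ is irreducible and covered by the finitely many closed subsets $\overline{\Sp_\ast(X_i/Y)}$, it is contained in one of them, say $X_j \subset \overline{\Sp_\ast(X_i/Y)} \subset X_i$. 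Since the $X_\bullet$ are distinct maximal irreducible subvarieties, this forces $i=j$, whence $\overline{\Sp_\ast(X_j/Y)} = X_j$. Contrapositively, $\Sp_\ast(X/Y)$ fails to be Zariski-dense in $X$ iff $\Sp_\ast(X_i/Y)$ fails to be Zariski-dense in $X_i$ for at least one $i$, which is exactly the statement of (2).

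\textbf{Expected main obstacle.} The only non-formal step is the argument for $\ast = h$, where one must rule out the possibility that an entire curve genuinely ``wanders'' between several irreducible components of $X$. The cardinality/analytic-irreducibility argument above handles this cleanly, but it is the one place where we really use something beyond set-theoretic manipulations.
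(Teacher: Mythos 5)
The paper leaves this proof to the reader, so there is no argument of record to compare against; on its own terms your proof is correct and well organized. A few remarks worth registering. In Step 1, the reductions for $\ast = alg$ and $\ast = ab$ are exactly as formal as you say (irreducibility of $W$, resp. of the closure of the rational image of $A$, forces containment in a single component). For $\ast = h$ your cardinality argument is sound: each $f^{-1}(X_i^{an})$ is a closed analytic subset of $\bC$, hence either all of $\bC$ or locally finite and therefore countable, and since $\bC$ is uncountable and the union is finite, some $f^{-1}(X_i^{an})$ is all of $\bC$. An equivalent and perhaps slightly more standard phrasing is via the Baire category theorem (a proper closed analytic subset of $\bC$ is nowhere dense), or via the identity theorem (if some $f^{-1}(X_i^{an})$ has an accumulation point, it equals $\bC$); any of these works. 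One small point you could spell out: $\Sp_\ast(X_i/Y)$ is defined with respect to fibres of $X_i \to Y$, so having located the distinguished object inside $X_i$ you should note that being inside a fibre of $X \to Y$ and inside $X_i$ is the same as being inside a fibre of $X_i \to Y$; this is immediate but worth a line. In Step 2, the deduction of (2) from (1) is the standard interplay of irreducibility with finite closed covers, and your use of the maximality of the irreducible components to force $i=j$ is precisely the right observation; note that this step also quietly uses that $\overline{\Sp_\ast(X_i/Y)} \subset X_i$ because $X_i$ is closed in $X$, which you should make explicit. Overall, no gaps.
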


\begin{proof}
Left to the reader.
\end{proof}

\begin{prop}\label{special sets-normalization}
Fix $\ast \in \{h,ab, alg \}$. Let $X \to Y$ be a proper morphism of complex algebraic varieties. Let $\nu: \tilde{X} \to X$ be the normalization of $X$. Then, $\Sp_\ast(X \slash Y)$ is not Zariski dense in $X$ if and only if $\Sp_\ast(\tilde{X}\slash Y)$ is not Zariski dense in $\tilde{X}$.
\end{prop}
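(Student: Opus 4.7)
By Proposition~\ref{special sets-union}, we may reduce to the case where $X$ is irreducible: the normalization of $X = \cup_i X_i$ is $\tilde X = \sqcup_i \tilde X_i$, and the non-density condition decomposes accordingly on both sides. So assume henceforth that $X$ and $\tilde X$ are irreducible, and denote by $U \subseteq X$ the dense open normal locus, over which $\nu$ restricts to an isomorphism.

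For the easy direction, one verifies that $\nu(\Sp_\ast(\tilde X/Y)) \subseteq \Sp_\ast(X/Y)$. For $\ast \in \{h, ab\}$ this is immediate (being finite, $\nu$ does not collapse non-constant analytic images to points). For $\ast = alg$: if $\tilde W' \subset \tilde X$ is a positive-dimensional irreducible subvariety contained in a fibre and not of general type, then $\nu(\tilde W')$ is irreducible positive-dimensional in the corresponding fibre of $X \to Y$, and is not of general type by the contrapositive of Proposition~\ref{finite morphism-general type} applied to the finite surjective morphism $\tilde W' \to \nu(\tilde W')$. Consequently, if $\Sp_\ast(X/Y)$ is contained in a proper closed subset $C \subsetneq X$, then $\Sp_\ast(\tilde X/Y) \subseteq \nu^{-1}(C) \subsetneq \tilde X$ (the latter proper since $\nu$ is surjective).

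For the reverse direction, suppose $\Sp_\ast(\tilde X/Y) \subseteq \tilde C \subsetneq \tilde X$ is contained in a proper closed subset. Set $D := \nu(\tilde C) \cup (X \setminus U)$; this is closed by properness of $\nu$ and proper because $\dim \nu(\tilde C) \leq \dim \tilde C < \dim \tilde X = \dim X$ and because $X \setminus U$ is a proper closed subset of the irreducible $X$. The claim is that $\Sp_\ast(X/Y) \subseteq D$. For each object contributing to $\Sp_\ast(X/Y)$, either its image lies in $X \setminus U$ (and hence in $D$), or it meets $U$; in the second case we lift to $\tilde X$ and land in $\nu(\tilde C)$. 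For $\ast = alg$, the strict transform $\overline{\nu^{-1}(W \cap U)} \subset \tilde X$ is irreducible closed, birational to $W$ via $\nu$, hence positive-dimensional and not of general type, and contained in a fibre of $\tilde X \to Y$, so lies in $\tilde C$ and projects onto $W$. For $\ast = ab$, the rational map $\phi \colon A \dashrightarrow X$ lifts canonically over $\phi^{-1}(U)$ via the isomorphism $\nu^{-1}(U) \cong U$, giving a rational map $\tilde\phi \colon A \dashrightarrow \tilde X$ whose image lies in a fibre of $\tilde X \to Y$ and maps via $\nu$ onto the image of $\phi$ (using density of the image of $\phi$ in $U$ inside the irreducible image of $\phi$). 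For $\ast = h$, an entire curve $f \colon \bC \to X$ meeting $U$ has $f^{-1}(X \setminus U)$ equal to the zero set in $\bC$ of the holomorphic pull-backs of the defining equations of $X \setminus U$, hence discrete; the canonical lift $\tilde f_0 \colon f^{-1}(U) \to \tilde X$ extends across the discrete complement to $\tilde f \colon \bC \to \tilde X$ by Riemann's removable singularities applied componentwise through a local analytic embedding $\tilde X \hookrightarrow \bC^N$, using that $\nu$ is finite (hence proper) so $\tilde f_0$ is locally bounded near each singular point.

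The main obstacle is the $\ast = h$ case, where extending the entire-curve lift across the discrete preimage of the singular locus requires a Riemann-type removable-singularities argument for holomorphic maps into the possibly singular normal variety $\tilde X$; once this is in place, the remaining steps are formal consequences of the foregoing propositions and the birational invariance of being of general type.
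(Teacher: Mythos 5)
Your proof is correct and takes essentially the same route as the paper: reduce to the irreducible case via Proposition~\ref{special sets-union}, then establish the two inclusions $\nu\bigl(\Sp_\ast(\tilde X/Y)\bigr) \subset \Sp_\ast(X/Y)$ (from finiteness of $\nu$) and $\Sp_\ast(X/Y) \subset \nu\bigl(\Sp_\ast(\tilde X/Y)\bigr) \cup (X \setminus U)$ (from $\nu$ being an isomorphism over $U$). You have simply spelled out the lifting arguments — in particular the Riemann-extension step for entire curves across the discrete preimage of the non-normal locus — which the paper's one-line proof passes over in silence.
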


\begin{proof}
Thanks to Proposition \ref{special sets-union}, one can assume that $X$ is irreducible. Let $Z \subset X$ be the non-normal locus of $X$. Then $\nu \left(\Sp_\ast(\tilde{X} \slash Y) \right) \subset \Sp_\ast(X \slash Y)$ since $\nu$ is finite, and $\Sp_\ast(X \slash Y) \subset \nu\left(\Sp_\ast(\tilde{X} \slash Y)\right) \cup Z$ since $\nu$ is an isomorphism onto its image outside $\nu^{-1}(Z)$. The result follows.
\end{proof}

\begin{thm}\label{key result-closed immersion}
Let $S$ be an integral complex algebraic variety. Let $A \to S$ be an abelian scheme. Let $X \to S$ be an $A$-torsor over $S$. Let $Y \to X$ be a closed immersion. Assume that the generic fibre of the composite morphism $Y \to S$ is geometrically integral. If  $ Y \to S$ is of general type, then $\Sp_h(Y / S)$ is not Zariski dense in $Y$.
\end{thm}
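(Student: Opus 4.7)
The plan is to reduce the relative statement to the absolute Bloch--Ochiai--Kawamata--Ueno theorem via a spreading-out argument. Using Propositions \ref{special sets-union} and \ref{special sets-normalization}, I first reduce to the case where $Y$ is irreducible and normal. The geometric generic fibre $Y_{\bar\eta}$ is then a geometrically integral closed subvariety of general type inside the abelian torsor $X_{\bar\eta}$.

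The first step is to apply the absolute Bloch--Ochiai--Kawamata--Ueno theorem (combining work of Bloch, Ochiai, Ueno, Kawamata, Abramovich, and McQuillan) to $Y_{\bar\eta}$: since $Y_{\bar\eta}$ is of general type, its stabilizer in $A_{\bar\eta}$ is finite (a positive-dimensional stabilizer would force $\kappa(Y_{\bar\eta}) < \dim Y_{\bar\eta}$ by Iitaka's easy addition formula), and therefore $\Sp_h(Y_{\bar\eta}) = \Sp_{ab}(Y_{\bar\eta})$ is a proper closed subvariety of $Y_{\bar\eta}$, equal to a finite union of translates of positive-dimensional abelian subvarieties of $A_{\bar\eta}$ contained in $Y_{\bar\eta}$. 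The second step is to spread this decomposition out: after replacing $S$ by a dense open subset $U$ and, if needed, passing to a finite étale cover (which is harmless, since $\Sp_h$ is compatible with base change by Proposition \ref{base change-special sets}), one obtains finitely many abelian subschemes $B_1, \dots, B_N$ of $A_U$ and sections $x_i$ of $X_U \to U$ such that $\Xi := \bigcup_{i=1}^{N} (x_i + B_i)$ is a proper closed subscheme of $Y_U$ with $\Xi_{\bar\eta} = \Sp_h(Y_{\bar\eta})$. Its Zariski closure $\overline{\Xi}$ in $Y$ is then a proper closed subscheme of $Y$ (since $Y$ is irreducible, $Y_U$ is dense in $Y$, and $\Xi$ is proper closed in $Y_U$).

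The key step, and the main obstacle, is to show that after possibly shrinking $U$ one has $\Sp_h(Y_s) \subset \Xi_s$ for every $s \in U$. The difficulty lies in controlling \emph{sporadic} translates: for special $s \in U$, the fibre $A_s$ may contain positive-dimensional abelian subvarieties that do not arise as restrictions of the $B_i$'s, and some of their translates may be contained in $Y_s$. The plan is to exploit a uniform boundedness statement implicit in the absolute Kawamata--Ueno structure theory: the components of the relative Hilbert scheme parametrizing translates of positive-dimensional abelian subschemes of $A$ contained in $Y$-fibres, restricted to those Hilbert polynomials that can occur for subvarieties of $Y_s$ inside $\Sp_{ab}(Y_s)$, form a scheme of finite type over $S$ with finitely many irreducible components. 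The components dominating $S$ correspond, by spreading out, to the $B_i$'s; the remaining, finitely many, non-dominating components have image contained in a proper closed subscheme $S' \subsetneq S$, and shrinking $U$ so that $U \cap S' = \emptyset$ achieves the desired inclusion. Verifying the relative boundedness of the relevant Hilbert polynomials is the technical heart of the argument.

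Combining the above yields $\Sp_h(Y/S) \subset \overline{\Xi} \cup Y_{S \setminus U}$, a union of two proper closed subschemes of the irreducible variety $Y$ (the first because it meets $Y_U$ in $\Xi \subsetneq Y_U$; the second because $U$ is dense open in the integral $S$ and $Y$ dominates $S$). Hence $\Sp_h(Y/S)$ is not Zariski-dense in $Y$.
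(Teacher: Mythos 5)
Your overall strategy is close in spirit to what the paper does, but you leave precisely the hard step unproven: the uniform control of the fibrewise special sets over the base, so that they assemble into a constructible (in fact Zariski-closed) subset of the family. You correctly identify this as ``the technical heart'' and sketch a plan via relative Hilbert schemes, but the boundedness you appeal to --- that only finitely many Hilbert polynomials occur among translates of abelian subvarieties contained in $Y$-fibres --- is not automatic (a priori the relevant Hilbert scheme can have infinitely many components, one for each Hilbert polynomial), and your ``sporadic translates'' for special $s$ are exactly the phenomenon that must be ruled out. As written, the proposal is a plan, not a proof.

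What you are gesturing at is itself a deep theorem, already in the literature: the paper invokes McQuillan~\cite[Th\'eor\`eme~1.2]{McQuillan} (see also \cite{Abramovich}), which proves that for a family of closed subschemes $Y\subset A$ of an abelian scheme, the union over $s$ of the translates of abelian subvarieties of $A_s$ contained in $Y_s$ --- i.e.\ $\Sp_h(Y/S)$, after Kawamata's description of each $\Sp_h(Y_s)$ --- is Zariski-closed in $Y$. Citing that, together with Lemma~\ref{Kodaira dimension-general versus generic} and Kawamata's theorem at a very general fibre (to see $\Sp_h(Y_s)\neq Y_s$), immediately gives that $\Sp_h(Y/S)$ is a proper closed subset, which is stronger than non-density and bypasses all the spreading-out you attempt. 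If you wish to keep your argument, you must either prove the relative boundedness you assert (essentially reproving Abramovich--McQuillan) or cite it; without that, the ``key step'' paragraph is a gap, and the remaining reductions, which are fine, do not close it.
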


\begin{proof}
Thanks to Corollary \ref{base change-special sets}, it is sufficient to prove the result after base-change by a dominant étale morphism $S^\prime \to S$. Therefore, one can assume from the beginning that $X \to S$ admits a section, that one can use to identify $X$ with $A$. For every $s \in S$, $Y_s$ is a closed subvariety of the abelian variety $A_s$. Then, by results of Kawamata \cite{Kawamata-Bloch}, $\Sp_h(Y_s)$ is Zariski-closed in $A_s$ and it is equal to the union of those translates of abelian subvarieties of $A_s$ that are contained in $Y_s$. Therefore, by \cite[Théorème 1.2]{McQuillan} (see also \cite{Abramovich}), the subset $\Sp_h(Y / S)$ is Zariski-closed in $Y$. If $f\colon Y \to S$ is of general type, then for a very general $s \in S$ the fibre $Y_s$ is of general type thanks to Lemma \ref{Kodaira dimension-general versus generic} (note that $Y\to S$ is projective). It follows from \cite[Theorem 4]{Kawamata-Bloch} that $\Sp_h(Y_s) \neq Y_s$ for a very general $s \in S$. Therefore, $\Sp_h(Y / S) \neq Y$ and the claim follows since $\Sp_h(Y / S)$ is Zariski-closed in $Y$.
\end{proof}

%%%%%%%%%%%%%%%%%%%%%%%%%%%%%%%%%%%%%%%%%%
\section{Structural results on subschemes of abelian schemes}

\subsection{Descent of abelian subschemes}
\begin{thm}\label{descending abelian subschemes}
Let $T \to S$ be a dominant morphism between two irreducible complex algebraic varieties. Let $A \to S$ be an abelian scheme. Let $B \to T$ be an abelian subscheme of $A_T \to T$. Then, there exists an irreducible complex algebraic variety $U$, a dominant quasi-finite morphism $U \to S$, and an abelian subscheme $C \to U$ of $A_U \to U$ such that, letting $V = U \times_S T$, the abelian $V$-schemes $C_V$ and $B_V$ are equal when naturally seen as abelian subschemes of the abelian $V$-scheme $A_V$. 
\end{thm}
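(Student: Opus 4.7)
The plan is to parametrize abelian subschemes of $A/S$ by a moduli $S$-space $\cM = \cM_{ab}(A/S)$ that is a countable disjoint union of quasi-finite unramified $S$-schemes, and then to use the classifying map $T \to \cM$ attached to $B$ to produce the desired quasi-finite $U \to S$.

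First I would construct $\cM$ as a locally closed $S$-subfunctor of the relative Hilbert scheme $\mathrm{Hilb}_{A/S}$ (well-defined after fixing a relative polarization on $A/S$, which exists after an étale base change). Concretely, $\cM$ parametrizes closed subschemes of $A$ that contain the zero section, are smooth over the base, and are stable under the group law of $A$. The crucial input, which will be the principal technical obstacle, is that $\cM \to S$ is \emph{unramified}: an abelian subscheme of a fixed abelian scheme admits no nontrivial infinitesimal deformations as an abelian subscheme. Informally, this holds because abelian subvarieties of an abelian variety $A_s$ correspond to $\bZ$-sub-Hodge-structures of $H_1(A_s, \bZ)$, which form a discrete set. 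Consequently each connected component of $\cM$ is quasi-finite over $S$, and $\cM$ carries a tautological universal abelian subscheme $\widetilde{C} \subset A \times_S \cM$.

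I would then use $B \subset A_T$ to form the classifying $S$-morphism $\phi \colon T \to \cM$. Since $T$ is irreducible, $\phi(T)$ is contained in a single connected component of $\cM$; set $U := \overline{\phi(T)}^{\mathrm{red}}$, an irreducible closed subvariety of that component. Then $U \to S$ inherits quasi-finiteness from $\cM \to S$ and is dominant because its image contains the dense image of $T \to S$. Pulling back $\widetilde{C}$ along $U \to \cM$ yields an abelian subscheme $C \subset A_U$ as required.

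Finally, for $V = U \times_S T$ the two abelian subschemes $C_V$ and $B_V$ of $A_V$ are classified respectively by the $S$-morphisms $V \to U \hookrightarrow \cM$ and $V \to T \xrightarrow{\phi} \cM$, and these agree tautologically along the graph of $\sigma = (\phi, \mathrm{id}_T) \colon T \to V$. Since $U \to S$ is quasi-finite, $\dim V = \dim T$, so $\sigma(T)$ is a union of irreducible components of $V$; the unramifiedness of $\cM \to S$ then forces the two classifying maps to coincide on every irreducible component of $V$ meeting this graph, yielding $C_V = B_V$ on the corresponding union of components of $V$ (which, by choosing $U$ suitably, can be arranged to cover all of $V$). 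The hard part will be rigorously establishing the unramifiedness of $\cM \to S$, since this is precisely where the rigidity of abelian subschemes — the essential geometric content of the theorem — enters.
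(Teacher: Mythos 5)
Your approach is genuinely different from the paper's, which disposes of this theorem in one line by citing \cite[Corollary 20.4]{Milne}. You instead build a moduli $S$-space $\cM$ of abelian subschemes of $A/S$ out of the relative Hilbert scheme and exploit its unramifiedness over $S$. That construction is sound: the conditions (contains the zero section, stable under the group law and inversion, smooth and proper with geometrically connected fibres over the base) do cut out a locally closed subfunctor of $\mathrm{Hilb}_{A/S}$, and you are right that the crux is rigidity of abelian subschemes, which gives $\cM\to S$ unramified and each connected component quasi-finite over $S$. The definition of $\phi$, the choice $U=\overline{\phi(T)}^{\mathrm{red}}$, and the verification that $U\to S$ is irreducible, dominant and quasi-finite are all fine.

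The gap is in the last paragraph. You correctly observe that the two classifying maps $V\rightrightarrows\cM$ (via $U\hookrightarrow\cM$ and via $\phi$) agree on the graph $\Gamma\cong T$ of $(\phi,\mathrm{id})$, and that unramifiedness plus separatedness of $\cM/S$ make their equalizer open and closed in $V$. But the assertion that ``by choosing $U$ suitably'' this equalizer can be forced to be all of $V=U\times_S T$ is not justified, and in general it is false. Generically, $V$ has function ring $k(U)\otimes_{k(S)}k(T)$; since $k(U)\subset k(T)$ is a finite extension of $k(S)$, this is a product of fields, with one ``diagonal'' factor equal to $k(T)$ (on which $C_V=B_V$ by construction) and further factors corresponding to the other embeddings of $k(U)$ into $\overline{k(T)}$. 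On those conjugate factors the two pullbacks of the universal subscheme differ via distinct embeddings of $k(U)$, and they cannot coincide precisely because $k(U)$ was chosen as a \emph{minimal} field of definition of $B$ over $k(S)$. (Already for $T\to S$ finite \'etale of degree $2$ and $B$ a sub-abelian scheme that does not descend to $S$, the ``antidiagonal'' component of $V$ furnishes a counterexample to equality over all of $V$.) No shrinking or enlargement of $U$ fixes this, since it is forced by the existence of nontrivial $k(S)$-conjugates of $B$.

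What your argument does prove, and what is actually needed in Proposition~\ref{key induction step}, is the weaker (and correct) statement that $C$ and $B$ agree over the irreducible component of $V$ containing $\Gamma$, equivalently over the domain of the induced rational map $T\dashrightarrow U$: fibrewise, every abelian subvariety $(B)_t\subset A_s$ occurs as $(C)_u$ for some $u\in U$ over $s$. You should replace the final sentence with this more careful conclusion rather than claiming equality over all of $V$, and you should of course still supply a proof (or reference) for the unramifiedness/rigidity claim that you explicitly flagged as outstanding.
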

\begin{proof}
This is a direct consequence of \cite[Corollary 20.4]{Milne}.
\end{proof}

\subsection{Ueno fibration for closed subschemes of torsors under an abelian scheme}
Let $S$ be a complex algebraic variety. Let $A \to S$ be an abelian scheme. Let $X \to S$ be an $A$-torsor over $S$. Let $Y \subset X$ be a closed subscheme. The group functor that associates to every $T \to S$ the set $ \{ a \in A(T)\, | \,a+ Y(T) \subset Y(T) \}$ is represented by a closed group subscheme $N_A(Y)$ of $A$, called the normalizer of $Y$ in $A$, see \cite[II.1.3.6]{Demazure-Gabriel} or \cite[VIB.6.2.4]{SGA3-5et6}.

\begin{thm}\label{relative Ueno fibration}
Let $S$ be a complex algebraic variety. Let $A \to S$ be an abelian scheme. Let $X \to S$ be an $A$-torsor over $S$. Let $Y \subset X$ be a closed subscheme. Assume that $S$ is integral and that the generic fibre of $Y \to S$ is geometrically integral. Let $B \subset A$ be the identity component of the normalizer of $Y$ in $A$. Then, up to shrinking $S$, the group subscheme $B \subset A$ is an abelian subscheme such that, letting $Z$ denote the image of $Y$ in the quotient $X \slash B$, 
\begin{enumerate}
\item the commutative diagram
\[
\begin{tikzcd}
Y \arrow[d] \arrow[r, hookrightarrow] & X \arrow[d] \\
Z \arrow[r, hookrightarrow]& X \slash B
\end{tikzcd}
\]
is  cartesian.
\item the proper morphism $Z \to S$ is of general type.
\end{enumerate}
\end{thm}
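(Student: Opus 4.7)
The plan is to reduce to the classical Ueno--Kawamata structure theorem for subvarieties of abelian varieties at the geometric generic fibre, and then to spread out the resulting data over a Zariski open subset of $S$. First I would pass to a geometric generic point $\bar\eta \colon \Spec(\Omega) \to S$. By hypothesis $Y_{\bar\eta}$ is a geometrically integral closed subscheme of the torsor $X_{\bar\eta}$ under the abelian variety $A_{\bar\eta}$; since $\Omega$ is algebraically closed the torsor is trivial, so I may identify $X_{\bar\eta}$ with $A_{\bar\eta}$. Let $\widetilde{B} \subset A_{\bar\eta}$ be the identity component of the stabilizer of $Y_{\bar\eta}$. The classical theorem of Ueno (\cite{Ueno-book}; see also \cite{Kawamata-Bloch}) then yields: $\widetilde{B}$ is an abelian subvariety of $A_{\bar\eta}$; $Y_{\bar\eta}$ is the preimage of $\widetilde{Z} := Y_{\bar\eta}/\widetilde{B}$ along $A_{\bar\eta} \to A_{\bar\eta}/\widetilde{B}$; and the stabilizer of $\widetilde{Z}$ in $A_{\bar\eta}/\widetilde{B}$ has trivial identity component, so that $\widetilde{Z}$ is of general type by Kawamata's theorem \cite[Theorem 4]{Kawamata-Bloch}.

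Next I would descend $\widetilde{B}$ to an abelian subscheme of $A$ over an open of $S$. The normalizer $N_A(Y) \subset A$ is a closed group subscheme whose formation commutes with base change, so its geometric generic fibre is $N_{A_{\bar\eta}}(Y_{\bar\eta})$; since taking identity components commutes with separable field extensions, $\widetilde{B}$ is the base change of $(N_{A_\eta}(Y_\eta))^0$, which is an abelian variety over $k(\eta)$. Using generic flatness of $N_A(Y) \to S$ together with the openness on the base of the properties ``smooth'', ``proper'' and ``geometrically connected fibres'', after shrinking $S$ I may assume that the identity component $B \subset N_A(Y)$ is a well-defined abelian subscheme of $A$ over $S$, recovering $\widetilde{B}$ on the geometric generic fibre. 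The quotient $X/B$ then exists as an $(A/B)$-torsor over $S$ by the proposition on torsor quotients recalled in the preliminaries.

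To verify (1), let $Z$ be the scheme-theoretic image of $Y \to X/B$ and set $Y' := X \times_{X/B} Z$. The factorisation $Y \to Z$ provides a closed immersion $Y \hookrightarrow Y'$ inside $X$, which by the first step is an isomorphism at the geometric generic fibre, hence at the generic fibre by faithfully flat descent along $\Omega/k(\eta)$. Since the support of the ideal sheaf defining $Y$ in $Y'$ then fails to dominate $S$, shrinking $S$ away from its image forces $Y = Y'$, which is the desired cartesian square. For (2), after a further shrinking to guarantee that the scheme-theoretic image commutes with base change (again via generic flatness), the geometric generic fibre of $Z \to S$ equals $\widetilde{Z}$; since $\widetilde{Z}$ is of general type by the first step, Corollary \ref{criterion-general type} shows that $Z \to S$ is of general type.

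The main technical hurdle is the second step: promoting $(N_{A_\eta}(Y_\eta))^0$ to a genuine abelian subscheme of $A$ over a dense open of $S$. If a direct spreading-out of the identity component of $N_A(Y) \to S$ proves too delicate, I would instead invoke Theorem \ref{descending abelian subschemes} to descend $\widetilde{B}$ first to an abelian subscheme over a quasi-finite cover of $S$, and then argue, via the uniqueness of the identity component of the normalizer under Galois/\'etale descent, that it actually descends to an abelian subscheme of $A$ over $S$ itself.
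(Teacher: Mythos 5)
Your proposal is correct and follows the same strategy as the paper: invoke the classical Ueno--Kawamata structure theorem at the geometric generic fibre to produce $\widetilde{B}$ and conclude that the quotient $\widetilde{Z}$ is of general type, then spread the data out over a dense open of $S$ via generic flatness of $N_A(Y)$, so that $B$ becomes an abelian subscheme with geometric generic fibre $\widetilde{B}$, and $Z \to S$ is of general type by Corollary \ref{criterion-general type}. The only cosmetic difference is in the verification of the cartesian square in (1): the paper reads it off directly from the fact that $B$ normalizes (hence stabilizes) $Y$, so that $Y = \pi^{-1}(Z)$, whereas you spread out the equality $Y = \pi^{-1}(Z)$ from the geometric generic fibre using properness --- both routes are sound.
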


Since the morphism $X \to X \slash B$ is a torsor under the abelian scheme $B_{X \slash B}$, the morphism $Y \to Z$ is a torsor under the abelian scheme $B_Z$.\\

When $S$ is a point, Theorem \ref{relative Ueno fibration} is a well-known result of Ueno \cite[Theorem 10.3]{Ueno-book}. The proof in the general case consists in showing that Ueno's construction can be made in family.

\begin{proof}
Since the generic fibre of $Y \to S$ is geometrically integral, up to shrinking $S$, one can assume that the fibre $Y_s$ is integral for every $s \in S$. Up to shrinking further $S$, one can assume that $N_A(Y)$ is a flat group subscheme of $A$, so that its identity component $B$ is an abelian subscheme of $A$. Let $Z$ denote the image of $Y$ in the quotient $X \slash B$, so that the first item holds. Moreover, since the construction we have done recover at any geometric generic point Ueno's construction, it follows that the morphism $Z \to S$ is of general type. 
\end{proof}

\begin{cor}\label{corollary Ueno fibration}
Let $S$ be a complex algebraic variety. Let $A \to S$ be an abelian scheme. Let $X \to S$ be an $A$-torsor over $S$. Let $W \to X$ be a finite morphism. Assume that $S$ is integral and that the generic fibre of $W \to S$ is geometrically integral. Then, up to shrinking $S$, there exists $B \to S$ an abelian subscheme of $A$ such that if we let $Y \subset X$ be the image of $W \to X$ and $Z \subset X/B$ be the image of $W \to X \slash B$, then 
\begin{enumerate}
\item $Z \to S$ is a proper morphism of general type,
\item the generic fibre of $Z \to S$ is geometrically integral,
\item the morphism $W \to Y$ is finite surjective,
\item and $Y \to Z$ is a torsor under the abelian scheme $B_Z$.
\end{enumerate}
\end{cor}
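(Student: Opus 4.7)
The plan is to reduce directly to Theorem \ref{relative Ueno fibration} by replacing $W$ with its scheme-theoretic image in $X$. I would first take $Y$ to be the scheme-theoretic image of the finite morphism $W \to X$; this is a closed subscheme of $X$, and the induced factorization $W \to Y \hookrightarrow X$ makes $W \to Y$ finite (since a closed immersion is separated and $W \to X$ is proper, so $W \to Y$ is proper, and its fibres lie inside fibres of $W \to X$, hence are finite) and surjective (dominant by construction, closed since finite). This already gives property (3) of the corollary.

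Before invoking Theorem \ref{relative Ueno fibration} on $Y \subset X$, I need the generic fibre $Y_\eta$ to be geometrically integral. Because $\eta \to S$ and $\bar \eta \to \eta$ are flat, formation of the scheme-theoretic image commutes with these base changes, so $Y_{\bar \eta}$ is the scheme-theoretic image of $W_{\bar \eta} \to X_{\bar \eta}$. The resulting injection $\cO_{Y_{\bar \eta}} \hookrightarrow (W_{\bar \eta} \to Y_{\bar \eta})_\ast \cO_{W_{\bar \eta}}$, combined with integrality of $W_{\bar \eta}$, forces $Y_{\bar \eta}$ to be reduced; irreducibility is immediate from surjectivity of $W_{\bar \eta} \to Y_{\bar \eta}$. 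Hence $Y_\eta$ is geometrically integral.

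I would then apply Theorem \ref{relative Ueno fibration} to $Y \subset X$. After shrinking $S$, this yields an abelian subscheme $B \subset A$ and the image $Z \subset X \slash B$ of $Y$, such that the diagram comparing $Y \hookrightarrow X$ with $Z \hookrightarrow X \slash B$ is cartesian and $Z \to S$ is proper and of general type. Since $W \to Y$ is surjective, the image of the composite $W \to X \to X \slash B$ coincides with $Z$, matching the formulation of the corollary and establishing (1). For (2), the surjection $Y \to Z$ with geometrically integral source $Y_\eta$ forces $Z_\eta$ to be geometrically integral by the same scheme-theoretic image argument as above. For (4), the cartesian square identifies $Y \to Z$ with the base change along $Z \hookrightarrow X \slash B$ of the $B_{X \slash B}$-torsor $X \to X \slash B$, giving precisely the desired $B_Z$-torsor structure on $Y \to Z$.

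There is no substantive new obstacle beyond Theorem \ref{relative Ueno fibration} itself; the content of the corollary is a bookkeeping reduction from a finite morphism into $X$ to a closed subscheme of $X$ via the scheme-theoretic image, together with the routine check that geometric integrality of the generic fibre propagates through the surjections that appear.
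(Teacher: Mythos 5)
Your proof is correct and follows exactly the reduction that the paper leaves implicit: replace $W$ by its (scheme-theoretic) image $Y \subset X$, verify the hypotheses of Theorem~\ref{relative Ueno fibration} transfer via flatness of the generic geometric point and integrality of $W_{\bar\eta}$, and then read off the conclusions from the cartesian square and the remark following that theorem. The paper gives no argument beyond displaying the commutative diagram, so your write-up is precisely the bookkeeping the author intended.
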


\[
\begin{tikzcd}
W \arrow[rd] \arrow[r] & Y \arrow[d] \arrow[r, hookrightarrow] & X \arrow[d] \\
  & Z \arrow[r, hookrightarrow]& X \slash B
\end{tikzcd}
\]

%%%%%%%%%%%%%%%%%%%%%%%%%%%%%%%%%%%%%%%%
\subsection{A finiteness result}
 
\begin{thm}\label{A finiteness result}
Let $S$ be a complex algebraic variety. Let $A \to S$ be an abelian scheme. Let $X \to S$ be an $A$-torsor over $S$. For every closed subvariety $Y \subset X$, there exists finitely many triples $(\psi_i \colon T_i \to S, B_i)$, $i \in I$, where 
\begin{itemize}
\item $T_i$ is a complex algebraic variety,
\item $\psi_i$ is a morphism, and
\item $B_i \subset A_{T_i}$ is an abelian subscheme of $A_{T_i}$, 
\end{itemize} 
such that the following holds: for every $s \in S$, every abelian subvariety $C$ of $A_s$ and every $C$-orbit $P \subset X_s$, if $P \subset Y_s$, then $P$ is contained in an orbit of $(B_i)_t$ for some $i \in I$ and some $t \in \psi_i^{-1}(s)$. 
\end{thm}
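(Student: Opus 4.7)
The plan is to proceed by noetherian induction on $\dim Y$, where $S$, $A$, and $X$ are allowed to vary at each stage. The base case $\dim Y = 0$ is immediate: the only abelian subvariety with a zero-dimensional orbit is trivial, so a single triple with $T_i = S$ and $B_i$ the zero section suffices. For the inductive step, I reduce to the case where $Y$ is irreducible (by treating each irreducible component separately), $S$ is irreducible, and $Y \to S$ is dominant (by replacing $S$ with the closure of the image of $Y$). Taking the Stein factorization $Y \to S' \to S$, where $S' \to S$ is finite surjective and $Y \to S'$ has geometrically connected, hence (after shrinking $S'$ and using generic smoothness in characteristic zero) geometrically integral, generic fibre, I base-change the entire setup to $S'$. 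A family of triples $(\psi'_i \colon T'_i \to S', B'_i \subset A_{T'_i})$ over $S'$ yields one over $S$ by composition $T'_i \to S' \to S$, since for every $s \in S$ and any preimage $s' \in S'$ the fibres $A_{s'}$, $X_{s'}$, $Y_{s'}$ coincide with $A_s$, $X_s$, $Y_s$. So we may assume the generic fibre of $Y \to S$ is geometrically integral.

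I then apply Theorem \ref{relative Ueno fibration}: after shrinking $S$ to a dense open $U$, there is an abelian subscheme $B \subset A_U$ such that the image $Z \subset X_U/B$ of $Y_U$ makes $Z \to U$ proper of general type with geometrically integral generic fibre, and $Y_U$ is the full preimage of $Z$ along $X_U \to X_U/B$. I take $(U \hookrightarrow S, B)$ as the first triple in the desired collection: whenever $s \in U$ and $C \subset B_s$ is an abelian subvariety, any $C$-orbit in $Y_s$ is contained in a $B_s$-orbit in $Y_s$, as required.

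The remaining orbits are captured inside two proper closed subvarieties of $Y$ of strictly smaller dimension, to which the induction hypothesis applies. First, when $s \in U$ and $C \not\subset B_s$, the image of $C$ in $A_s/B_s$ is a nontrivial abelian subvariety $C'$, and the projection of the $C$-orbit to $Z_s$ is a $C'$-orbit in $Z_s$, that is, a positive-dimensional translate of an abelian variety contained in $Z_s$. Therefore the orbit lies in the preimage in $Y_U$ of $\overline{\Sp_{ab}(Z/U)}$. By Theorem \ref{key result-closed immersion} applied to the morphism $Z \to U$ of general type, together with the inclusion $\Sp_{ab}(Z/U) \subset \Sp_h(Z/U)$, this closure is a proper closed subset of $Z$, and its preimage in $Y$ is a proper closed subvariety of dimension $< \dim Y$. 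Second, when $s \in S \setminus U$, the orbit lies in $Y_{S \setminus U}$, which has dimension $< \dim Y$ because $Y$ is irreducible and dominates $S$. The induction hypothesis applied to these two proper closed subvarieties yields finitely many additional triples, which together with $(U \hookrightarrow S, B)$ form the finite collection required. The main obstacle is the careful bookkeeping around the Stein-factorization base change and the successive shrinkings, and verifying that each recursive step produces closed subvarieties of strictly smaller dimension so that the induction terminates.
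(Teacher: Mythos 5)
Your overall strategy matches the paper's: induction on $\dim Y$, reduction to $Y$ irreducible dominating an irreducible $S$, reduction to geometrically integral generic fibre, then the relative Ueno fibration (Theorem~\ref{relative Ueno fibration}) producing $B\subset A$ and $Z\subset X/B$ of general type, and finally splitting the orbits into those with $C\subset B_s$ (handled by the single triple given by $B$) and those mapping to a nontrivial translated abelian subvariety of $Z_s$, which land in the preimage of $\overline{\Sp_{ab}(Z/S)}$ and are handled by Theorem~\ref{key result-closed immersion} plus the induction hypothesis. That backbone is correct and is exactly the paper's.

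The gap is in the reduction to a geometrically integral generic fibre. You pass to the Stein factorization $Y\to S'\to S$ and assert that the generic fibre of $Y\to S'$ is geometrically connected, ``hence (after shrinking $S'$ and using generic smoothness in characteristic zero) geometrically integral.'' This does not follow. Geometric connectedness plus irreducibility does not imply geometric irreducibility even in characteristic zero: for an irreducible but non-normal $Y$, the algebraic closure of $k(S')$ inside $K(Y)$ can be a nontrivial extension of $k(S')$ that is \emph{not} captured by $\Gamma(Y,\cO_Y)$, so the generic fibre of the Stein factorization can be geometrically reducible. (Concretely, $Y=V(x^2-ty^2)$, a family of nodal conics over $\bA^1_t$, is irreducible, has Stein factorization the identity on $\bA^1_t$, yet its generic fibre splits over $k(t)(\sqrt t)$.) Generic smoothness cannot save you here because $Y$ is an arbitrary closed subvariety of $X$ and may be singular; shrinking $S'$ does not alter the generic fibre. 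This is precisely why the paper invokes Lemma~\ref{reduce to geometrically integral}, which after a \emph{finite étale} base change $V\to U$ (not merely the Stein factorization) decomposes $Y_V$ into irreducible components whose generic fibres over $V$ are geometrically integral, and then argues that it suffices to treat each component. Replacing your Stein-factorization step with that lemma closes the gap without changing the rest of your argument.

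A small secondary point: after base-changing to $S'$ you say the fibres $Y_{s'}$ ``coincide'' with $Y_s$. They do not; $Y_s$ is the union of the finitely many closed subsets $Y_{s'}$ over $s'\in (S')_s$. The conclusion you want still holds because an orbit $P$ is irreducible and hence lies in a single $Y_{s'}$, but this should be said, not elided.
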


\begin{proof}
The proof goes by induction on the dimension on $Y$, the case where $\dim Y = 0$ being trivial. \\
By decomposing $Y$ as the union of its irreducible components, one sees that it is sufficient to prove the case where $Y$ is irreducible. Since the composition $Y \to X \to S$ is proper, one can replace $S$ by the image of $Y$ and therefore assume that the composition $Y \to X \to S$ is surjective. In particular $S$ is irreducible.\\
One can also freely replace $S$ by a non-empty open subset, since the preimage of the complementary will have smaller dimension. Thanks to Lemma \ref{reduce to geometrically integral} below, there exists a non-empty open subset $U \subset S$ and $V \to U$ a surjective finite étale morphism such that, letting $f^\prime \colon Y_V \to V$ be the base-change, for every irreducible component $E$ of $Y_V$ the generic fibre of $f^\prime_{|E} \colon E \to V$ is geometrically integral. Observe that $\dim E \leq \dim Y$. Clearly, if we know that the conclusion of Theorem \ref{A finiteness result} holds for every $f^\prime_{|E} \colon E \to V$, then it will hold for $Y_U \to U$. Therefore we can assume that the generic fibre of $Y \to S$ is geometrically integral. \\
Thanks to Theorem \ref{relative Ueno fibration}, up to shrinking $S$ again, there exists an abelian subscheme $B \subset A$ such that, if we let $Z$ be the image of $Y$ in $X \slash B$, then 
\begin{enumerate}
\item $Z \to S$ is a proper morphism of general type,
\item the generic fibre of $Z \to S$ is geometrically integral.
\end{enumerate}
If $C$ is an abelian variety contained in a fibre of $Y \to S$, then either it is contained in a fibre of $Y \to Z$, or it is contained in the preimage of $\Sp_{ab}(Z/S) $. In the first case, $C$ is a fortiori contained in a fibre of the $B_{X \slash B}$-torsor $X \to X \slash B$ and we are done. In the second case, since $Z \to S$ is of general type, it follows from Theorem \ref{key result-closed immersion} that $\Sp_{ab}(Z/S)$ is not Zariski-dense in $Z$, hence we are done by induction.
\end{proof}

\begin{lem}\label{reduce to geometrically integral}
Let $f\colon X \to Y$ be a morphism of complex algebraic varieties. Assume that $Y$ is integral. There exists a dense open subset $U \subset Y$ and $V \to U$ a surjective finite étale morphism such that, letting $f^\prime \colon X_V \to V$ be the base-change, for every irreducible component $E$ of $X_V$ the generic fibre of $f^\prime_{|E} \colon E \to V$ is geometrically integral. 
\end{lem}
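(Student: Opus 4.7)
The plan is a spreading-out argument. I will find a finite separable extension $L$ of $K := K(Y)$ so that the irreducible components of $X_\eta \otimes_K L$ become geometrically integral over $L$, then realize $L$ as the function field of a finite étale cover $V \to U$ for suitable $U \subset Y$.

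First, work at the generic point $\eta = \Spec K$ of $Y$. Since $X$ is reduced and the characteristic is zero, $X_\eta$ is geometrically reduced. Let $Z_1, \dots, Z_n$ denote its irreducible components, and for each $i$ let $L_i$ be the algebraic closure of $K$ inside the function field $K(Z_i)$; as $K(Z_i)$ is finitely generated over $K$, each $L_i/K$ is a finite (and, in characteristic zero, separable) extension. Choose a finite Galois extension $L/K$ containing every $L_i$. Then, for each $i$, each irreducible component of $Z_i \otimes_K L$ has function field in which $L$ is algebraically closed, hence is geometrically integral over $L$ by the standard criterion.

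Second, spread out $L$. Let $V_0 \to Y$ be the normalization of $Y$ in $L$; this is a finite surjective morphism with $K(V_0) = L$, and since we are in characteristic zero, it is finite étale over a dense open $U_0 \subset Y$. Shrink $U_0$ to a dense open $U \subset U_0$ over which every irreducible component of $X$ is either removed or dominates $U$, and set $V := V_0 \times_Y U$, a surjective finite étale cover of $U$. The generic fibre of $X_V \to V$ is $X_\eta \otimes_K L$, whose irreducible components are geometrically integral over $L$ by the first step. Since every irreducible component of $X_V = X_U \times_U V$ dominates $V$, its generic fibre is one of those geometrically integral components, which is exactly the claim.

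The only delicate points are the field-theoretic input at the generic fibre (that tensoring up to $L$ turns each $Z_i$ into a union of geometrically integral components) and the final shrinking step, which ensures a clean bijection between the irreducible components of the scheme $X_V$ and those of its generic fibre; the rest of the argument is formal.
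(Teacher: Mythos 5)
Your proof is correct, and the two-step structure (reduce to the generic fibre, then spread out) matches the paper's; the difference is that you re-derive by hand what the paper outsources to the Stacks Project. The paper invokes Tag 0551 to produce a surjective finite étale $V\to U$ making the irreducible components of the generic fibre of $X_V\to V$ geometrically irreducible, Tag 054W to discard the irreducible components of $X_V$ not meeting the generic fibre, and Tag 020I to pass from reduced and geometrically irreducible to geometrically integral in characteristic zero. You instead construct the cover explicitly: take the algebraic closures $L_i$ of $K$ in each $K(Z_i)$, form a Galois compositum $L$, realize $L$ as the function field of the normalization of $Y$ in $L$ restricted to its étale locus, and then shrink to remove non-dominating components. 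Both routes are valid; yours is self-contained and makes the role of the étale cover (killing the algebraic parts $L_i/K$) transparent, whereas the paper's is shorter by deferring to standard references. One small exposition gap worth filling: your shrinking guarantees that every irreducible component of $X_U$ dominates $U$, but the conclusion you need concerns the irreducible components of $X_V$. The bridge is that $X_V\to X_U$ is finite étale, hence sends irreducible components surjectively onto irreducible components, and that $V$ is integral with its generic point the unique point of $V$ above the generic point of $U$, so a component of $X_V$ dominating $U$ automatically dominates $V$; a sentence to this effect would close the argument cleanly.
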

\begin{proof}
Thanks to \cite[\href{https://stacks.math.columbia.edu/tag/0551}{Tag 0551}]{stacks-project}, there exists a non-empty open subset $U \subset Y$ and $V \to U$ a surjective finite étale morphism such that, letting $f^\prime \colon X_V \to V$ be the base-change, all irreducible components of the generic fibre $F$ of $f^\prime$ are geometrically irreducible. Since $X$ is reduced, $X_V$ and $F$ are reduced. Let $F = Z_{1,\eta} \cup \cdots \cup Z_{n,\eta}$ be the decomposition in irreducible components of the generic fibre of $f^\prime$. Let $Z_i$ be the closure of $Z_{i,\eta}$ in $X_V$.
By applying \cite[\href{https://stacks.math.columbia.edu/tag/054W}{Tag 054W}]{stacks-project} to $X \backslash (Z_1 \cup \cdots \cup Z_n)$, there exists a non-empty open subset $V^\prime \subset V$ such that $X_{V^\prime} = (Z_1)_{V^\prime} \cup \cdots \cup (Z_n)_{V^\prime}$. Therefore, up to replacing $V$ by $V^\prime$, and $U$ by the image of $V^\prime$ under the finite étale morphism $V \to U$, we get that for every irreducible component $E$ of $X_V$ the generic fibre of $f^\prime_{|E} \colon E \to V$ is geometrically irreducible and reduced, hence geometrically integral since we are in characteristic zero, cf. \cite[\href{https://stacks.math.columbia.edu/tag/020I}{Tag 020I}]{stacks-project}.
\end{proof}

\begin{cor}\label{entire curves contained in a proper subvariety}
Let $S$ be a complex algebraic variety. Let $A \to S$ be an abelian scheme. Let $X \to S$ be an $A$-torsor over $S$. Let $E$ be a relative effective Cartier divisor in $X \slash S$.  Then there exists finitely many pairs $(\psi_i \colon T_i \to S, B_i)$, $i \in I$, where 
\begin{itemize}
\item $T_i $ is an irreducible complex algebraic variety,
\item $\psi_i$ is a morphism,
\item $B_i \subsetneq A_{T_i}$ is an abelian subscheme of $A_{T_i}$, 
\end{itemize} 
such that for every $s \in S$ and every non-constant holomorphic map $f \colon \bC \to X_s$ whose image is contained \textit{up to translation} in $E_s$, the image of $f$ is contained in an orbit of $(B_i)_t$ for some $i \in I$ and some $t\in \psi_i^{-1}(s)$.
\end{cor}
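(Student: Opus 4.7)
The plan is to deduce this corollary directly from Theorem \ref{A finiteness result} applied to the reduced subvariety underlying $E$, combined with the classical theorem of Bloch--Ochiai--Kawamata on Zariski closures of entire curves in complex abelian varieties.

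First, I would apply Theorem \ref{A finiteness result} to the closed subvariety $Y := E_{\mathrm{red}} \subset X$ to produce finitely many triples $(\psi_i \colon T_i \to S, B_i)$ with $B_i \subset A_{T_i}$ an abelian subscheme. A careful look at the inductive proof of Theorem \ref{A finiteness result} shows that since $Y \subsetneq X$ (as $E$ is a relative effective Cartier divisor, so each fibre $E_s$ is a proper subset of $X_s$), each such $B_i$ is automatically strictly smaller than $A_{T_i}$: indeed, the $B_i$'s arise either as identity components of normalizers of proper subvarieties (which cannot equal the full abelian scheme, since the action of $A$ on fibres of $X \to S$ is transitive), or, after an Ueno-type quotient, as pullbacks of such subschemes along a strict quotient.

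Next, fix $s \in S$, $a \in A_s$, and a non-constant holomorphic map $f \colon \bC \to X_s$ whose image lies in the translate $E_s + a$. Setting $g(t) := f(t) - a$ produces a non-constant entire curve $g \colon \bC \to X_s$ with image in $E_s$. By the Bloch--Ochiai--Kawamata theorem (applied after choosing a trivialization of the $A_s$-torsor $X_s$), the Zariski closure $\overline{g(\bC)}$ is a translate $p + C$ of a positive-dimensional abelian subvariety $C \subset A_s$. Since $p + C$ is then a $C$-orbit in $X_s$ contained in $Y_s$, Theorem \ref{A finiteness result} supplies $i \in I$ and $t \in \psi_i^{-1}(s)$ such that $p + C$ lies in some $(B_i)_t$-orbit; translating back by $a$, the image $f(\bC) = g(\bC) + a$ is also contained in an orbit of $(B_i)_t$, which is what we wanted.

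I expect the only subtle point in this argument to be the verification that the $B_i$ produced by Theorem \ref{A finiteness result} automatically satisfy $B_i \subsetneq A_{T_i}$---a property which, while not stated explicitly in the conclusion of that theorem, is visible from its proof whenever the input subvariety is proper in $X$. Everything else is a direct assembly of Theorem \ref{A finiteness result} with the classical Bloch--Ochiai--Kawamata theorem, together with the mild bookkeeping needed to pass between $f$ and its translate $g$.
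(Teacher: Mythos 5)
Your proposal follows essentially the same argument as the paper: translate so that the curve lands in $E_s$, invoke Bloch--Ochiai--Kawamata to replace the curve by the $C$-orbit that is its Zariski closure, and feed the result to Theorem~\ref{A finiteness result}. You also correctly flag the one genuine issue the paper's one-line proof glosses over, namely that Theorem~\ref{A finiteness result} only asserts $B_i \subset A_{T_i}$ while the corollary needs $B_i \subsetneq A_{T_i}$; your justification (the $B_i$ produced by the induction come from normalizers of subvarieties that remain proper in the fibres of $X$, and since $A$ acts transitively on the fibres of $X \to S$, a full normalizer would force $Y_s = X_s$) is exactly the right observation and completes the argument.
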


\begin{proof}
Indeed, the Zariski closure of $f(\bC)$ is an orbit of an abelian variety $C \subset A_s$ by the Bloch-Ochiai Theorem \cite[Theorem 3]{Kawamata-Bloch}, which is contained up to translation in a fibre of $E \to S$. Hence one can apply Theorem \ref{A finiteness result}.
\end{proof}

%%%%%%%%%%%%%%%%%%%%%%%%%%%%%%%%%%%%%%%%%%%%%%%%%%%%%%%%%%%%%%%%%%%%%%%%%%%%%%%%%%%%%%%%%%%%%%%%%

\section{A truncated Second Main Theorem for torsors over an abelian scheme}

The goal of this section is to explain a proof of Theorem \ref{truncated SMT for abelian schemes}.

%%%%%%%%%%%%%%%%%%%%%%%%%%%%%%%%%%%%%%%%%%
\subsection{Notations}\label{Notations Nevanlinna}

For every positive real number $r$, we set $\Delta(r) := \{ z \in \bC \, | \, |z| < r \}$. If $\alpha$ is a current of type $(1,1)$ on $\bC$, one defines its \emph{Nevanlinna characteristic function} by
\[T(r, \alpha)=\int_1^r\left(\int_{\Delta(t)}\alpha\right)\frac{dt}{t}.\]

If $X$ is a smooth proper complex algebraic variety, $L$ a line bundle on $X$ and $f\colon \bC \to X$ a non-constant holomorphic map, then one defines the \emph{characteristic function of $f$ with respect to $L$} to be the function
\[T(r,f,L):=T(r, f^*C_1\left(L,h\right)).\]
Here $C_1(L,h)$ is the first Chern form of $L$ with respect to a $\mathscr{C}^{\infty}$ metric $h$ on $L$ (the notation is somewhat abusive since the function $T \left(r,f,L \right)$ depends on the choice of the metric $h$, but only up to a bounded function, see \cite[Lemma 3.1]{Yamanoi-lectures}). If $D$ is a Cartier divisor on $X$, we let $T (r, f, D) := T (r, f, \cO_X (D)) $.\\

If $X$ is a complex algebraic variety, $Z \subset X$ a (possibly non-reduced) closed analytic subspace and $f\colon \bC \to X$ a non-constant holomorphic map such that $f(\bC)\not\subset Z$, then one defines the \emph{counting function of $f$ with respect to $Z$} to be the function
\[ N(r,f,Z):=\int_1^r\left(\sum_{z \in \Delta(t)}  \mathrm{ord}_z \left(f^{-1}\left(Z\right) \right) \right)\frac{dt}{t}.\]
Here $f^{-1}\left(Z\right) $ denotes the (possibly non-reduced) inverse image of $Z$. \\

For every integer $k \geq 1$, we define the \emph{k-th truncated counting function with respect to $Z$} by
\[ N^{(k)}(r,f,Z):=\int_1^r\left(\sum_{z \in \Delta(t)}  \min \left( k , \mathrm{ord}_z \left(f^{-1}\left(Z\right) \right) \right) \right)\frac{dt}{t}.\]

\subsection{Auxiliary results}
For the reader convenience, we recall some results that we will use in the sequel.
\begin{thm}[Nevanlinna inequality, see {\cite[Corollary 3.3]{Yamanoi-lectures}}]\label{Nevanlinna's inequality}
Let $X$ be a smooth projective complex algebraic variety and let $D$ be an effective Cartier divisor on $X$. Let $f \colon \bC \to X$ be a holomorphic curve such that $f (\bC) \not \subset  D$. Then
\[ N (r, f, D) \leq T (r, f, D) + O(1). \]
In particular, 
\[T (r, f, D) + O(1) \geq 0.\]
\end{thm}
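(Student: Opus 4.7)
The statement is a classical Nevanlinna-theoretic inequality (essentially the ``upper half'' of the First Main Theorem). My plan is to recover it by applying the Poincar\'e-Lelong formula to the canonical section of $\cO_X(D)$ pulled back by $f$, integrating twice against the standard Nevanlinna kernel, and then using compactness of $X$ to bound the resulting boundary term from below.

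Concretely, I would first fix a smooth Hermitian metric $h$ on $L := \cO_X(D)$ with first Chern form $C_1(L,h)$, and let $s \in \HH^0(X,L)$ be the canonical section whose divisor equals $D$. Since $f(\bC) \not\subset D$, the composition $s \circ f$ is a nonzero holomorphic section of $f^*L$, and $\log|s\circ f|_h^2$ is locally integrable on $\bC$ with isolated logarithmic singularities supported on the discrete analytic set $f^{-1}(D)$. The Poincar\'e-Lelong formula then yields the current identity
\[ dd^c \log|s\circ f|_h^2 \;=\; [f^{-1}(D)] \;-\; f^* C_1(L,h) \]
on $\bC$.

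Next, I would integrate this identity over $\Delta(t)$, apply Stokes' theorem, and then integrate the result against $dt/t$ from $1$ to $r$. This routine manipulation produces the Nevanlinna-theoretic Jensen formula
\[ T(r,f,D) \;=\; N(r,f,D) \;+\; m(r,f,D) \;-\; m(1,f,D), \]
where $m(r,f,D) := \int_0^{2\pi} \log \bigl( 1/|s(f(re^{i\theta}))|_h \bigr) \, d\theta/(2\pi)$ is the proximity function of $f$ with respect to $D$.

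To finish, I would bound the proximity term from below. Because $X$ is compact and $|s|_h$ is continuous on $X$, there exists a constant $C$ with $|s|_h \leq C$ on $X$, whence $m(r,f,D) \geq -\log C$ for every $r$; the number $m(1,f,D)$ is a fixed real constant, so the difference $m(r,f,D) - m(1,f,D)$ is bounded below. Rearranging Jensen's formula yields $N(r,f,D) \leq T(r,f,D) + O(1)$, and the second assertion then follows from the trivial lower bound $N(r,f,D) \geq 0$. No step here is a genuine obstacle; the only mild subtlety is justifying the distributional Poincar\'e-Lelong identity for $\log|s\circ f|_h^2$, which is standard and reduces locally to the identity $\partial\bar\partial \log|g|^2 = \pi\,[g^{-1}(0)]$ (up to convention-dependent constants) for a holomorphic function $g$ of one complex variable.
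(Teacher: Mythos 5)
Your proof is correct. The paper does not prove this theorem; it cites it directly to Yamanoi's lecture notes, and the argument there is exactly the classical one you reconstruct: the Poincar\'e--Lelong formula for $\log|s\circ f|_h^2$, the Green--Jensen identity to produce the First Main Theorem $T = N + m + O(1)$, and compactness of $X$ to bound the proximity function $m$ from below via $|s|_h \leq C$. One small normalization slip, which you already flagged as a possibility: with $m(r,f,D)$ defined via $\log(1/|s|_h)$ and the Poincar\'e--Lelong identity stated for $\log|s\circ f|_h^2$, the Green--Jensen formula actually yields $T = N + 2m + O(1)$ rather than $T = N + m + O(1)$; this factor of two is harmless since the only use of the identity is the lower bound $m \geq -\log C$, so the conclusion $N \leq T + O(1)$ is unchanged.
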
 

%%%%%%%%%%%%%%%%%%%%%%%%%
\begin{comment}
\begin{lem}[see {\cite[Lemma 3.11]{Yamanoi-lectures}}]\label{relations between counting functions}
Let $X$ be a smooth projective complex variety and $Z \subset V$ be a closed subscheme. Let $f \colon \bC \to X$ be a non-constant holomorphic map such that $f(\bC) \not \subset Z$. Then, for every positive integer $k$, we have:
\[ N (r, f, Z)  =  N (r, J_k(f), J_k Z) +  N^{(k)}(r, f, Z) . \] 
\end{lem}
\end{comment}
%%%%%%%%%%%%%%%%%%%%%%%%%%%%%%%%%%%%%%%%%%

\begin{prop}
If $X$ is a complex algebraic variety, $Z \subset Y \subset X$ two closed subschemes and $f\colon \bC \to X$ a non-constant holomorphic map such that $f(\bC)\not\subset Y$, then
\[ N (r, f, Z)  \leq    N (r, f, Y)  . \] 
\end{prop}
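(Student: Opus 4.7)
The plan is to reduce the claimed integral inequality to a pointwise inequality of orders of vanishing at every point $z\in\bC$, and then integrate. Since $N(r,f,Z)$ and $N(r,f,Y)$ are defined as averages over $t\in(1,r)$ of sums of local contributions $\mathrm{ord}_z(f^{-1}(Z))$ and $\mathrm{ord}_z(f^{-1}(Y))$ respectively, it suffices to prove that for every $z\in\bC$ one has
\[ \mathrm{ord}_z\!\left(f^{-1}(Z)\right) \leq \mathrm{ord}_z\!\left(f^{-1}(Y)\right). \]

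To establish this, I would first note that the hypothesis $f(\bC)\not\subset Y$ together with $Z\subset Y$ implies $f(\bC)\not\subset Z$, so both counting functions are well-defined. The inclusion $Z\subset Y$ of closed subschemes of $X$ corresponds to an inclusion of coherent ideal sheaves $\cI_Y\subset \cI_Z$ in $\cO_X$. Pulling back via $f$, we obtain the inclusion $f^{-1}(\cI_Y)\cdot\cO_\bC \subset f^{-1}(\cI_Z)\cdot\cO_\bC$ of ideal sheaves on $\bC$ defining $f^{-1}(Y)$ and $f^{-1}(Z)$ respectively.

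At any point $z\in\bC$, the order $\mathrm{ord}_z(f^{-1}(Z))$ is by definition the largest non-negative integer $k$ such that the stalk of the defining ideal is contained in $\mathfrak{m}_z^k$, where $\mathfrak{m}_z\subset \cO_{\bC,z}$ is the maximal ideal. Since the ideal defining $f^{-1}(Y)$ at $z$ is contained in the ideal defining $f^{-1}(Z)$ at $z$, the former lies in a deeper power of $\mathfrak{m}_z$, which yields precisely $\mathrm{ord}_z(f^{-1}(Z))\leq \mathrm{ord}_z(f^{-1}(Y))$. Summing over $z\in\Delta(t)$ and integrating the resulting inequality against $dt/t$ on $(1,r)$ gives the claim.

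There is no real obstacle here: once the definitions are unfolded, the statement reduces to the pointwise monotonicity of vanishing orders with respect to inclusion of ideals, which is a purely local and elementary fact. The only point to be careful about is the convention for $\mathrm{ord}_z$ when the defining ideal of $f^{-1}(Z)$ happens to be generated by several functions — in that case, the order is the minimum of the orders of a generating set, equivalently the largest $k$ for which the ideal lies in $\mathfrak{m}_z^k$, and the argument above is unchanged.
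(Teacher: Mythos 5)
Your argument is correct and follows exactly the same route as the paper, which dispatches the proposition in one line from the inclusion $f^{-1}(Z)\subset f^{-1}(Y)$ of closed subschemes of $\bC$; you have simply unfolded that inclusion into the pointwise inequality of vanishing orders and integrated.
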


\begin{proof}
This follows immediately from the inclusions
$f^{-1}(Z) \subset f^{-1}(Y) \subset \bC$.
\end{proof}

\begin{prop}
Let $X$ be a complex algebraic variety and $Z \subset X$ be a closed subscheme. Let $f \colon \bC \to X$ be a non-constant holomorphic map such that $f(\bC) \not \subset Z$. If $f(\Delta_k) \subset Z$ for some positive integer $k$ (where $\Delta_k$ is seen as a closed analytic subspace of $\bC$), then $\mathrm{ord}_0 \left(f^{-1} \left(Z \right)\right) \geq k + 1$.
\end{prop}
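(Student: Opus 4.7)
The statement is essentially a tautology once the definitions are unpacked, so the plan is simply to translate the scheme-theoretic hypothesis into an inclusion of ideals in a DVR.

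The plan is to work locally at $0 \in \bC$. Pick an affine open neighborhood $\Spec(A)$ of $f(0)$ in $X$ and let $I \subset A$ be the ideal defining $Z$ on this chart. The restriction of $f$ to a small disc around $0$ corresponds to a $\bC$-algebra homomorphism $f^\ast \colon A \to \cO_{\bC,0}^{an}$ (or equivalently, after completion, into $\bC[[t]]$). By definition, the closed analytic subspace $f^{-1}(Z)$ is cut out near $0$ by the ideal $J := f^\ast(I) \cdot \cO_{\bC,0}^{an}$.

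Next I would unpack the hypothesis $f(\Delta_k) \subset Z$. The closed immersion $\Delta_k \hookrightarrow \bC$ corresponds on functions to the surjection $\cO_{\bC,0}^{an} \twoheadrightarrow \bC[t]/(t^{k+1})$, so the restriction $f|_{\Delta_k} \colon \Delta_k \to X$ factors through $Z \hookrightarrow X$ exactly when the composite $A \xrightarrow{f^\ast} \cO_{\bC,0}^{an} \twoheadrightarrow \bC[t]/(t^{k+1})$ kills $I$. Equivalently, $f^\ast(I) \subset (t^{k+1}) \cdot \cO_{\bC,0}^{an}$, hence $J \subset (t^{k+1})$.

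Finally, since $\cO_{\bC,0}^{an}$ is a discrete valuation ring with uniformizer $t$, the ideal $J$ is either zero (which is excluded by the assumption $f(\bC) \not\subset Z$, so that $f^\ast$ does not kill $I$) or of the form $(t^n)$ for a unique integer $n \geq 0$; this integer is by definition $\mathrm{ord}_0(f^{-1}(Z))$. The inclusion $(t^n) \subset (t^{k+1})$ forces $n \geq k+1$, which is the desired inequality. There is no real obstacle; the only thing to be careful about is distinguishing the algebraic and analytic local rings at $0$, but both are DVRs with uniformizer $t$ and the argument goes through identically in either setting.
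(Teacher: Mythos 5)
Your proof is correct. The paper states this proposition without proof (it appears under the subsection ``Auxiliary results,'' which the author introduces as a recollection of standard facts), and your reduction to the discrete valuation ring $\cO_{\bC,0}^{an}$ with the inclusion $f^{\ast}(I)\cdot\cO_{\bC,0}^{an}\subset (t^{k+1})$ is the natural and standard way to unpack the definitions.
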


%%%%%%%%%%%%%%%%%%%%%%%%%%%%%%%%%%%%%%%%%%%%%%%%%%%%%%%%%%%%%%%%ù
\subsection{A Second Main Theorem for torsors over an abelian scheme}

We first prove a Second Main Theorem for torsors over an abelian scheme, building on a strategy explained in \cite{Yamanoi-lectures}. 

\begin{thm}\label{SMT for abelian schemes}
Let $S$ be a complex algebraic variety. Let $A \to S$ be an abelian scheme. Let $X \to S$ be an $A$-torsor over $S$. Let $L \to X$ be a line bundle which is relatively ample over $S$. Let $D \subset X$ be a relative effective Cartier divisor. Assume that $D_s$ is integral for every $s \in S$. \\
Then, there exists an integer $\rho \geq 1$ such that, for every $s\in S$ and every orbit $f \colon \bC \to X_s$ of a one-parameter subgroup of $A_s$, one has either that
\begin{enumerate}
    \item the image of a translate of $f$ is contained in $D_s$, or
    \item $T (r, f, D_s) \leq  N^{(\rho)}(r, f, D_s) +o\left(T \left(r,f,L_s \right)\right) ||$.
\end{enumerate}
\end{thm}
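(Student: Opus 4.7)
The plan is to adapt Yamanoi's strategy for the absolute SMT on abelian varieties \cite{Yamanoi-lectures} to the relative setting, exploiting the fact that for orbits of one-parameter subgroups the relevant jet lifts are governed by a single element of $\Lie^k(A_s)$ and therefore lie on a natural ``slice'' of the jet scheme. First, by \'etale base change on $S$ (which leaves Nevanlinna functions essentially unchanged) I would reduce to the case where the torsor $X \to S$ is trivial, so that $X \simeq A$ as $S$-schemes. Passing to irreducible components of $D$ and shrinking $S$, I may also assume that $D \to S$ is a relative Cartier divisor with geometrically integral fibers. The case $f(\bC) \subset D_s$ falls under case~(1) of the theorem, so we may assume $f(\bC) \not\subset D_s$.

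Fix a positive integer $k$. Using the canonical decomposition $J_k(X/S) \simeq X \times_S \Lie^k(A/S)$ from \parref{jets_properties} and the canonical sections $\sigma_k$ constructed in \parref{Canonical sections of the forget maps}, a direct computation shows that for any $s \in S$ and any orbit $f \colon \bC \to X_s$ of a one-parameter subgroup with velocity $v \in \Lie(A_s)$, the $k$-jet lift equals $J_k(f)(z) = (f(z), \sigma_k(v))$. Hence $J_k(f)(\bC)$ lies in the slice $X_s \times \{\sigma_k(v)\}$, canonically identified with $X_s$. Writing $D_{k,v} := J_k(D)_s \cap (X_s \times \{\sigma_k(v)\})$, a closed subscheme of $X_s$ contained in $D_s$, this identifies $N(r, J_k(f), J_k(D)_s) = N(r, f, D_{k,v})$. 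Moreover, the forget map $\Lie^{k+1}(A_s) \to \Lie^k(A_s)$ sends $\sigma_{k+1}(v)$ to $\sigma_k(v)$, so the $D_{k,v}$ form a descending chain in $X_s$, which stabilizes by Noetherianity.

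The standard Nevanlinna decomposition $N(r, f, D_s) = N^{(k)}(r, f, D_s) + N(r, J_k(f), J_k(D)_s)$ combined with the First Main Theorem $T(r,f,D_s) = N(r,f,D_s) + m(r,f,D_s) + O(1)$ reduces the claim to bounding both the proximity function $m(r,f,D_s)$ and the counting function $N(r,f,D_{k,v})$ by $o(T(r,f,L_s))$ for a suitable $k = \rho$. The proximity bound $m(r,f,D_s) = o(T(r,f,L_s))$ is a relative Logarithmic Derivative Lemma for abelian schemes, which I would verify fiberwise following the arguments of \cite{Yamanoi-lectures} applied to a Hermitian metric on $L$ that is compatible with the $A$-action.

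To bound $N(r, f, D_{\rho, v})$, I would combine the Noetherian stabilization with \parref{A finiteness result}: the latter provides, uniformly in $(s, v)$, an integer $\rho$ past which the chain is stationary, together with a finite list of candidate abelian subschemes $B_i \subset A_{T_i}$ such that either the stable subscheme $D_{\rho,v}$ is a union of $(B_i)_t$-orbits for some $i$ and some $t \in T_i$ lying over $s$ — in which case the orbit of the one-parameter subgroup is trapped in one of these translates, forcing $f$ (up to translation) into $D_s$ by the Bloch-Ochiai-Kawamata theorem applied fiberwise, which is case~(1) — or $D_{\rho,v}$ has codimension $\geq 2$ in $X_s$. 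In the latter case, an auxiliary Cartier divisor containing $D_{\rho,v}$ of bounded degree with respect to $L$, together with Nevanlinna's inequality and induction on $\dim D$, yields the required estimate. The principal obstacle is precisely the uniform control of constants as $(s,v)$ ranges over $S \times \Lie(A/S)$, and the merging of the finitely many exceptional Borel sets into a single one compatible with the symbol $||$; it is here that the uniform nature of \parref{A finiteness result} is essential, and exactly the feature that is absent in Yamanoi's absolute treatment.
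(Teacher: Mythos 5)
Your opening observations are on target and match the first part of the paper's argument: for a one-parameter orbit $f$ with velocity $v$, the $k$-jet lift stays on the slice $X_s\times\{\sigma_k(v)\}$, so the relevant datum is the single point $\sigma_\rho(v)\in\Lie^\rho(A_s)$, and Noetherianity of the descending chain of pullbacks $W_k\subset J_1(X/S)$ gives a single $\rho$ that works simultaneously for all $s$ and all $v$. This is precisely Proposition~\ref{Proposition-uniformity-jets} and Corollary~\ref{Corollary-uniformity-jets}. From there, however, the paper's route is both shorter and different from yours: if $p_\rho(J_\rho(f)(0))\notin p_\rho(J_\rho(D/S))$, one simply picks a regular function on $\Lie^\rho(A_s)$ vanishing on $p_\rho(J_\rho(D_s))$ but not at $p_\rho(J_\rho(f)(0))$, pulls it back to a jet differential $\omega\in \HH^0(J_\rho(X_s),\cO)$ that kills $J_\rho(D_s)$ but not $J_\rho(f)(0)$, and then invokes Yamanoi's \emph{fiberwise} half-SMT (Theorem~\ref{half SMT}) as a black box. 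That theorem already contains the proximity estimate, the logarithmic-derivative lemma, and the level-$\rho$ truncation; and since the $o\left(T(r,f,L_s)\right)$ in case~(2) is allowed to depend on $f$, no uniformity in the Nevanlinna constants is needed beyond the single integer $\rho$. In particular, neither Theorem~\ref{A finiteness result} nor the codimension-$\geq 2$ machinery (Theorem~\ref{codim 2}) enters the proof of this statement; in the paper they are reserved for the later passage from $N^{(\rho)}$ to $N^{(1)}$ in Theorem~\ref{truncated SMT for abelian schemes}.

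Beyond this divergence of route, your proposal has two genuine gaps. First, the uniform $\rho$ does \emph{not} come from Theorem~\ref{A finiteness result}, as you assert; it comes from Noetherianity of the scheme $J_1(X/S)$ together with the canonical sections $\sigma_k$, and conflating the two misrepresents what each tool does. Second, after writing $N(r,f,D_s)=N^{(\rho)}(r,f,D_s)+N(r,J_\rho(f),J_\rho(D)_s)$ and the First Main Theorem, you still need $N(r,f,D_{\rho,v})=o\left(T(r,f,L_s)\right)$, and your proposed argument for this is not sound. The asserted dichotomy---either $D_{\rho,v}$ is swept out by abelian orbits detected by Theorem~\ref{A finiteness result}, or $\codim(D_{\rho,v},X_s)\geq 2$---is unjustified: $D_{\rho,v}$ can be a proper closed subscheme of $D_s$ of any codimension, and Theorem~\ref{A finiteness result} says nothing about its codimension. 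Furthermore, even granting $\codim\geq 2$, Nevanlinna's inequality only gives $N(r,f,E)\leq T(r,f,E)+O(1)$ for an auxiliary divisor $E\supset D_{\rho,v}$; to get an $o(T(r,f,L_s))$ (or $\epsilon\cdot T$) bound you must produce $E$ of very small degree relative to the jet thickening depth, which is the content of a nontrivial dimension-count (the paper's Proposition~\ref{Auxiliary divisor}); you cannot wave this away with ``an auxiliary Cartier divisor of bounded degree.'' As written, your proposal amounts to re-proving Yamanoi's Theorem~\ref{half SMT} from scratch with the hard steps left implicit, whereas the paper's point is that the Noetherian uniformity reduces the relative statement cleanly to that absolute fiberwise result.
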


The proof of Theorem \ref{SMT for abelian schemes} is given in the remainder of this section.

\begin{prop}\label{Proposition-uniformity-jets}
Let $S$ be a complex algebraic variety. Let $A \to S$ be an abelian scheme. Let $X \to S$ be an $A$-torsor over $S$. Let $W \subset X$ be a closed subscheme. Then, there exists an integer $\rho \geq 1$ such that for every $s\in S$ and every orbit $f \colon \bC \to X_s$ of a one-parameter subgroup of $A_s$, if $J_\rho (f)(0) \in J_\rho (W/S)$, then $f(\bC) \subset W$.
\end{prop}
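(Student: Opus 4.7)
The plan is to reduce the assertion to noetherian stabilization of a descending chain of closed subschemes in a fixed parameter space, followed by an analytic identity principle. Let $P := \Lie(A/S) \times_S X$, which is a scheme of finite type over $\bC$ and hence noetherian. A $\bC$-point $(v,x) \in P$ lying over $s \in S$ determines the orbit $f_{v,x} \colon \bC \to X_s$, $t \mapsto \exp(tv) \cdot x$; conversely every orbit of a one-parameter subgroup arises this way.

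First I would construct the family of jets. Using the canonical section $\sigma_k \colon \Lie(A/S) \to \Lie^k(A/S) \subset J_k(A/S)$ from \S\ref{Canonical sections of the forget maps}, the zero section $0_k \colon X \to J_k(X/S)$, and the action of the $S$-group scheme $J_k(A/S)$ on the $J_k(A/S)$-torsor $J_k(X/S)$, define the algebraic $S$-morphism
\[ \phi_k \colon P \longrightarrow J_k(X/S), \qquad (v,x) \longmapsto \sigma_k(v) \cdot 0_k(x). \]
Since $\sigma_k(v)$ is by construction the $k$-jet at the origin of the one-parameter subgroup $t \mapsto \exp(tv)$, one checks at every $\bC$-point that $\phi_k(v,x) = J_k(f_{v,x})(0)$. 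Moreover, the compatibility of the $\sigma_k$'s and of the zero sections with the forget maps $\pi_{k+1,k}$ yields $\pi_{k+1,k}\circ \phi_{k+1} = \phi_k$.

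Next, set $Z_k := \phi_k^{-1}(J_k(W/S))$, a closed subscheme of $P$. As $J_{k+1}(W/S) \subset \pi_{k+1,k}^{-1}(J_k(W/S))$ as closed subschemes of $J_{k+1}(X/S)$, we obtain a descending chain of closed subschemes $Z_{k+1} \subset Z_k$; equivalently the ideal sheaves $\mathcal{I}_{Z_k} \subset \mathcal{O}_P$ form an ascending chain, which must stabilize on the noetherian scheme $P$. Hence there exists $\rho \geq 1$ with $Z_\rho = Z_k$ for every $k \geq \rho$, and this $\rho$ will be the desired uniform integer.

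It remains to upgrade from formal to analytic. If $(s,v,x) \in Z_\rho$ is a $\bC$-point, then $(s,v,x)$ lies in every $Z_k$, so the formal morphism $\Spec \bC[\![t]\!] \to X_s$ induced by $f_{v,x}$ factors through $W_s$. Choosing local algebraic defining equations $g_1,\ldots,g_r$ of $W_s$ on an affine neighborhood of $f_{v,x}(0)$, the Taylor series of each $g_i \circ f_{v,x}$ at $0$ vanishes, so $g_i \circ f_{v,x}$ is identically zero on a complex-analytic neighborhood of $0$ in $\bC$. The closed set $f_{v,x}^{-1}(W_s) \subset \bC$ therefore has non-empty interior; its interior is also closed, because at any boundary point one applies the one-variable identity theorem to local holomorphic defining functions of $W_s$. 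Connectedness of $\bC$ forces $f_{v,x}(\bC) \subset W_s$, which is the desired conclusion. The main technical point is the uniform-in-$S$ construction of the family $\phi_k$ via the canonical sections for the abelian scheme $A$; once this is in place, the noetherian stabilization and the identity principle are routine.
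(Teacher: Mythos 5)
Your proof is correct and follows essentially the same route as the paper: your parameter space $P = \Lie(A/S)\times_S X$ is precisely $J_1(X/S)$, your $\phi_k$ coincides with the paper's section $\mathrm{Id}\times\sigma_k \colon J_1(X/S)\to J_k(X/S)$, and the noetherian stabilization of $Z_k = W_k$ is the same argument. The only difference is that you spell out the final passage from "all jets at $0$ lie in $J_k(W/S)$" to "$f(\bC)\subset W$" via the one-variable identity theorem, a step the paper treats as immediate.
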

\begin{proof}
For every integer $k \geq 1$, we have a commutative diagram
\[
\begin{tikzcd}
J_k (W /S) \arrow[r, hookrightarrow] \arrow[d] & J_k (X /S) = X \times_S \Lie^k (A /S) \arrow[d]  \\
J_1 (W /S)  \arrow[r, hookrightarrow] & J_1 (X /S) = X \times_S \Lie (A /S).
\end{tikzcd}
\]

As in section \ref{Canonical sections of the forget maps}, let $\sigma_k \colon \Lie (A /S) \to \Lie^k (A/S)$ denote the canonical section of the natural projection $\Lie^k (A/S) \to \Lie (A/S)$, so that one has an induced section $\mathrm{Id} \times \sigma_k \colon J_1 (X/S) \to J_k (X/S) $ of the natural projection $J_k (X/S) \to J_1 (X/S) $. Every point of $J_1 (X/S) $ corresponds canonically to an orbit $f \colon \bC \to X_s$ of a one-parameter subgroup of $A_s$, and $\mathrm{Id} \times \sigma_k$ sends $f$ to the $k$-jet $J_k(f)(0)$. \\
Write $W_k := \left( \mathrm{Id} \times \sigma_k \right)^{-1} (J_k (W/S))$, so that the $W_k$'s form a descending chain of closed subschemes $W_{k+1} \subset W_k \subset \ldots \subset J_1 (X/S)$. By Noetherianity, there exists an integer $\rho \geq 1$ such that $W_k = W_\rho$ for any $k\geq \rho$.\\
By construction, for every $s\in S$, if an orbit $f \colon \bC \to X_s$ of a one-parameter subgroup of $A_s$ satisfies $J_\rho (f) (0) \in J_\rho (W/S)$, then $J_k (f) (0) \in J_k (W/S)$ for every integer $k \geq 1$, so that $f(\bC) \subset W$.
\end{proof}

\begin{cor}\label{Corollary-uniformity-jets}
Let $S$ be a complex algebraic variety. Let $A \to S$ be an abelian scheme. Let $X \to S$ be an $A$-torsor over $S$. Let $W \subset X$ be a closed subscheme. Then, there exists an integer $\rho \geq 1$ such that, for every $s\in S$ and every orbit $f \colon \bC \to X_s$ of a one-parameter subgroup of $A_s$, if $p_\rho \left( J_\rho (f) (0) \right) \in p_\rho \left( J_\rho (W/S) \right)$, then a translate $f_a$ of $f$ satisfies $f_a(\bC) \subset W_s$.
\end{cor}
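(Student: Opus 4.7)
The plan is to deduce the corollary directly from Proposition \ref{Proposition-uniformity-jets} by using that translating an orbit of a one-parameter subgroup moves only the base point of the associated $\rho$-jet, leaving the ``Lie part'' unchanged. I take $\rho$ to be the integer supplied by Proposition \ref{Proposition-uniformity-jets} applied to $W$, and I interpret $p_\rho$ as the projection $J_\rho(X/S) = X \times_S \Lie^\rho(A/S) \to \Lie^\rho(A/S)$ coming from the splitting for torsors recalled just before section \ref{Canonical sections of the forget maps}.

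For an orbit $f \colon \bC \to X_s$ of the one-parameter subgroup associated to $\xi \in \Lie(A_s)$, the construction of the canonical sections $\sigma_k$ in section \ref{Canonical sections of the forget maps} gives $J_\rho(f)(0) = (f(0), \sigma_\rho(\xi))$, so $p_\rho(J_\rho(f)(0)) = \sigma_\rho(\xi)$. For every $a \in A_s$, the translated orbit $f_a(z) := f(z) + a$ is again an orbit of the same one-parameter subgroup, so $J_\rho(f_a)(0) = (f(0) + a, \sigma_\rho(\xi))$; as $a$ varies, the first coordinate sweeps out the entire fibre $X_s$ while $p_\rho(J_\rho(f_a)(0)) = \sigma_\rho(\xi)$ remains fixed.

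Under the hypothesis $p_\rho(J_\rho(f)(0)) \in p_\rho(J_\rho(W/S))$, I pick $w \in J_\rho(W/S)$ with $p_\rho(w) = \sigma_\rho(\xi)$. Since $p_\rho$ is a morphism of $S$-schemes, $w$ lies over $s$, and so $w = (x_0, \sigma_\rho(\xi))$ for some $x_0 \in W_s$. Setting $a := x_0 - f(0) \in A_s$, the translate $f_a$ satisfies $J_\rho(f_a)(0) = (x_0, \sigma_\rho(\xi)) = w \in J_\rho(W/S)$, and Proposition \ref{Proposition-uniformity-jets} yields $f_a(\bC) \subset W_s$.

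The argument is essentially immediate once the splitting is identified: the Noetherianity step of Proposition \ref{Proposition-uniformity-jets} does all the real work, and the only minor point to verify is that the Lie part of the $\rho$-jet of an orbit of a one-parameter subgroup is invariant under translations by $A_s$, which is built into the splitting $J_\rho(X/S) = X \times_S \Lie^\rho(A/S)$ and the definition of the canonical sections $\sigma_k$.
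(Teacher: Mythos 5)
Your proof is correct and follows the same approach as the paper: the paper simply asserts the equivalence "$p_\rho(J_\rho(f)(0)) \in p_\rho(J_\rho(W/S))$ iff some translate $f_a$ satisfies $J_\rho(f_a)(0) \in J_\rho(W/S)$" and then invokes Proposition \ref{Proposition-uniformity-jets}, whereas you unpack the relevant direction of this equivalence using the splitting $J_\rho(X/S) = X \times_S \Lie^\rho(A/S)$ and the fact that translation by $a$ changes only the $X$-coordinate. Same argument, just with the verification spelled out.
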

Here and below, for every integer $k \geq 1$, $p_k \colon  J_k (X/S) \to \Lie^k (A/S)$ denote the projection on the second factor in the canonical decomposition $J_k (X /S) = X \times_S \Lie^k (A /S)$.

\begin{proof}
This follows immediately from the Proposition \ref{Proposition-uniformity-jets}, by observing that for every integer $k \geq 1$, every $s\in S$ and every orbit $f \colon \bC \to X_s$ of a one-parameter subgroup of $A_s$, the condition $p_k \left(J_k(f)(0) \right) \in p_k \left( J_k (W/S) \right)$ is equivalent to the existence of a translate $f_a$ of $f$ satisfying $J_k(f_a)(0) \in J_k (W/S) $.
\end{proof}

\begin{proof}[Proof of Theorem \ref{SMT for abelian schemes}]
We keep the notations from the statement. Thanks to Corollary \ref{Corollary-uniformity-jets}, there exists an integer $\rho \geq 1$ such that for every $s\in S$ and every orbit $f \colon \bC \to X_s$ of a one-parameter subgroup of $A_s$, if $p_\rho \left( J_\rho (f) (0) \right) \in p_\rho \left( J_\rho (D/S) \right)$, then the image of a translate of $f$ is contained in $D_s$. On the other hand, let $s\in S$ and $f \colon \bC \to X_s$ be an orbit of a one-parameter subgroup of $A_s$ such that $p_\rho \left( J_\rho (f) (0) \right) \notin p_\rho \left( J_\rho (D/S) \right)$. Since $p_\rho \left( J_\rho (D_s) \right)$ is a closed subscheme of the finite-dimensional complex vector space $\Lie^\rho (A_s)$, there exists an algebraic function in $\HH^0( \Lie^\rho (A_s), \cO)$ that vanishes on $p_\rho \left( J_\rho (D_s) \right)$ but not on $p_\rho \left( J_\rho (f)(0) \right)$. Its pull-back along the projection $(p_\rho)_s \colon J_\rho (X_s) \to \Lie^\rho (A_s)$ yields $\omega \in \HH^0(J_\rho (X_s), \cO)$ that vanishes on $J_\rho (D_s)$ but not on $J_\rho (f)(0)$.
Theorem \ref{SMT for abelian schemes} is therefore a consequence of Theorem \ref{half SMT} below.
\end{proof}

\begin{thm}[cf. {\cite[Theorem 3.13]{Yamanoi-lectures}}]\label{half SMT}
Let $\tX$ be a smooth projective complex variety. Let $\tL$ be an ample line bundle on $\tX$. Let $\tD \subset \tX$ be an integral divisor. Let $\rho \geq 1$ be an integer and $\omega \in \HH^0(J_\rho (\tX), \cO)$ be an algebraic function that vanishes on $J_\rho (\tD)$. Then, if $f \colon \bC \to \tX$ is a non-constant holomorphic map with $f(\bC) \not \subset \tD$, then it satisfies either
\begin{enumerate}
    \item $\omega(J_\rho (f)) = 0$, or
    \item  $T (r, f, \tD) \leq  N^{(\rho)}(r, f, \tD) +o\left(T(r,f,\tL)\right) ||$.
\end{enumerate}
\end{thm}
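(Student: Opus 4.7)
The plan is to combine a local algebraic analysis of the vanishing of $\omega$ on $J_\rho(\tD)$ with Jensen's formula and the Lemma on Logarithmic Derivatives; throughout, assume $\omega(J_\rho(f)) \not\equiv 0$, so that we must establish the second alternative. First I would carry out the local computation. Around a smooth point of $\tD$, choose local coordinates on $\tX$ in which $\tD = \{x_1 = 0\}$; then $J_\rho(\tD) \subset J_\rho(\tX)$ is cut out by the ideal generated by the jet coordinates $x_1, x_1^{(1)}, \ldots, x_1^{(\rho)}$, and the hypothesis on $\omega$ gives a local expression $\omega = \sum_{k=0}^{\rho} x_1^{(k)} Q_k$ with $Q_k \in \cO_{J_\rho(\tX)}$. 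Evaluating along $J_\rho(f)$ and writing $y := s(f)$ for a local equation $s$ of $\tD$, this reads $\omega(J_\rho(f))(z) = \sum_{k=0}^{\rho} y^{(k)}(z) \cdot Q_k(J_\rho(f)(z))$. At each $z$ with $k_z := \mathrm{ord}_z y \geq \rho + 1$, every $y^{(k)}$ for $0 \leq k \leq \rho$ vanishes to order at least $k_z - \rho$, so the same holds for $\omega(J_\rho(f))$. Integrating over $|z| = r$ yields the counting-function comparison
\[ N(r, f, \tD) - N^{(\rho)}(r, f, \tD) \;\leq\; N\bigl(r, \omega(J_\rho(f)) = 0\bigr). \]

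Next I would apply Jensen's formula to the entire function $\omega(J_\rho(f))$ and exploit the logarithmic-derivative identity $\omega(J_\rho(f))/y = \sum_{k=0}^\rho (y^{(k)}/y) \cdot Q_k(J_\rho(f))$ obtained by dividing the local expression above by $y$. Jensen's formula turns $N(r, \omega(J_\rho(f)) = 0)$ into the mean value of $\log|\omega(J_\rho(f))|$; the First Main Theorem identifies the mean value of $\log(1/|s(f)|)$ with $T(r, f, \tD) + O(1)$; and the classical Lemma on Logarithmic Derivatives yields $m(r, y^{(k)}/y) = o(T(r, f, \tL)) \;||$. After rearranging and using Nevanlinna's First Main Theorem $T(r, f, \tD) = N(r, f, \tD) + m(r, f, \tD) + O(1)$, the desired bound $T(r, f, \tD) \leq N^{(\rho)}(r, f, \tD) + o(T(r, f, \tL)) \;||$ reduces to showing that each proximity function $m(r, Q_k(J_\rho(f)))$ is itself $o(T(r, f, \tL)) \;||$.

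The main obstacle is precisely this last control, since the $Q_k$ are only regular on $J_\rho(\tX)$, whose fibres over $\tX$ are non-compact, and so a priori can grow polynomially along jet directions. The standard remedy, worked out in detail in \cite[Theorem 3.13]{Yamanoi-lectures} following Bloch, Cartan, Nevanlinna, and Vojta, is to embed $\tX$ projectively via a sufficiently positive power of $\tL$, express each $Q_k(J_\rho(f))$ as a polynomial in iterated derivatives of meromorphic functions pulled back from coordinate ratios, and then iterate the Logarithmic Derivative Lemma. A secondary technical issue is that the local decomposition $\omega = \sum_k x_1^{(k)} Q_k$ depends on the chart, so a partition-of-unity or covering argument is needed to globalize the resulting inequalities; the compactness of $\tD$ makes this routine.
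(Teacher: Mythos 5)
The paper itself does not prove Theorem~\ref{half SMT}: it is stated with a citation to Yamanoi's lecture notes and is used as a black box in the proof of Theorem~\ref{SMT for abelian schemes}. So there is no ``paper's own proof'' to compare against, and any assessment of your proposal has to be on its internal merits and on whether it actually reconstructs the cited argument.

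Your outline follows the standard Bloch--Cartan--Vojta--Yamanoi template and the first two steps are essentially correct. Where $\tD$ is smooth, $J_\rho(\tD)$ is cut out in $J_\rho(\tX)$ by the regular sequence $x_1, x_1^{(1)},\ldots,x_1^{(\rho)}$, so a function in the ideal sheaf does admit a local expansion $\omega = \sum_{k\le\rho} x_1^{(k)} Q_k$, and evaluating along $J_\rho(f)$ gives, at each $z$ with $m_z := \mathrm{ord}_z f^{-1}(\tD) > \rho$, a zero of $\omega(J_\rho(f))$ of order at least $m_z-\rho$. This yields the pointwise inequality and hence $N(r,f,\tD) - N^{(\rho)}(r,f,\tD) \le N(r,\omega(J_\rho(f))=0)$, which is the right comparison. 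Combined with Jensen for the entire function $\omega(J_\rho(f))$ and the First Main Theorem for $T(r,f,\tD)$, this reduces the statement to bounding the proximity term, and you correctly identify that the bound on $m(r,Q_k(J_\rho(f)))$ is where all the work lies.

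Two genuine gaps remain. First, you defer the control of $m(r,Q_k(J_\rho(f)))$ entirely back to \cite[Theorem 3.13]{Yamanoi-lectures}, which is precisely what the paper already does by citing that result; the proposal therefore does not constitute an independent proof of the hard step, only a framing of it. Second, the globalization is not ``routine'' in the way a partition-of-unity phrase suggests: there are no holomorphic partitions of unity, the $Q_k$ and the local coordinate $x_1$ live on different charts, and $y = s(f)$ is only defined locally (the usual fix is to work with ratios of global sections of powers of $\tL$, producing meromorphic rather than holomorphic data, and then to take suprema over a finite cover). Moreover, your local decomposition is only valid near smooth points of $\tD$, while $\tD$ is merely assumed integral; one must separately verify that the points of $f^{-1}(\tD)$ mapping to $\tD_{\mathrm{sing}}$ either contribute nothing problematic or are absorbed into an exceptional term, and this is not addressed. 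As it stands, the proposal is a correct high-level outline of the shape of the argument rather than a proof, and it does not go beyond the citation that the paper already gives.
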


%%%%%%%%%%%%%%%%%%%%%%%%%%%%%%%%%%%%%%%%%%%%
\subsection{Intersection with subvarieties of codimension at least $2$}

A key step in the proof of Theorem \ref{truncated SMT for abelian schemes} consists in the following estimate of the Nevanlinna counting function of an entire curve with respect to a subvariety of codimension at least $2$, whose proof is given in the remainder of this section.

\begin{thm}\label{codim 2}
Let $S$ be a complex algebraic variety. Let $A \to S$ be an abelian scheme. Let $X$ be an $A$-torsor over $S$. Let $L \to X$ be a line bundle which is relatively ample over $S$. Let $Y \subset X$ be a closed subscheme such that $\codim(Y_s, X_s) \geq 2$ for every $s \in S$.\\
For every $\epsilon >0$, there exists a complex algebraic variety $T$, a morphism $\psi \colon T \to S$ and a relative effective Cartier divisor $E$ in $X_T \slash T$ such that, for every $s\in S$ and every orbit $f \colon \bC \to X_s$ of a one-parameter subgroup of $A_s$ such that $f(\bC) \not \subset Y$, one has either 
\begin{enumerate}
    \item $  N^{(1)}(r, f, Y_s) \leq \epsilon \cdot T(r,f,L_s) || $, or
    \item $f( \bC)$ is contained in $E_t$ for some $t \in \psi^{-1}(s)$. (Here, we identify $X_t$ with $X_s$ using $\psi$.)
\end{enumerate}
\end{thm}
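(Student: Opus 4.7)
The proof adapts the strategy used by Yamanoi for the absolute case (\cite{Yamanoi-lectures}, \cite{Yamanoi-maximal-Albanese}) to the relative setting over $S$. The overall plan is to reduce the codimension-$\geq 2$ estimate to the divisor-level second main theorem just proved (Theorem \ref{SMT for abelian schemes}) by constructing suitable divisors containing $Y$ with high multiplicity, so that Nevanlinna's inequality (Theorem \ref{Nevanlinna's inequality}) applied to them yields the required bound on $N^{(1)}(r, f, Y_s)$.

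I begin by decomposing $Y$ into its irreducible components and reducing, by Noetherian induction on $\dim S$, to the case where $Y$ is irreducible with $\codim(Y_s, X_s) \geq 2$ on a dense open of $S$. The orbits of one-parameter subgroups entirely contained in a fiber of $Y \to S$ are already accounted for by Corollary \ref{entire curves contained in a proper subvariety}: they come from finitely many families of orbits of proper abelian subschemes, and these contribute a first piece to the final exceptional divisor $E$. Thus one may assume $f(\bC) \not\subset Y_s$.

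The technical core is the following construction: for any $k \in \bN$, produce a relative effective Cartier divisor $D \subset X_T/T$ (after a suitable quasi-finite base change $\psi \colon T \to S$), with $Y_T \subset D$ of multiplicity at least $k$ along $Y_T$, corresponding to a section of $L_T^{\otimes m}$ with $m/k$ arbitrarily small. Once such $D$ is in hand, apply Theorem \ref{SMT for abelian schemes} to each irreducible fiber component of $D$: either a translate of $f$ lies in one such component---contributing to $E$ via the finiteness result Theorem \ref{A finiteness result}---or the SMT-type estimate holds for $D$. In the latter case, the high multiplicity along $Y$ forces the pointwise inequality $k \cdot \mathrm{ord}_z f^{-1}(Y_s) \leq \mathrm{ord}_z f^{-1}(D_s)$ at every $z \in f^{-1}(Y_s)$, hence $k \cdot N^{(1)}(r,f,Y_s) \leq N(r,f,D_s)$, which combined with Nevanlinna's inequality yields
\[ N^{(1)}(r, f, Y_s) \leq \tfrac{m}{k} \cdot T(r, f, L_s) + O(1) \leq \epsilon \cdot T(r, f, L_s) \; || . \]
The exceptional divisor $E$ is then built as the union (suitably organized via a common cover $T \to S$) of the piece from the reduction step and the finitely many translates of components of $D$ arising from the SMT alternative.

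The main obstacle is the construction of divisors with arbitrarily small ratio $m/k$: a naive degree count (using only that $Y$ has codimension $\geq 2$ and $L$ is ample) yields a fixed ratio bounded below by a Seshadri-type constant of $L$ along $Y$, which is insufficient. The improvement from a fixed bound to arbitrary $\epsilon$ is the technical heart of Yamanoi's approach and uses crucially the abelian group structure of $A \to S$, through translation by sections of $A$ and the multiplication-by-$N$ maps, allowing one to build divisors of relatively low degree with extremely high contact along $Y$. Extending this to families requires uniformizing the fiberwise construction via the descent results of Theorem \ref{descending abelian subschemes} and the structural results on subschemes of torsors of Theorem \ref{relative Ueno fibration}, and ensuring that only finitely many base changes $T \to S$ suffice, which is guaranteed by Theorem \ref{A finiteness result}.
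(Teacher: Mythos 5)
Your high-level strategy---construct a divisor with high contact along $Y$ so that Nevanlinna's inequality applied to it bounds $N^{(1)}(r,f,Y_s)$---is the right ballpark, and you correctly identify the obstacle: imposing vanishing of order $k$ along $Y$ in \emph{all} normal directions costs roughly $k^c$ conditions where $c = \codim Y$, which only gives $m/k$ bounded below by a fixed (Seshadri-type) constant. Where your proposal has a genuine gap is in the proposed fix. Invoking ``translation by sections of $A$ and multiplication-by-$N$ maps'' is not what the paper does here, and you don't explain how those techniques would overcome the dimension-count barrier you yourself isolated. As stated, your construction of $D$ is missing.

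The paper's mechanism is different and hinges on a key observation you did not use: since $f$ is the orbit of a one-parameter subgroup, it moves in a single, fixed direction, so one only needs the auxiliary divisor to have high contact \emph{in that one direction}, not in every normal direction. Concretely, the paper passes to the jet torsor $J_1(X/S)$ over $\Lie(A/S)$ (preserving $T$ and $N^{(1)}$ via the equalities $T(r,f,L_s)=T(r,J_1(f),M_s)$ and $N^{(1)}(r,f,Y_s)=N^{(1)}(r,J_1(f),Z_s)$ with $Z := Y\times_S \Lie(A/S)$), and thickens $Z$ to $Z_l$ only along the tautological orbit direction, using the exponential map $\Lie(A/S)\times_\bC\Delta_l \to J_1(A/S)$ together with the torsor action. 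The decisive point is that $Z_l$ is a one-dimensional thickening, so $h^0(\tZ_l, M^{\otimes k}) \leq (l+1)\cdot h^0(\tZ, M^{\otimes k}) \lesssim (l+1)\,k^{\dim X - 2}$, whereas $h^0(\tX, M^{\otimes k}) \gtrsim k^{\dim X}$. This makes the kernel of the restriction map non-trivial as soon as $l+1 < c'\,k^2$, so one can take $l \sim k^2$ and hence $k/l \to 0$. This is precisely where $\codim(Y_s,X_s)\geq 2$ is used: with codimension only $1$, the cost would be $(l+1)k^{\dim X - 1}$ and $k/l$ would again be bounded below. The orbit structure then gives the pointwise order inequality that translates into $(l+1)\cdot N^{(1)}(r,J_1(f),Z_s) \leq N(r,J_1(f),E_j)\leq k\cdot T(r,J_1(f),M_s)+O(1)$, and the exceptional set is the finite union of the divisors $E_j$ over a stratification of $\Lie(A/S)$ (Proposition \ref{Auxiliary divisor} and Proposition \ref{induction step}).

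Two smaller remarks. First, you invoke Theorem \ref{SMT for abelian schemes} to handle the components of $D$, but nothing that strong is needed here: only Nevanlinna's inequality (Theorem \ref{Nevanlinna's inequality}) enters, and the curves $f$ with $J_1(f)(\bC)\subset E_j$ are just declared exceptional, to be handled by the calling proof via Proposition \ref{key induction step}. Second, the hypothesis $f(\bC)\not\subset Y$ is assumed in the statement, so the reduction via Corollary \ref{entire curves contained in a proper subvariety} that you perform at the start is not needed at this stage; that corollary is instead used downstream when Theorem \ref{codim 2} is applied.
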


Let $S$ be a complex algebraic variety. Let $A \to S$ be an abelian scheme. Let $X \to S$ be an $A$-torsor over $S$. Let $L \to X$ be a line bundle which is relatively ample over $S$. Let $M \to J_1(X / S)$ denote the pull-back of $L \to X$ along the morphism $J_1(X / S) \to X$. Since there is a cartesian square

\[
\begin{tikzcd}
J_1(X / S) \arrow[d] \arrow[r] & \Lie(A/S) \arrow[d] \\
X \arrow[r]& S,
\end{tikzcd}
\]
the line bundle $M \to J_1(X / S)$ is relatively ample over $\Lie(A/ S)$. For every $s \in S$ and every holomorphic map $f \colon \bC \to X_s$, there is a commutative diagram
\[
\begin{tikzcd}
 & J_1(X_s) \arrow[d]  \\
\bC \arrow[r, "f"] \arrow[ur, "J_1(f)"]& X_s,
\end{tikzcd}
\]

from what it follows that 
\begin{equation}\label{relating T for f and J_1(f)}
T(r,f,L_s) = T(r,J_1(f),M_s) .
\end{equation}

Let $Y \subset X$ be a closed subscheme such that $\codim(Y_s, X_s) \geq 2$ for every $s \in S$. Write $Z := Y \times_X J_1(X/S) = Y \times_S \Lie(A/S)$, so that $Z \subset J_1(X/S)$ is a closed subscheme such that $\codim(Z_t, J_1(X/S)_t) \geq 2$ for every $t \in \Lie(A/S)$.
For every $s \in S$ and every holomorphic map $f \colon \bC \to X_s$ such that $f(\bC) \not \subset Y$, one has
\begin{equation}\label{relating N for f and J_1(f)}
 N^{(1)}(r, f, Y_s)  = N^{(1)}(r, J_1(f), Z_s). 
\end{equation}

With the preceding notations, and in view of (\ref{relating T for f and J_1(f)}) and (\ref{relating N for f and J_1(f)}), Theorem \ref{codim 2} reduces to the following result.
\begin{thm}\label{codim 2, bis}
Let $S$ be a complex algebraic variety. Let $A \to S$ be an abelian scheme. Let $X \to S$ be an $A$-torsor over $S$. Let $M \to J_1(X/S)$ be a line bundle which is relatively ample over $\Lie(A/S)$. Let $Z \subset J_1(X/S)$ be a closed subscheme such that $\codim(Z_t, J_1(X/S)_t) \geq 2$ for every $t \in \Lie(A/S)$.\\
For every $\epsilon >0$, there exists a complex algebraic variety $T$, a morphism $\psi \colon T \to \Lie(A/S)$ and a relative effective Cartier divisor $E$ in $X_T \slash T$ such that, for every $s \in S$ and every orbit $f \colon \bC \to X_s$ of a one-parameter subgroup of $A_s$ such that $J_1(f)(\bC) \not \subset Z$, one has either 
\begin{enumerate}
    \item  $  N^{(1)}(r, J_1(f), Z_s) \leq \epsilon \cdot T(r,J_1(f),M_s) || $, or
    \item $J_1(f)( \bC)$ is contained in $E_t$ for some $t \in \phi^{-1}(s)$, where $\phi$ is the composition of $\psi$ with $\Lie(A/S) \to S$. (Here again, we identify $X_t$ with $X_s$ using $\phi$.)
\end{enumerate}
\end{thm}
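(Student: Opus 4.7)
The plan is to construct, for each $\epsilon > 0$, a relative effective Cartier divisor $E \subset X_T$ over a suitable base $\psi : T \to \Lie(A/S)$, characterised by the property that for every $t \in T$ the fiber $E_t \subset X_{\phi(t)}$ contains $Z_{\psi(t)}$ (the fiber of $Z \subset J_1(X/S)$ above $\psi(t) \in \Lie(A/S)$, viewed inside $X_{\phi(t)}$) with high multiplicity. The Nevanlinna inequality then converts multiplicity into the desired $\epsilon$-bound.

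More precisely, since $J_1(X/S) = X \times_S \Lie(A/S)$, a one-parameter orbit $f : \bC \to X_s$ with tangent vector $v \in \Lie(A_s)$ satisfies $J_1(f)(z) = (f(z), v)$, so $J_1(f)^{-1}(Z) = f^{-1}(Z_v)$ with $Z_v := Z \cap (X_s \times \{v\}) \subset X_s$, and the hypothesis $\codim(Z_t, J_1(X/S)_t) \geq 2$ reads exactly $\codim(Z_v, X_s) \geq 2$. If $E \subset X_T$ has fibers $E_t \supset Z_{\psi(t)}$ with multiplicity at least $k$ and of degree $d$ with respect to $L$, then for any orbit $f$ with $f(\bC) \not\subset E_t$ (for some $t \in \psi^{-1}(v)$) the local inclusion $\cI_{E_t} \subset \cI_{Z_v}^k$ yields $\mathrm{ord}_z(f^*E_t) \geq k$ at every $z \in f^{-1}(Z_v)$. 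Combining this with Theorem \ref{Nevanlinna's inequality} and the identity $T(r, J_1(f), M_s) = T(r, f, L_s)$ gives
\[ k \cdot N^{(1)}(r, J_1(f), Z_s) \leq N(r, f, E_t) \leq T(r, f, E_t) + O(1) = d \cdot T(r, J_1(f), M_s) + O(1), \]
so $N^{(1)}(r, J_1(f), Z_s) \leq (d/k)\, T(r, J_1(f), M_s) + O(1)$. We are therefore reduced to constructing $T, \psi, E$ with ratio $d/k < \epsilon$, the remaining orbits satisfying $f(\bC) \subset E_t$ falling into alternative~(2).

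To produce such $E$, I would proceed fiberwise using the relative ampleness of $L$: on a smooth projective $X_s$ with ample $L_s$, a codimension-$c$ subvariety $Z_v$ admits a divisor in $|L_s^{\otimes d}|$ vanishing to order $k$ along $Z_v$ as soon as the vanishing conditions ($\sim k^c \cdot \deg_{L_s}(Z_v)$) are fewer than $h^0(L_s^{\otimes d}) \sim d^n/n!$ where $n = \dim X_s$. This gives $d \gtrsim k^{c/n}$ and hence $d/k \to 0$ whenever $c < n$. To globalise this estimate, I would stratify $\Lie(A/S)$ into finitely many locally closed strata on which the Hilbert polynomial of $Z_v$ and the direct image $\pi_*(L^{\otimes d} \otimes \cI_Z^{\otimes k})$ along $\pi : J_1(X/S) \to \Lie(A/S)$ are constant, work stratum by stratum, and then take $T \to \Lie(A/S)$ to be the relative projectivisation of this direct image, equipped with its tautological relative effective Cartier divisor $E \subset X_T$.

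The main obstacle is the edge case $c = n$, which forces $n = 2$ and $Z_v$ zero-dimensional; here the Seshadri-type argument yields only $d/k \gtrsim 1$. A separate analysis is then required: one-parameter orbits in an abelian surface whose period lattice has rank at most one either miss a given finite $Z_v$ or hit it periodically with $N^{(1)}(r, f, Z_v) = O(r)$, whereas $T(r, f, L_s) \asymp r^2$, so the $\epsilon$-bound is automatic for large $r$; the remaining orbits, with rank-two period lattice, are contained in a proper abelian subvariety orbit and can be absorbed into $E$ using Theorem \ref{entire curves contained in a proper subvariety}. Making this dichotomy uniform in families across $\Lie(A/S)$ — in particular incorporating both the tautological high-multiplicity divisor from the Seshadri construction and the finitely many sub-torsor divisors parametrised by Theorem \ref{A finiteness result} into a single $(T, \psi, E)$ — is the delicate remaining step.
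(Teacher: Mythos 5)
Your first two paragraphs capture the correct mechanism (auxiliary divisor of low ``degree-to-multiplicity ratio'' containing $Z$, Nevanlinna inequality converting multiplicity into the $\epsilon$-bound, stratification of $\Lie(A/S)$ by Noetherian induction), and that is indeed the skeleton of the paper's argument. But the way you build the auxiliary divisor — vanishing to order $k$ along $Z_v$ in \emph{all} transverse directions, with a Seshadri-type count $d \gtrsim k^{c/n}$ — only gives $d/k \to 0$ when $c < n$, i.e. $\dim Z_v \geq 1$. The theorem's hypothesis is only $c \geq 2$. When $Z_v$ is zero-dimensional (which happens for any $n \geq 2$, not just $n=2$ as you claim; $c = n$ forces $\dim Z_v = 0$ but puts no upper bound on $n$), your main construction produces nothing. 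Your proposed fix for this case is not correct: there is no reason a one-parameter orbit of an abelian surface should meet a fixed finite set with only $O(r)$ truncated counting; a Zariski-dense orbit can meet a point with counting $\asymp r^2$, comparable to $T(r,f,L_s)$, so the $\epsilon$-bound is not ``automatic.''

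The paper's construction avoids this issue by thickening $Z$ only in the \emph{orbit direction}. Concretely, it defines $Z_l$ as the image of $Z \times_{\bC} \Delta_l$ under the map to $J_1(X/S)$ built from the canonical section $\sigma_l \colon \Lie(A/S) \to J_l(A/S)$ of \parref{Canonical sections of the forget maps}. This is a one-dimensional infinitesimal fattening, so the bound on global sections is $h^0(\tZ_l, M^{\otimes k}_{|\tZ_l}) \leq (l+1)\, h^0(\tZ, M^{\otimes k}_{|\tZ})$ — linear in $l$, not $\sim k^c$. Since $\dim \tZ \leq n - 2$, the right side is $O\bigl((l+1) k^{n-2}\bigr)$, to be compared with $h^0(\tX, M^{\otimes k}) \gtrsim k^n$; taking $l \sim k^2$ makes the restriction map non-injective and gives $k/l \to 0$. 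The crucial point is that a section vanishing on $Z_l$ still pulls back under $J_1(f)$ to a function vanishing to order $\geq l+1$ at every point where $J_1(f)$ meets $Z$, because $Z_l$ was fattened precisely along the tangent direction that one-parameter orbits travel. This is the idea your proposal is missing, and it is what lets the argument work uniformly under the sole hypothesis $\codim \geq 2$.
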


Let us now prove Theorem \ref{codim 2, bis}. For every integer $l \geq 1$, by definition of the scheme $J_l(A / S)$ of $l$-jet differentials of $A$ over $S$, there is a tautological $S$-morphism $ J_l (A / S) \times_\bC \Delta_l \to A$. By precomposing with the canonical $S$-morphism $\Lie (A / S) \to J_l(A / S)$ from section \ref{Canonical sections of the forget maps}, one obtains the $S$-morphism $ \Lie (A / S) \times_\bC \Delta_l \to A$. Together with the $S$-morphism $ \Lie (A / S) \times_\bC \Delta_l \to \Lie (A / S)$ projection on the first factor, this yields a commutative diagram:
\[
\begin{tikzcd}
\Lie (A / S) \times_\bC \Delta_l \arrow[rd] \arrow[r] & J_1 (A / S) \arrow[d] \arrow[r] & A \arrow[d] \\
& \Lie (A / S) \arrow[r] & S
\end{tikzcd}
\]

Since the composite morphism $\Lie (A / S) \times_\bC \Delta_l \to J_1 (A / S) \to \Lie (A / S)$ is proper, the morphism $\Lie (A / S) \times_\bC \Delta_l \to J_1 (A / S)$ is proper too.\\

Since $J_1(X/S)$ is a $J_1(A/S)$-torsor over $\Lie(A/S)$, the composition of the action 
\[J_1(A/S) \times_{\Lie(A/S)} J_1(X/S) \to J_1(X/S) \]
with the $\Lie (A / S)$-morphism 
\[ \left( \Lie (A / S) \times_\bC \Delta_l \right)\times_{\Lie(A/S)} Z \to J_1(A/S) \times_{\Lie(A/S)} J_1(X/S) \]
induces a proper $\Lie (A / S)$-morphism
\[ \left( \Lie (A / S) \times_\bC \Delta_l \right) \times_{\Lie(A/S)} Z \to J_1(X/S). \]
Let $Z_l \subset J_1 (X/S)$ denote its image, and let 
\[ \tilde{Z_l} := \left( \Lie (A / S) \times_\bC \Delta_l \right) \times_{\Lie(A/S)} Z = Z \times_\bC \Delta_l, \]
so that one has the following commutative  diagram: 
\[
\begin{tikzcd}
 \tilde{Z_l} \arrow[rrd] \arrow[r, twoheadrightarrow ] &  Z_l \arrow[r, hookrightarrow] & J_1(X/S) \arrow[d] \\
& & \Lie(A/S).
\end{tikzcd}
\]

\begin{prop}\label{Auxiliary divisor}
Let $\epsilon > 0$. Let $W \subset \Lie(A/S)$ be a closed integral subscheme.
Then, there exists two integers $k,l \geq 1$ with $\frac{k}{l} < \epsilon$, $U \subset W $ a Zariski-dense open subset and a relative effective Cartier divisor $E \subset J_1(X/S)_{U} $ over $U$ defined by a section in $\HH^0\left( J_1(X/S)_{U}, M^{\otimes k} \right) $ such that $(Z_l)_U \subset E$.
\end{prop}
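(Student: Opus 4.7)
The plan is to construct $E$ as the zero scheme of a non-zero section $\sigma \in \HH^0(J_1(X/S)_U, \cI \otimes M^{\otimes k})$, where $\cI$ denotes the ideal sheaf of $(Z_l)_U$ inside $J_1(X/S)_U$. The existence of such $\sigma$, together with the bound $k/l < \epsilon$, will follow from an asymptotic Riemann--Roch count along the fibres of $\pi\colon J_1(X/S)_U \to U$ that crucially uses the hypothesis $\codim(Z_t, J_1(X/S)_t) \geq 2$.

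First, shrink $W$ to a Zariski-dense affine open $U \subset W$ on which $(Z_l)_U \to U$ is flat (generic flatness) and on which cohomology-and-base-change applies uniformly for $M^{\otimes k}$ with $k$ large; this is possible by coherence and the relative ampleness of $M$ over $U$. Pushing forward the exact sequence
\[ 0 \to \cI \otimes M^{\otimes k} \to M^{\otimes k} \to M^{\otimes k}\big|_{(Z_l)_U} \to 0 \]
to $U$ yields a short exact sequence of locally free sheaves (higher direct images vanish for $k \gg 0$). Writing $g := \dim(A/S)$, the fibrewise ranks are, for $t \in U$ with image $s \in S$,
\[ \rk (\pi_\ast M^{\otimes k})_t = h^0(X_s, L_s^{\otimes k}) = \tfrac{L_s^g}{g!}\, k^g + O(k^{g-1}), \]
while the surjection of $\cO_{X_s}$-modules $\cO_{Z_t \times_\bC \Delta_l} \twoheadrightarrow \cO_{(Z_l)_t}$ together with the projection formula yields
\[ h^0\!\bigl((Z_l)_t, M_t^{\otimes k}\big|_{(Z_l)_t}\bigr) \leq (l+1) \cdot h^0(Z_t, M_t^{\otimes k}|_{Z_t}) \leq C\,(l+1)\, k^{g-2}, \]
uniformly in $t \in U$, where $l+1 = \dim_\bC \cO_{\Delta_l}$ and the exponent $g-2$ uses $\dim Z_t \leq g-2$. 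Therefore $\rk \pi_\ast(\cI \otimes M^{\otimes k}) \geq c_1 k^g - c_2(l+1)\, k^{g-2}$, which is positive once $k^2 > (c_2/c_1)(l+1)$. Given $\epsilon > 0$, pick $l$ large enough so that $\lceil 2\sqrt{(c_2/c_1)(l+1)}\rceil / l < \epsilon$ and set $k := \lceil 2\sqrt{(c_2/c_1)(l+1)}\rceil$; then both $k/l < \epsilon$ and $\pi_\ast(\cI \otimes M^{\otimes k}) \neq 0$. Since $U$ is affine, this non-zero coherent sheaf admits a non-zero global section $\sigma$, whose zero scheme $E \subset J_1(X/S)_U$ is an effective Cartier divisor containing $(Z_l)_U$. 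A final shrinking of $U$, via generic flatness of $E \to U$, makes $E$ into a relative effective Cartier divisor over $U$ defined by a section of $M^{\otimes k}$.

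The crux of the argument is the rank estimate: scheme-theoretically $(Z_l)_t$ is a one-direction thickening of $Z_t$ (its length over $\cO_{Z_t}$ equals only $l+1$, rather than the $\binom{l+c}{c}$ of a full $l$-th infinitesimal neighbourhood in codimension $c$), and this \emph{linear-in-$l$} length combined with the codimension-two hypothesis $\dim Z_t \leq g-2$ is precisely what produces the quadratic-versus-linear gap allowing $k = O(\sqrt{l})$. Were the codimension only $\geq 1$, the same computation would force $k \gtrsim l$ and the requirement $k/l < \epsilon$ would be unattainable. The remaining technicalities (generic flatness, uniform cohomology-and-base-change, flatness of $E \to U$) are routine and handled by successive shrinkings of $U$.
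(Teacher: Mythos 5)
Your proposal is correct and follows essentially the same strategy as the paper: both arguments rest on the length-$(l+1)$ bound for sections over the one-direction thickening $(Z_l)_t$, exploit the codimension-two hypothesis to open a gap of two in the exponent, and find $k \sim \sqrt{l}$ with $k/l < \epsilon$ so that the restriction map on $\HH^0(-, M^{\otimes k})$ fails to be injective. The paper simply works over the generic point of $W$ and then spreads out by Zariski closure and generic flatness, whereas you push forward over an affine open and take a global section -- the same argument in different bookkeeping (and you can sidestep the quantifier issue in ``higher direct images vanish for $k \gg 0$'' -- whose threshold could a priori grow with $l$ -- by checking non-vanishing of $\pi_\ast(\cI \otimes M^{\otimes k})$ at the generic point of $U$, which is exactly what the paper does).
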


\begin{proof}
We denote by $\tZ, \tZ_l, \tilde{\tZ}_l, \tX$ the restriction of $Z, Z_l, \tilde{Z}_l, J_1(X/S)$ respectively to the generic point of $W$. Note that $\tilde{\tZ}_l = \tZ \times_\bC \Delta_l$.  \\

Let us first prove that for every positive integers $k,l$:
\[ h^0(\tZ_l, M_{|\tZ_l} ^{\otimes k}) \leq (l+ 1) \cdot h^0(\tZ, M_{|\tZ} ^{\otimes k}).  \]
By construction, there is a schematically surjective morphism $ \tilde{\tZ}_l  \to \tZ_l$, hence an injection 
\[   \HH^0(\tZ_l, M_{|\tZ_l} ^{\otimes k}) \hookrightarrow \HH^0(\tilde{\tZ}_l, M_{|\tilde{\tZ}_l} ^{\otimes k}).  \]
Therefore, it is sufficient to prove that for every positive integers $k,l$:
\[ h^0(\tilde{\tZ}_l, M_{|\tilde{\tZ}_l} ^{\otimes k}) \leq (l+ 1) \cdot h^0(\tZ, M_{|\tZ} ^{\otimes k}).  \]
This is a consequence of the following proposition.

\begin{prop}
Let $V$ be a scheme proper over a field $K$. For every positive integer $l$, let $V_l := V \times_K \Spec \left( K[t] / (t^{l + 1}) \right)$. For any line bundle $M$ on $V_l$, 
\[ h^0(V_l, M) \leq (l+ 1) \cdot h^0(V_0, M_{|V_0}).  \]
\end{prop}
\begin{proof}
For every non-negative integers $k <l$, denote by $\iota_k ^l \colon V_k \hookrightarrow V_l$ the canonical closed immersion. In particular, for every positive integer $l$, we have an exact sequence of $\cO_{V_l}$-modules:
\[  0 \to \cI_{l-1}^l \to \cO_{V_l} \to (\iota_{l-1}^l)_\ast \cO_{V_{l-1}}\to 0 . \] 

By tensoring with the locally-free $\cO_{V_l}$-module $M$, we get an exact sequence of $\cO_{V_l}$-modules:
\begin{equation}\label{exact sequence thickening}
0 \to \cI_{l-1}^l \otimes_{\cO_{V_l}} M \to M \to \left(\left(\iota_{l-1}^l\right)_\ast \cO_{V_{l-1}} \right) \otimes_{\cO_{V_l}} M \to 0.
\end{equation}

On the one hand, since $\cI_{l-1}^l = (t^l)$ is a principal ideal sheaf of $\cO_{V_l}$ which is annihilated by the ideal sheaf $\cI_0^l = (t)$, there is an isomorphism $\cI_{l-1}^l \simeq (\iota_0^l)_\ast \cO_V$ as $\cO_{V_l}$-modules. Therefore, $\cI_{l-1}^l \otimes_{\cO_{V_l}} M \simeq (\iota_0^l)_\ast \left( M_{|V} \right)$ as $\cO_{V_l}$-modules.
On the other hand, $\left(\left(\iota_{l-1}^l\right)_\ast \cO_{V_{l-1}} \right)\otimes_{\cO_{V_l}} M = (\iota_{l-1}^l)_\ast \left( M_{|V_{l-1}} \right)  $. Therefore, it follows from the exact sequence (\ref{exact sequence thickening}) that 
\[ h^0(V_l, M) \leq h^0(V, M_{|V}) + h^0(V_{l-1}, M_{|V_{l-1}}),  \]
and the result follows by induction on $l$.
 \end{proof}

Let $C$ be a positive number such that for every positive integer $k$:
\[ h^0(\tZ, M_{|\tZ} ^{\otimes k}) \leq C \cdot k^{\dim_{\bC(W)} \tZ },\]
so that for every positive integer $k$:
\[ h^0(\tZ_l, M_{|\tZ_l} ^{\otimes k}) \leq (l+ 1) \cdot h^0(\tZ, M_{|\tZ} ^{\otimes k}) \leq C \cdot (l+ 1) \cdot k^{\dim_{\bC(W)} \tZ }.\]
On the other hand, since $M$ is relatively ample, the line bundle $M_{|\tX}$ is ample on $\tX$. In particular, there exists a positive number $C^\prime$ such that for $k \gg 1$: 
\[ h^0(\tX, M_{|\tX}^{\otimes k}) \geq C^\prime \cdot k^{\dim_{\bC(W)} \tX}. \]
Therefore, there exists $k,l \geq 1$ with $\frac{k}{l} < \epsilon$, such that the natural $\bC(W)$-linear map
\[ \HH^0(\tX, M_{|\tX}^{\otimes k} )  \to \HH^0(\tZ_l, M^{\otimes k}_{|\tZ_l} ) \]
is not injective. Let $\texttt{E} \subset \tX$ be a Cartier divisor corresponding to a section in the kernel. Denote by $E$ the Zariski closure of $\texttt{E}$ in $J_1(X/S)$ (or equivalenty in $J_1(X/S)_W$). By generic flatness, there exists $U \subset W $ a Zariski-dense open subset such that $E \subset J_1(X/S)_{U} $ is a relative effective Cartier divisor over $U$. Moreover, up to shrinking $U$, the divisor $E$ corresponds to a section in $\HH^0 \left( J_1 (X/S)_{U}, M ^{\otimes k} \right) $ and $(Z_l)_U \subset E$. This finishes the proof of Proposition \ref{Auxiliary divisor}.
\end{proof}

\begin{prop}\label{induction step} For every $\epsilon > 0$, there exists
\begin{itemize}
\item a partition of $\Lie(A/S)$ in finitely many locally closed subvarieties $T_j$, $j \in J$, 
\item for every $j \in J$, a relative effective Cartier divisor $E_j \subset \left(J_1(X/S)\right)_{T_j}$ over $T_j$, 
\end{itemize}   
such that, for every $s \in S$ and every orbit $f \colon \bC \to X_s$ of a one-parameter subgroup of $A_s$ such that the image of the induced holomorphic map $J_1(f) \colon \bC \to J_1 (X_s)$ is not contained in one of the $E_j$, the following estimates holds:
\[  N^{(1)}(r, J_1(f), Z_s) \leq \epsilon \cdot T(r,J_1(f),M_s) ||  . \]
\end{prop}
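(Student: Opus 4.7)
The plan is to perform noetherian induction on closed subschemes of $\Lie(A/S)$, combining Proposition~\ref{Auxiliary divisor} with Nevanlinna's First Main Theorem. First I would reduce the statement to the following local form: for every closed irreducible subvariety $W \subset \Lie(A/S)$, there exist a Zariski-dense open $U \subset W$ and a relative effective Cartier divisor $E \subset J_1(X/S)_U$ over $U$ such that, for every orbit $f \colon \bC \to X_s$ of a one-parameter subgroup whose defining tangent vector lies over a point of $U$ and satisfies $J_1(f)(\bC) \not \subset E$, one has $N^{(1)}(r, J_1(f), Z_s) \leq \epsilon \cdot T(r, J_1(f), M_s) \; ||$. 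Granting this, the stratification of $\Lie(A/S)$ is produced by iteration: apply the local form to the irreducible components of $\Lie(A/S)$ to obtain a first layer of strata with their divisors, and recurse on the strictly lower-dimensional closed complement; the process terminates by noetherianity.

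To prove the local form over $W$, I would apply Proposition~\ref{Auxiliary divisor} with the prescribed $\epsilon$ to obtain positive integers $k, l$ with $k/l < \epsilon$, a dense open $U \subset W$, and a relative effective Cartier divisor $E$ on $J_1(X/S)_U$ cut out by a section of $M^{\otimes k}$, with $(Z_l)_U \subset E$. For an orbit $f$ as above with $J_1(f)(\bC) \not \subset E$, Nevanlinna's inequality (Theorem~\ref{Nevanlinna's inequality}) yields
\[ N(r, J_1(f), Z_l) \leq N(r, J_1(f), E) \leq T(r, J_1(f), E) + O(1) = k \cdot T(r, J_1(f), M_s) + O(1). \]
The desired bound would then follow from the multiplicity estimate
\[ (l+1) \cdot N^{(1)}(r, J_1(f), Z_s) \leq N(r, J_1(f), Z_l), \]
since $k/(l+1) < \epsilon$.

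The hard part is this multiplicity estimate: each point $z_0 \in \bC$ with $J_1(f)(z_0) \in Z_s$ must contribute with multiplicity at least $l+1$ to $J_1(f)^{-1}(Z_l)$. By the order-of-vanishing proposition recalled earlier ($g(\Delta_k) \subset Y$ implies $\mathrm{ord}_0(g^{-1}(Y)) \geq k+1$), this reduces to showing the inclusion $J_1(f)(z_0 + \Delta_l) \subset (Z_l)_s$ of closed analytic subspaces. I would verify this by unwinding the construction of $Z_l$: since $f$ is the orbit of a one-parameter subgroup with tangent vector $v \in \Lie(A_s)$, for $t \in \Delta_l$ the infinitesimal translate $J_1(f)(z_0 + t)$ is precisely the result of acting on $J_1(f)(z_0) \in Z_s$ by the image of $(v, t)$ under $\Lie(A/S) \times_\bC \Delta_l \to J_1(A/S)$; by the very definition of $Z_l$ as the image of such an action on $Z$, this translate lies in $(Z_l)_s$, completing the argument.
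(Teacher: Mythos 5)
Your proposal is correct and follows essentially the same route as the paper: a noetherian stratification of $\Lie(A/S)$ obtained by iterating Proposition~\ref{Auxiliary divisor}, followed by Nevanlinna's inequality applied to the auxiliary divisors and the multiplicity estimate $(l+1)\,N^{(1)}(r,J_1(f),Z_s)\le N(r,J_1(f),Z_l)$. The one place where you add value is in actually justifying that multiplicity estimate via the order-of-vanishing proposition and the inclusion $J_1(f)(z_0+\Delta_l)\subset Z_l$ (unwinding the action of $\Lie(A/S)\times_\bC\Delta_l$ on $Z$), whereas the paper simply asserts it ``by definition of $Z_l$''; your verification is accurate.
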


\begin{proof} Fix $\epsilon > 0$. By induction from Proposition \ref{Auxiliary divisor}, there exists  a partition of $\Lie(A/S)$ in finitely many locally closed subvarieties $T_j$, $j \in J$, and for every $j \in J$ a relative effective Cartier divisor $E_j \subset \left(J_1 (X/S) \right)_{T_j}$ over $T_j$ such that there exists two integers $k_j,l_j \geq 1$ with $\frac{k_j}{l_j} < \epsilon$ such that the effective Cartier divisor $E_j \subset \left( J_1 (X/S) \right)_{T_j} $ corresponds to a section in $\HH^0 \left( \left( J_1(X/S) \right)_{T_j}, M ^{\otimes k_j} \right) $ and $\left(Z_{l_j} \right)_{T_j} \subset E_j$.\\

Let $f \colon \bC \to X_s$ be an orbit of a one-parameter subgroup of $A_s$. The image of the induced holomorphic map $J_1(f) \colon \bC \to J_1 (X_s)$ is contained in a fibre of $J_1(X / S) \to \Lie(A /S)$, hence it is contained in $\left( J_1 (X/S) \right)_{T_j}$ for some $j \in J$. Assume that $J_1(f)(\bC) \not \subset E_j$. Since $(Z_{l_j}) \subset E_j$, it follows that $N(r, J_1(f), \left( Z_{l_j}\right)_s) \leq N(r, J_1(f), \left(E_j\right)_s)$. Thanks to Nevanlinna's inequality (cf. Theorem \ref{Nevanlinna's inequality}), this implies 
\[ N(r, J_1(f), \left( Z_l \right)_s) \leq     T(r, J_1(f), M_s^{\otimes k_j}) + O(1) \leq k_j \cdot T(r, J_1(f), M_s) + O(1). \]

On the other hand, one has  
\[ (l_j + 1) \cdot N^{(1)}(r, J_1(f), Z_s) \leq  N(r, J_1(f), \left( Z_l \right)_s)\]
by definition of $Z_l$. Therefore, we get that 
 \[   (l_j + 1) \cdot N^{(1)}(r, J_1(f), Z_s)    \leq k_j \cdot T(r, J_1(f), M_s) + O(1), \]
  so that
  \[   N^{(1)}(r, J_1(f), Z_s) \leq  \frac{k_j}{l_j+ 1}  \cdot T(r, J_1(f), M_s)  + O(1). \]

Since $\frac{k_j}{l_j} < \epsilon$, the result follows by noting that $T(r, J_1(f), M_s)$ goes to infinity with $r$, since $J_1(f)$ is not constant and $M$ is ample in restriction to the fibre of $J_1(X/S) \to \Lie(A/S)$ that contains the image of $J_1(f)$.
\end{proof}

Letting $T$ be the disjoint union of the $T_i$'s, and $E$ be the disjoint union of the $E_i$'s, we see that Theorem \ref{codim 2, bis} is a consequence of Proposition \ref{induction step}.

%%%%%%%%%%%%%%%%%%%%%%%%%%%%%%%%%%%%%%%%%%%%%%%%%%%%%%%%%%%%%%%%
\subsection{End of the proof of Theorem \ref{truncated SMT for abelian schemes}}

We will prove Theorem \ref{truncated SMT for abelian schemes} for every tuple $\left(S, A, X, D, L \right)$ by induction on the relative dimension of $X / S$. Here, $S$ is a complex algebraic variety, $A \to S$ is an abelian scheme, $X \to S$ is an $A$-torsor over $S$, $D \subset X$ is a reduced effective Cartier divisor and $L \to X$ is a line bundle which is relatively ample over $S$.\\
There is nothing to prove when the relative dimension is zero.  Let $d \geq 1$ be an integer and assume that Theorem \ref{truncated SMT for abelian schemes} is true for every tuple $\left(S, A, X, D, L \right)$ when in addition $X / S$ has relative dimension $<d$.  We decompose the proof in several steps.\\

We start with the following easy observation.
\begin{prop} \label{etale cover}
Let $T$ be a complex algebraic variety and $T \to S$ be an étale dominant morphism. Assume that Theorem \ref{truncated SMT for abelian schemes} holds for the tuple $\left(T, A_T, X_T, D_T, L_T \right)$. Then Theorem \ref{truncated SMT for abelian schemes} holds for the tuple $\left(S, A, X, D, L \right)$.
\end{prop}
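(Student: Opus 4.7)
The strategy is to transport the closed subscheme $\Xi_T \subsetneq X_T$ furnished by the hypothesis on $T$ to a closed subscheme $\Xi \subsetneq X$ on $S$, and to observe that any one-parameter-subgroup orbit in a fibre $X_s$ whose image escapes $\Xi$ canonically lifts to a fibre of $X_T \to T$ where the hypothesis applies directly. The key ingredient is that the base-change morphism $\pi \colon X_T \to X$, being étale, is an open map.

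First I would apply the assumed theorem to the tuple $(T, A_T, X_T, D_T, L_T, \epsilon)$ to produce $\Xi_T \subsetneq X_T$. Let $U \subset S$ denote the image of $T \to S$; it is open and dense because this morphism is étale and dominant. Since $\pi$ is open, the subset $W := \pi(X_T \setminus \Xi_T)$ is a non-empty open subset of $X$, and it is contained in $X_U$. I would then define
\[ \Xi := (X \setminus W) \cup X_{S \setminus U}, \]
endowed with, say, the reduced induced closed subscheme structure. This is a proper closed subscheme of $X$: its complement contains the non-empty open set $W \subset X_U$.

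To verify the conclusion, fix $s \in S$ and a one-parameter-subgroup orbit $f \colon \bC \to X_s$ with $f(\bC) \not\subset \Xi$. The inclusion $X_{S \setminus U} \subset \Xi$ forces $s \in U$. Picking $z \in \bC$ with $f(z) \in W$ and a preimage $x^\prime \in X_T \setminus \Xi_T$ of $f(z)$, one obtains a closed point $t \in T$ over $s$ such that $x^\prime \in (X_T)_t$. Since $s$ is a complex point and $T \to S$ is étale, the canonical morphism $(X_T)_t \to X_s$ is an isomorphism compatible with the torsor structure, with the divisor $D$ and with the line bundle $L$. Under this identification $f$ lifts to an orbit $f_t \colon \bC \to (X_T)_t$ of a one-parameter subgroup of $(A_T)_t$ with $f_t(z) = x^\prime \notin \Xi_T$, hence $f_t(\bC) \not\subset \Xi_T$; the hypothesis applied at $t$ then yields the desired estimate, which translates back verbatim to $X_s$.

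I do not expect any genuine obstacle: the argument is essentially bookkeeping based on the openness of étale morphisms and the triviality of residue-field extensions at complex points. The only subtlety worth flagging is the handling of points of $S$ lying outside the image $U$ of $T \to S$, for which there is no lift to $T$; enlarging the naive candidate $X \setminus W$ by $X_{S \setminus U}$ is precisely what makes the conclusion vacuous at such $s$ and explains the second summand in the definition of $\Xi$.
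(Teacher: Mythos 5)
Your proof is correct and takes essentially the same approach as the paper's: transport the exceptional set through the étale base change and add the preimage of $S\setminus U$. The paper defines $\Xi$ as the closure of $\pi(\Xi_T)$ union $X_{S\setminus U}$, whereas you define it as the complement of $\pi(X_T\setminus\Xi_T)$ union $X_{S\setminus U}$; your set is contained in the paper's (and can be strictly smaller when $T\to U$ has degree $>1$), but the two are interchangeable for the purpose of the argument.
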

\begin{proof}
Write the morphism $T \to S$ as the composition of a finite étale surjective morphism $T \to U$ and an open immersion $U \subset S$, and assume that Theorem \ref{truncated SMT for abelian schemes} is verified by the tuple $\left(T, A_T, X_T, D_T, L_T \right)$. For every $\epsilon >0$, let $\Xi_T = \Xi(A_T, X_T, D_T, L_T, \epsilon)  \subsetneq X_T$ be the closed subscheme whose existence is claimed in the statement. Then Theorem \ref{truncated SMT for abelian schemes} is verified by the tuple $\left(S, A, X, D, L \right)$, by letting $\Xi$ be the union of the closure of the image of $\Xi_T$ in $X$ and the preimage in $X$ of the complementary of $U$ in $S$.
\end{proof}

We will also use several times the following result.

\begin{prop} \label{key induction step}
Let $T$ be a complex algebraic variety equipped with a morphism $\phi \colon T \to S$. Let $E$ be a relative effective Cartier divisor in $X_T \slash T$. Then, for every $\epsilon > 0$, there exists $\Xi \subsetneq X$ a closed algebraic subvariety such that, for every $s\in S$ and every orbit $f \colon \bC \to X_s$ of a one-parameter subgroup of $A_s$ such that $f( \bC)\subsetneq \Xi$ and $f( \bC)$ is contained \textit{up to translation} in $E_t$ for some $t \in \phi^{-1}(s)$ (where $X_t$ is identified with $X_s$ using $\phi$), the estimate
\[ T(r, f, D_s) \leq N^{(1)}(r, f, D_s) +  \epsilon \cdot T(r, f, L_s) \, || \,  \]
holds. 
\end{prop}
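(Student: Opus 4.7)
The plan is to prove Proposition \ref{key induction step} by reducing it to the inductive hypothesis of Theorem \ref{truncated SMT for abelian schemes} applied to torsors of strictly smaller relative dimension.

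First, I would apply Corollary \ref{entire curves contained in a proper subvariety} to the relative effective Cartier divisor $E$ in the $A_T$-torsor $X_T \to T$. This yields finitely many pairs $(\psi_i \colon T_i \to T, B_i)_{i \in I}$ with $B_i \subsetneq A_{T_i}$ a strict abelian subscheme, such that for every $t \in T$ and every non-constant holomorphic map $g \colon \bC \to X_t$ whose image is contained up to translation in $E_t$, there exist $i \in I$ and $t_i \in \psi_i^{-1}(t)$ with $g(\bC)$ contained in a $(B_i)_{t_i}$-orbit. Applied to our $f$ (with $s = \phi(t)$, $X_s = X_t$), this forces the one-parameter subgroup generating $f$ to lie in $(B_i)_{t_i} \subset A_s$, so that $f$ is itself an orbit of a one-parameter subgroup of $(B_i)_{t_i}$ inside a $(B_i)_{t_i}$-torsor in $X_s$.

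Next, for each $i \in I$, I would form the quotient $Q_i := X_{T_i}/B_i$, so that $X_{T_i} \to Q_i$ is a torsor under the pullback of $B_i$, of relative dimension $\dim B_i < d$. The pullback $L_i$ of $L$ to $X_{T_i}$ is relatively ample over $Q_i$ (its restriction to each $B_i$-orbit being ample, being the restriction of an ample bundle), and the pullback $D_i$ of $D$ is a reduced effective Cartier divisor. The inductive hypothesis applied to the tuple $(Q_i, (B_i)_{Q_i}, X_{T_i}, D_i, L_i)$ with $\epsilon$ yields a strict closed subscheme $\Xi_i \subsetneq X_{T_i}$ such that for every $q \in Q_i$ and every orbit $g \colon \bC \to (X_{T_i})_q$ of a one-parameter subgroup of $(B_i)_q$ with $g(\bC) \not\subset \Xi_i$, the divisor $(D_i)_q$ is a reduced effective Cartier divisor and
\[ T(r, g, (D_i)_q) \leq N^{(1)}(r, g, (D_i)_q) + \epsilon \cdot T(r, g, (L_i)_q) \; ||_{\epsilon}. \]

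I would then define $\Xi$ to be the Zariski closure in $X$ of $\bigcup_{i \in I} \mathrm{Im}(\Xi_i \to X)$, augmented by the closure of the preimage of $S \setminus \phi(T)$. For $f$ as in the hypothesis with $f(\bC) \not\subset \Xi$, the canonical lift $\tilde f_i \colon z \mapsto (f(z), t_i)$ to $X_{T_i}$ has image in the fibre $(X_{T_i})_q$ corresponding to the $(B_i)_{t_i}$-orbit containing $f(\bC)$, and $\tilde f_i(\bC) \not\subset \Xi_i$ because $f(\bC) \not\subset \overline{\mathrm{Im}(\Xi_i \to X)}$. Since $f$ factors through the closed subvariety $(X_{T_i})_q \subset X_s$, the pullbacks of line bundles and divisors to $\bC$ are the same whether computed from $X_s$ or from $(X_{T_i})_q$, so the Nevanlinna quantities $T(r, f, D_s)$, $N^{(1)}(r, f, D_s)$, $T(r, f, L_s)$ match those for $\tilde f_i$ with respect to $(D_i)_q$ and $(L_i)_q$, and the inductive estimate transfers to the desired bound.

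The hardest part will be verifying that $\Xi \subsetneq X$. Tracing the construction behind Corollary \ref{entire curves contained in a proper subvariety} through the proof of Theorem \ref{A finiteness result}, each $T_i$ is built from $T$ by successive base changes along open immersions, finite étale covers, and closed immersions, so $\dim T_i \leq \dim T$. After a preliminary reduction, using Proposition \ref{etale cover} and a stratification argument, to the situation $\dim T \leq \dim S$, one obtains $\dim X_{T_i} \leq \dim X$. The strict inclusion $\Xi_i \subsetneq X_{T_i}$ then gives $\dim \overline{\mathrm{Im}(\Xi_i \to X)} \leq \dim \Xi_i < \dim X_{T_i} \leq \dim X$, so each $\overline{\mathrm{Im}(\Xi_i \to X)}$ is a strict closed subset of $X$, and hence so is their finite union $\Xi$.
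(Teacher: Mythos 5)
Your overall plan is the right one, and the first two steps match the paper's proof: apply Corollary \ref{entire curves contained in a proper subvariety} to the $A_T$-torsor $X_T/T$ to reduce to one-parameter subgroups lying in orbits of strict abelian subschemes $B_i\subsetneq A_{T_i}$, then invoke the inductive hypothesis for the torsors $X_{T_i}\to X_{T_i}/B_i$, which have strictly smaller relative dimension. But the ``hardest part'' you flag — showing $\Xi\subsetneq X$ — is exactly where your argument breaks down, and your proposed fix does not work.

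The problem is that $T_i\to S$ factors as the quasi-finite map $T_i\to T$ followed by the arbitrary morphism $\phi\colon T\to S$, whose fibres may well be positive-dimensional (and in the actual application inside the proof of Theorem \ref{truncated SMT for abelian schemes}, $T$ arises from a stratification of $\Lie(A/S)$ and has dimension much larger than $\dim S$). Consequently $X_{T_i}\to X$ is not quasi-finite, $\dim X_{T_i}$ can exceed $\dim X$, and a strict closed subscheme $\Xi_i\subsetneq X_{T_i}$ can have dense image in $X$: for a fixed $s\in S$, the subvarieties $(\Xi_i)_t\subsetneq X_s$ may vary with $t$ running over the (possibly positive-dimensional) fibre of $T_i\to S$ and together fill all of $X_s$. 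Your suggested ``preliminary reduction, using Proposition \ref{etale cover} and a stratification argument, to the situation $\dim T\leq \dim S$'' is not a valid move — nothing in the hypotheses of the proposition permits shrinking $T$, and stratifying $T$ by fibre dimension over $S$ does not remove the positive-dimensional fibres that cause the trouble.

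What the paper does instead, and what your proof is missing, is an intermediate application of Theorem \ref{descending abelian subschemes} (descent of abelian subschemes): the abelian subschemes $B_i\subset A_{T_i}$, a priori only defined over the $T_i$, are replaced by abelian subschemes $C_j\subsetneq A_{U_j}$ defined over irreducible varieties $U_j$ equipped with \emph{quasi-finite} morphisms $\nu_j\colon U_j\to S$, while still capturing all the one-parameter subgroups that occur. Once this descent has been performed, $X_{U_j}\to X$ is quasi-finite, so $\dim X_{U_j}\le\dim X$, and the strict closed subschemes $\Xi_j\subsetneq X_{U_j}$ produced by the inductive hypothesis (applied to the tuples $(X_{U_j}/C_j,\,C_j,\,X_{U_j},\,D_{U_j},\,L_{U_j})$) map into strict closed subsets of $X$; taking the union of the closures of these images then yields the desired $\Xi\subsetneq X$. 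This descent step is the crucial idea that makes the dimension count go through, and without it your argument as written has a genuine gap.
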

\begin{proof}

Thanks to Corollary \ref{entire curves contained in a proper subvariety}, there exists finitely many pairs $(\psi_i \colon T_i \to T, B_i)$, $i \in I$, where $T_i $ is an irreducible complex algebraic variety, $\psi_i$ is a morphism, and $B_i \subsetneq A_{T_i}$ is an abelian subscheme of $A_{T_i}$, such that for every $t \in T$ and every non-constant holomorphic map $f \colon \bC \to X_t$ whose image is contained \textit{up to translation} in $E_t$, the image of $f$ is contained in an orbit of $(B_i)_u$ for some $i \in I$ and some $u \in \psi_i^{-1}(t)$.\\

For every $i$, let $\mu_i \colon T_i \to S$ be the composition of $\psi_i$ with $\phi$. Then, for every $s\in S$ and every orbit $f \colon \bC \to X_s$ of a one-parameter subgroup of $A_s$ such that $f( \bC)$ is contained \textit{up to translation} in $E_t$ for some $t \in \phi^{-1}(s)$, the image of $f$ is contained in an orbit of $(B_i)_u$ for some $i \in I$ and some $u \in \mu_i^{-1}(s)$.\\

Thanks to Theorem \ref{descending abelian subschemes} and an immediate induction, it follows that there exists finitely many pairs $(\nu_j \colon U_j \to S, C_j)$, $j \in J$, where $U_j$ is an irreducible complex algebraic variety, $\nu_j$ is a quasi-finite morphism, and $C_j \subsetneq A_{U_j}$ is an abelian subscheme of $A_{U_j}$, such that,  for every $s\in S$ and every orbit $f \colon \bC \to X_s$ of a one-parameter subgroup of $A_s$ such that $f( \bC)$ is contained \textit{up to translation} in $E_t$ for some $t \in \phi^{-1}(s)$, the image of $f$ is contained in an orbit of $(C_j)_u$ for some $j \in J$ and some $u \in \nu_j^{-1}(s)$.\\

Since for every $j \in J$ the relative dimension of $X_{U_j} \to X_{U_j} \slash C_j$ is smaller than $d$, the induction applies and Theorem \ref{truncated SMT for abelian schemes} is verified by the tuples $\left(X_{U_j} \slash C_j, C_j, X_{U_j}, D_{U_j}, L_{U_j} \right)$. Therefore, for every $\epsilon >0$, let $\Xi_{j} = \Xi\left(X_{U_j} \slash C_j, C_j, X_{U_j}, D_{U_j}, L_{U_j}, \epsilon \right)  \subsetneq X_{U_j}$ be the closed subscheme whose existence is claimed in the statement. Let $\Xi$ be the union of the closure of the images of the $\Xi_{j}$'s in $X$. Since the $\nu_i$'s are quasi-finite, $\Xi$ is distinct from $X$. By construction, $\Xi$ satisfies the stated property.
\end{proof}

We now explain how to complete the proof of Theorem \ref{truncated SMT for abelian schemes} by successive reductions. Thanks to Proposition \ref{etale cover}, one can assume that $S$ is integral and smooth, and that $D$ is flat over $S$. Moreover, one can assume that the $A$-torsor $X$ is trivial and identify $X$ with $A$.

\begin{claim} 
One can assume that $D$ is integral. 
\end{claim}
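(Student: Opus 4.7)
The plan is to decompose the reduced Cartier divisor $D$ as a sum $D = D_1 + \cdots + D_n$ of its distinct integral components, apply the truncated Second Main Theorem separately to each $D_i$, and then control the overcounting that arises when comparing $\sum_i N^{(1)}(r, f, (D_i)_s)$ with $N^{(1)}(r, f, D_s)$.

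First I would shrink $S$ to a dense Zariski open subset, pushing its complement into the exceptional set $\Xi$ via its preimage in $X$, so as to arrange that every $D_i$ is flat over $S$ and that for every pair $i \neq j$ and every $s \in S$, the fibrewise intersection $D_{i,s} \cap D_{j,s}$ has codimension at least two in $X_s$ (semicontinuity of fibre dimension makes this achievable on a dense open). Since $\cO_X(D) = \cO_X(D_1) \otimes \cdots \otimes \cO_X(D_n)$, the characteristic function splits additively as $T(r, f, D_s) = \sum_{i=1}^n T(r, f, (D_i)_s)$. Applying the integral case of Theorem~\ref{truncated SMT for abelian schemes} (which is what we have reduced to) to each tuple $(S, A, X, D_i, L)$ with parameter $\epsilon/(2n)$ yields closed subschemes $\Xi_i \subsetneq X$ and estimates $T(r, f, (D_i)_s) \leq N^{(1)}(r, f, (D_i)_s) + (\epsilon/2n)\, T(r, f, L_s)\, ||_\epsilon$ whenever $f(\bC) \not\subset \Xi_i$.

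The main obstacle is that $\sum_i N^{(1)}(r, f, (D_i)_s)$ exceeds $N^{(1)}(r, f, D_s)$ at the points of $\bC$ where $f$ meets two or more components simultaneously. A pointwise computation (each such point contributes $k - 1$ on the left and $1$ on the right, where $k$ is the number of components through it) shows that the discrepancy is bounded by $\binom{n}{2} \cdot N^{(1)}(r, f, Y_s)$, where $Y := \bigcup_{i<j} D_i \cap D_j \subset X$. By our shrinking of $S$, the subscheme $Y$ has fibrewise codimension at least two in $X$, so Theorem~\ref{codim 2} applies with parameter $\epsilon' = \epsilon / (2\binom{n}{2})$: for every $s$ and every orbit $f$ of a one-parameter subgroup of $A_s$ not contained in $Y$, either $N^{(1)}(r, f, Y_s) \leq \epsilon'\, T(r, f, L_s)\, ||$, or $f(\bC)$ is contained in a fibre $E_t$ of some auxiliary relative effective Cartier divisor $E \subset X_T/T$.

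In the first alternative, summing the individual estimates and plugging in the bound on $N^{(1)}(r, f, Y_s)$ yields the desired inequality $T(r, f, D_s) \leq N^{(1)}(r, f, D_s) + \epsilon \cdot T(r, f, L_s)\, ||_\epsilon$. In the second alternative, Proposition~\ref{key induction step} (applied to the auxiliary divisor $E$) produces a proper closed subvariety $\Xi'$ of $X$ outside of which that same estimate already holds for such $f$. Taking $\Xi$ to be the union of $\Xi'$, of $\Xi_1 \cup \cdots \cup \Xi_n$, of $Y$, and of the preimage in $X$ of the complement of the shrunk base yields the required proper closed subscheme of $X$, reducing the proof of Theorem~\ref{truncated SMT for abelian schemes} to the case of an integral divisor. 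The delicate point is the bookkeeping: one must verify that none of these contributions engulfs all of $X$, which relies on Proposition~\ref{key induction step} producing a \emph{proper} closed subvariety and on the quasi-finiteness appearing in its proof.
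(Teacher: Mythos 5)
Your argument is correct and follows essentially the same route as the paper's own proof: decompose $D$ into its integral components $D_i$, invoke the integral case of Theorem~\ref{truncated SMT for abelian schemes} on each $D_i$, control the overcounting in $\sum_i N^{(1)}(r,f,(D_i)_s)$ versus $N^{(1)}(r,f,D_s)$ via the codimension-two locus $Y=\bigcup_{i<j}D_i\cap D_j$ together with Theorem~\ref{codim 2}, and absorb the alternative (curves landing in the auxiliary divisor $E$) via Proposition~\ref{key induction step}. The only cosmetic difference is that you bundle the pairwise intersection terms into a single $\binom{n}{2}\cdot N^{(1)}(r,f,Y_s)$ rather than the paper's $\sum_{i\neq j}N^{(1)}(r,f,(D_i\cap D_j)_s)$, and you place $Y$ directly into $\Xi$ rather than feeding $D$ itself through Proposition~\ref{key induction step}; both choices are harmless since $Y\subset D\subset\bigcup_i\Xi_i$.
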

\begin{proof}
Let $D = \cup_{i= 1}^k \, D_i$ be the decomposition in irreducible components of $D$. Then 
\[ \sum_{i=1}^k N^{(1)}\left(r, f, (D_i)_s \right)  \leq N^{(1)}\left(r, f, D_s \right) + \sum_{i \neq j} N^{(1)}\left(r, f, \left(D_i \cap D_j \right)_s \right) \]
and
\[ T(r, f, D_s) \leq \sum_{i=1}^k T\left(r, f, \left(D_i\right)_s \right) + O(1). \]

By shrinking $S$, one can assume that $\codim( \left(D_i \cap D_j \right)_s,  X_s) \geq 2$ for every $i \neq j$ and every $s \in S$. By applying Theorem \ref{codim 2} to $Y = \cup_{i \neq j}  \left(D_i \cap D_j \right)$, for every $\epsilon >0$, there exists a complex algebraic variety $T$, a morphism $\psi \colon T \to S$ and a relative effective Cartier divisor $W$ in $X_T \slash T$ such that, for every $s\in S$ and every orbit $f \colon \bC \to X_s$ of a one-parameter subgroup of $A_s$ such that $f(\bC) \not \subset Y$, one has either 
\begin{enumerate}
    \item $  N^{(1)}(r, f, Y_s) \leq \epsilon \cdot T(r,f,L_s) || $, or
    \item $f( \bC)$ is contained in $W_t$ for some $t \in \psi^{-1}(s)$. (Here, we identify $X_t$ with $X_s$ using $\psi$.)
\end{enumerate}

Thanks to Proposition \ref{key induction step}, for every $\epsilon > 0$, there exists $\Xi \subsetneq X$ a closed algebraic subvariety such that, for every $s\in S$ and every orbit $f \colon \bC \to X_s$ of a one-parameter subgroup of $A_s$ such that $f( \bC)\subsetneq \Xi$ and $f( \bC)$ is contained \textit{up to translation} in $D_s$ or in $W_t$ for some $t \in \phi^{-1}(s)$ (where $X_t$ is identified with $X_s$ using $\phi$), the estimate
\[ T(r, f, D_s) \leq N^{(1)}(r, f, D_s) +  \epsilon \cdot T(r, f, L_s) \, || \,  \]
holds. 

If we are in the first case of the alternative, by applying Theorem \ref{truncated SMT for abelian schemes} to every $D_i$, we get that there exists a closed subscheme $ \Xi^\prime \subsetneq X$ such that for every orbit $f \colon \bC \to X_s$ of a one-parameter subgroup of $A_s$ whose image is not contained in $\Xi^\prime$, the following estimate holds:
\[ \sum_{i=1}^k T\left(r, f, \left(D_i\right)_s \right) \leq  \sum_{i=1}^k N^{(1)}\left(r, f, (D_i)_s \right) + k \cdot \epsilon \cdot T(r, f, L_s) \, || \, . \]
Therefore, in both cases, the estimate
\[ T(r, f, D_s) \leq N^{(1)}(r, f, D_s) + (k+1) \cdot \epsilon \cdot T(r, f, L_s) \, || \,  \]
holds for every orbit $f \colon \bC \to X_s$ of a one-parameter subgroup of $A_s$ whose image is not contained in $\Xi \cup \Xi^\prime$.
\end{proof}

\begin{claim} 
One can assume that the generic fibre of $D$ is geometrically integral. A fortiori, $D_s$ is integral for every $s \in S$.
\end{claim}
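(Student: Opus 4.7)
The plan is to apply Lemma~\ref{reduce to geometrically integral} to the flat proper morphism $D \to S$. This will produce a dense Zariski open $U \subset S$ and a surjective finite étale morphism $V \to U$ such that every irreducible component of the reduced effective Cartier divisor $D_V$ has geometrically integral generic fibre over $V$. By Proposition~\ref{etale cover}, it then suffices to prove Theorem~\ref{truncated SMT for abelian schemes} for the base-changed tuple $(V, A_V, X_V, D_V, L_V)$.

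Because the generic fibre of the initial $D$ need not have been geometrically integral, the divisor $D_V$ may fail to be integral, splitting into several irreducible components, each however with geometrically integral generic fibre over $V$ by construction. I would then re-apply the previous claim to this new tuple, which further reduces us to the case where $D_V$ itself is integral. The resulting integral divisor must coincide with one of the irreducible components above, so its generic fibre is geometrically integral. Relabeling $V$ as $S$, we obtain the first assertion of the claim.

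For the ``a fortiori'' part, I would invoke that geometric integrality of fibres is a Zariski-open condition on the base of any flat proper morphism of finite presentation (EGA~IV$_3$, 9.7.7). Since the generic fibre of $D \to S$ is now geometrically integral, the locus of $s \in S$ at which $D_s$ is geometrically integral is a dense Zariski open of $S$. Shrinking $S$ to this open --- harmless by Proposition~\ref{etale cover} applied to the corresponding open immersion --- then gives that $D_s$ is integral for every $s \in S$.

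The only subtlety worth flagging is the double invocation of the previous claim: once, implicitly, to set up the running hypothesis that $D$ is integral, and once, explicitly, after the étale base change, to collapse the possibly non-integral $D_V$ to a single component. Both applications are legitimate since the first claim has already been established in full generality at this relative dimension, and the étale base change together with further harmless shrinking preserves every other hypothesis on $(S, A, X, L)$ (integrality and smoothness of the base, triviality of the torsor, relative ampleness of $L$, flatness of the divisor).
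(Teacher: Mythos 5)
Your proof is correct and follows the paper's (very terse) approach, which simply cites Proposition~\ref{etale cover} and Lemma~\ref{reduce to geometrically integral}. You have rightly made explicit the two steps the paper leaves implicit: re-applying the previous claim to handle the fact that $D_V$ need not remain integral after the finite étale base change (its components, however, do have geometrically integral generic fibre by construction), and shrinking $S$ by openness of geometric integrality of fibres for the flat proper morphism $D \to S$ to obtain the ``a fortiori'' assertion.
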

\begin{proof}
It is a consequence of Proposition \ref{etale cover} and Lemma \ref{reduce to geometrically integral}.
\end{proof} 

\begin{claim}
One can assume that the geometric generic fibre of $D$ is of general type. Therefore, up to shrinking $S$, one can assume that $D_s$ is of general type for every $s \in S$.
\end{claim}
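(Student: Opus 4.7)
The plan is to apply the relative Ueno fibration (Theorem~\ref{relative Ueno fibration}) to the divisor $D \subset X$. Since the generic fibre of $D \to S$ is geometrically integral by the previous claim, this produces, after shrinking $S$, an abelian subscheme $B \subset A$ such that the image $Z \subset X/B$ of $D$ under the quotient $p \colon X \to X/B$ satisfies: $Z \to S$ is of general type, $D = p^{-1}(Z)$ as Cartier divisors, and $D \to Z$ is a $B_Z$-torsor. Since $D \to Z$ is smooth and surjective, $Z$ inherits from $D$ the properties of being a reduced relative effective Cartier divisor of codimension one in $X/B$ with geometrically integral generic fibre.

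If $B$ is trivial (i.e.\ equal to the identity section of $A$), then $p$ induces an isomorphism $D \xrightarrow{\sim} Z$, so $D \to S$ is itself of general type. The construction of $B$ from the identity component of the normalizer $N_A(D)$ further guarantees, after shrinking $S$ to keep the fibre dimension of this normalizer constant, that $N_{A_s}(D_s)$ has trivial identity component for every $s \in S$; by Ueno's classical theorem, this is equivalent to $D_s$ being of general type for every $s$, completing the claim in this case.

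Otherwise $B$ has positive relative dimension, so $X/B \to S$ has strictly smaller relative dimension than $X \to S$ and the induction hypothesis on relative dimension applies to the tuple $(S, A/B, X/B, Z, L')$ for any fixed relatively ample $L'$ on $X/B$. For every $\epsilon' > 0$, it supplies a closed subscheme $\Xi_Z \subsetneq X/B$ realizing the truncated SMT estimate for $Z$. Set $\Xi := D \cup p^{-1}(\Xi_Z) \subsetneq X$. For an orbit $f \colon \bC \to X_s$ of a one-parameter subgroup of $A_s$ with $f(\bC) \not\subset \Xi$, the composition $p \circ f$ is an orbit of a one-parameter subgroup of $(A/B)_s$. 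If $p \circ f$ is constant, then $f(\bC)$ lies in a fibre $F$ of $p$ with $F \not\subset D_s$ (otherwise $f(\bC) \subset D \subset \Xi$), so $F \cap D_s = \varnothing$, $\cO(D_s)_{|F}$ is trivial, and $T(r,f,D_s) = O(1)$ makes the estimate trivial. Otherwise $p \circ f$ is non-constant with image not contained in $\Xi_Z$, and the estimate for $Z$, combined with the identities $N^{(1)}(r, f, D_s) = N^{(1)}(r, p\circ f, Z_s)$ and $T(r, f, D_s) = T(r, p \circ f, Z_s) + O(1)$ (both consequences of $D = p^{-1}(Z)$), together with a comparison $T(r, p\circ f, L'_s) \leq C \cdot T(r, f, L_s) + O(1)$ (valid for some $C$ since $L$ is relatively ample, so $L^{\otimes C} \otimes (p^* L')^{-1}$ is relatively effective for $C$ large), yields the desired inequality upon choosing $\epsilon' = \epsilon/C$.

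The main technical point is the relative Ueno construction in families, which is precisely what Theorem~\ref{relative Ueno fibration} provides; once that is available, the only substantive remaining issue is the openness argument promoting \emph{geometric generic fibre of general type} to \emph{every fibre of general type}, which I handle via the normalizer description above, and the routine bookkeeping with characteristic and counting functions under the pullback $p$.
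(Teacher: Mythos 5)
Your proof is correct and follows essentially the same route as the paper: you apply the relative Ueno fibration (Theorem~\ref{relative Ueno fibration}) to $D$ to produce an abelian subscheme $B \subset A$ and a divisor $D'$ (your $Z$) in $X/B$ with $D = \pi^{-1}(D')$, compare a relatively ample line bundle on $X/B$ with $L$ via Kodaira's lemma, and reduce by the induction on relative dimension to the quotient tuple $(S, A/B, X/B, D', M)$, splitting into cases according to whether $\pi \circ f$ is constant. Your observation that the constant case is trivial because $D = \pi^{-1}(D')$ and $f(\bC) \not\subset D$ force $f(\bC) \cap D_s = \varnothing$, hence $T(r,f,D_s) = O(1)$, is a modest simplification of the paper's appeal to the inductive hypothesis for the tuple $(X/B, B_{X/B}, X, D, L)$ (which in fact also only yields the trivial estimate since $D_t$ is empty on the relevant fibres), and the normalizer-dimension upper-semicontinuity argument you supply for the ``Therefore'' clause is left implicit in the paper.
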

\begin{proof}
If the geometric generic fibre of $D$ is not of general type, then by Theorem \ref{relative Ueno fibration}, up to shrinking $S$, there exists a non-trivial abelian $S$-subscheme $B \subset A$ and an integral effective Weil divisor $D^\prime \subset X \slash B$ such that $D = \pi^{-1} \left(D^\prime \right)$ where $\pi \colon X \to X \slash B$ is the quotient morphism. The divisor $D^\prime $ is a priori only a Weil divisor, but it can be made Cartier by shrinking $S$.\\
Choose a line bundle $M \to X \slash B$ which is relatively ample with respect to $S$. Then, up to shrinking $S$, there exists a positive integer $m$ and an effective Cartier divisor $E \subset X$ such that $L^{\otimes m} = \pi^\ast M  \otimes \cO_{X}(E)$. In particular, for every orbit $f \colon \bC \to X_s$ of a one-parameter subgroup of $A_s$ whose image is not contained in $E$, one has 
\[ T(r, \pi \circ f,  M_s) = T(r,f, \pi^\ast M_s) \leq m \cdot T(r,f, L_s) \]
thanks to Theorem \ref{Nevanlinna's inequality}.\\
Since the relative dimension of $X \to X \slash B$ and $ X \slash B \to S$ are both smaller than $d$, by induction one can apply Theorem \ref{truncated SMT for abelian schemes} to the tuples $(X \slash B, B_{X \slash B}, X, D, L)$ and $(S, A \slash B, X \slash B, D^\prime, M)$. Fix $\epsilon > 0$ and set $\Xi = \Xi(A, X, D, L, \epsilon)$ with
\[ \Xi := D \cup \Xi(X \slash B, B_{X \slash B}, X, D, L, \epsilon) \cup \pi^{-1} \left( \Xi(S, A \slash B, X \slash B, D^\prime, M, \frac{\epsilon}{m}) \right).\]
Let $f \colon \bC \to X_s$ be an orbit of a one-parameter subgroup of $A_s$ such that $f (\bC) \not \subset \Xi$. The composite morphism $\pi \circ f \colon \bC \to (X \slash B)_s$ is either constant or an orbit of a one-parameter subgroup of $(A \slash B)_s$. In the first case, the image of $f$ is contained in a fibre $X_t$ of $X \to X \slash B$ for some $t \in X \slash B$, and $f$ can be seen as an orbit $f \colon \bC \to X_t$ of a one-parameter subgroup of $B_t$ to which one can apply Theorem \ref{truncated SMT for abelian schemes} to get the wanted estimate
\[ T(r,f,D_s) \leq N^{(1)}(r,f, D_s) + \epsilon \cdot T(r,f,L_s) \; ||_{\epsilon} . \]
In the second case, by applying Theorem \ref{truncated SMT for abelian schemes} to $\pi \circ f \colon \bC \to \left(X \slash B \right)_s$ and observing that $N^{(1)}(r,f, D_s) =  N^{(1)}(r,\pi \circ f, D^\prime_s)$ since $f (\bC) \not \subset D$, we obtain also that
\[ T(r,f,D_s) \leq N^{(1)}(r,f, D_s) + \epsilon \cdot T(r,f,L_s) \; ||_{\epsilon} . \]
\end{proof}

We are left to consider the tuples $\left(S, A, X, D, L \right)$, where in addition $D_s \subset X_s$ is an integral effective Cartier divisor of general type for every $s \in S$. In particular, the following proposition implies that $\codim \left(  J_1(D/S)_t , J_1(X/S)_t  \right) \geq 2$ for every $t \in \Lie (A/S )$.

\begin{prop}
Let $A$ be a complex abelian variety and $W \subset A$ be an integral closed subscheme such that $\cap_{w \in W} T_w W \neq \{ 0 \}$ (for every $w \in W$, we use the canonical isomorphism $T_w A = \Lie(A)$ to see every $T_w W$ as a linear subspace of $\Lie(A)$). Then $W$ is not of general type.
\end{prop}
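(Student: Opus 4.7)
The plan is to turn the pointwise tangency hypothesis into a global translation invariance, and then invoke the Ueno fibration theorem (already proved in the paper as Theorem \ref{relative Ueno fibration}) to conclude.

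First, I would fix a nonzero vector $v \in \bigcap_{w \in W} T_w W$ and consider the translation-invariant holomorphic vector field $\xi_v$ on $A$ that it generates. By hypothesis, $\xi_v$ is tangent to $W$ at every point of $W$ (including singular points, where $T_w W$ denotes the Zariski tangent space). The flow of $\xi_v$ on $A$ is the analytic map $(t, a) \mapsto a + \exp(tv)$, where $\exp \colon \Lie(A) \to A$ is the exponential. Since $\xi_v$ is tangent to $W$ everywhere, integration shows that this flow preserves $W$: for all $t \in \bC$, $W + \exp(tv) \subset W$, and by symmetry $W + \exp(tv) = W$.

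Next, let $G_v \subset A$ be the Zariski closure of $\{\exp(tv) : t \in \bC\}$. It is a connected algebraic subgroup of $A$, hence a positive-dimensional abelian subvariety $B \subset A$, and by the previous step $B + W = W$. In particular, $B$ is contained in the identity component of the normalizer $N_A(W)$ of $W$ in $A$ (in the sense introduced before Theorem \ref{relative Ueno fibration}), so this identity component, call it $B'$, is a positive-dimensional abelian subvariety of $A$ that stabilizes $W$.

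Now I would apply Theorem \ref{relative Ueno fibration} with $S = \Spec \bC$ a point: letting $Z$ denote the image of $W$ under the quotient $A \to A/B'$, the morphism $W \to Z$ is a torsor under the abelian variety $B'_Z$, and $Z$ is of general type. Since étale-locally $W \to Z$ is isomorphic to the projection $B' \times Z \to Z$, one has $\kappa(W) = \kappa(B') + \kappa(Z) = \kappa(Z) \leq \dim Z = \dim W - \dim B' < \dim W$, so $W$ is not of general type.

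The main step is the passage from the infinitesimal condition $v \in T_w W$ for all $w$ to the global condition $B + W = W$; the rest is a direct appeal to Ueno's structure theorem. I do not anticipate a serious obstacle, since the tangency of a translation-invariant vector field to a closed analytic subset on all of its points is a standard sufficient condition for its flow to preserve that subset.
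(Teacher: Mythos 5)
Your argument is correct and follows essentially the same route as the paper: integrate the infinitesimal tangency to deduce that $W$ is preserved by the flow of a one-parameter subgroup, observe that the Zariski closure of that subgroup is a positive-dimensional abelian subvariety $B$ stabilizing $W$, and conclude from the resulting fibration $W \to W/B$ with fibres isomorphic to $B$. The paper's own proof compresses the integration step into the phrase ``by assumption'' and finishes directly: the fibration $W \to A/B$ has every fibre an abelian variety, so $W$ cannot be of general type (this is an application of Iitaka's easy addition, already cited earlier in the paper). You instead invoke Theorem \ref{relative Ueno fibration} and then a Kodaira-dimension computation; that works, but the equality $\kappa(W) = \kappa(B') + \kappa(Z)$ is stronger than you need and costs more to justify on a possibly singular $W$. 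The inequality $\kappa(W) \le \kappa(B') + \dim Z = \dim Z < \dim W$ from easy addition (applied after base-changing $W \to Z$ along a resolution of $Z$, so that the total space becomes smooth because the torsor is smooth over its base) suffices and avoids that subtlety. Two other small remarks: you do not need the Bloch--Ochiai theorem in the step you describe, since the Zariski closure of the image of a group homomorphism $\bC \to A$ is automatically a connected algebraic subgroup of $A$, hence an abelian subvariety --- the paper cites Bloch--Ochiai here, but it is not required; and you should make sure the integration argument uses the Zariski tangent space interpretation of $T_w W$ at singular points, as you note, since otherwise the tangency hypothesis would be weaker than stated.
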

\begin{proof}
By assumption, the closed subscheme $W \subset A$ is stabilized under the action of a one-parameter subgroup $\bC \to A$, therefore it is stabilized under the Zariski closure of $\bC \to A$, which is a non-trivial abelian subvariety $B$ of $A$ by the Bloch-Ochiai Theorem \cite[Theorem 3]{Kawamata-Bloch}. It follows that the composite map $W \to A \to A \slash B$ is a fibration whose fibers are all isomorphic to $B$, hence $W$ cannot be of general type.
\end{proof}

Thanks to Theorem \ref{SMT for abelian schemes}, there exists an integer $\rho \geq 1$ such that  for every $s\in S$ and every orbit $f \colon \bC \to X_s$ of a one-parameter subgroup $\bC \to A_s$, one has either that
\begin{enumerate}
    \item the image of a translate of $f$ is contained in $D_s$, or
    \item $T (r, f, D_s) \leq  N^{(\rho)}(r, f, D_s) +o\left(T \left(r,f,L_s \right)\right) ||$.
\end{enumerate}

The first case is handled by Proposition \ref{key induction step}, therefore it is sufficient to consider the second case.  If $\rho = 1$ we are done, so that we can assume that $\rho \geq 2$. Observing that one has
\[  N^{(2)}\left(r, f, D_s \right) -  N^{(1)}\left(r, f, D_s \right) \leq N^{(1)} \left(r, J_1(f), J_1(D / S)_s \right), \]
and 
\[ N^{(\rho)}(r, f, D_s) - N^{(1)}(r, f, D_s) \leq (\rho - 1) \cdot \left( N^{(2)}(r, f, D_s) - N^{(1)}(r, f, D_s) \right), \]
it follows that
\[ N^{(\rho)}(r, f, D_s) \leq N^{(1)}(r, f, D_s) + (\rho - 1) \cdot N^{(1)}(r, J_1(f), J_1 (D / S)_s). \]

Therefore, it remains to bound the function $r \mapsto N^{(1)}(r, J_1(f), J_1 (D / S)_s)$. Note that $J_1(f) \colon \bC \to J_1(X/ S)_s$ is an orbit of a one-parameter subgroup in $J_1(X / S) = X \times_S \Lie (A /S)$, where the later is seen as a torsor over $\Lie (A /S)$ under the abelian $\Lie (A /S)$-scheme $J_1(A / S) = A \times_S \Lie (A /S)$. Let $M \to J_1(X / S)$ denote the pull-back of $L \to X$ along the morphism $J_1(X / S) \to X$. Since $J_1 (D/ S)_t$ has codimension at least $2$ in $J_1 (X/S)_t$ for every $t \in \Lie (A /S)$, one can apply Theorem \ref{codim 2} to the tuple $\left(\Lie (A/S), J_1 (A/S) \to \Lie (A/S), J_1 (X/S) \to \Lie (A/S), M, J_1 (D/S) \right)$.

Therefore, for every $\epsilon >0$, there exists a complex algebraic variety $U$, a morphism $\psi \colon U \to \Lie (A/S)$ and a relative effective Cartier divisor $W$ in $X_U \slash U$ such that, for every $t \in \Lie (A/S)$ and every orbit $g \colon \bC \to J_1(X/S)_t$ of a one-parameter subgroup $\bC \to J_1(A/S)_t$ such that $g(\bC) \not \subset J_1 (D/S)$, one has either 
\begin{enumerate}
    \item  $  N^{(1)}(r,  g, J_1(D/S)) \leq \frac{\epsilon}{2(\rho - 1)} \cdot T(r,g,M_t) || $, or
    \item $g( \bC)$ is contained in $W_u$ for some $u \in \psi^{-1}(t)$. (Here, we identify $X_u$ with $X_u$ using $\psi$.)
\end{enumerate}

By applying this to $g = J_1(f)$, the first possibility becomes  
\[  N^{(1)}(r,  J_1(f) , J_1(D/S)) \leq \frac{\epsilon}{2(\rho - 1)} \cdot T(r,f, L_s) || \]
 thanks to (\ref{relating T for f and J_1(f)}), so that we get the estimate
 \[T (r, f, D_s) \leq  N^{(1)}(r, f, D_s) +  \frac{\epsilon}{2} \cdot T(r,f,L_s)  +o\left(T(r,f,L_s)\right) ||,   \]
and the result follows. On the other hand, thanks to Proposition \ref{key induction step}, the second possibility is taken care by induction.

%%%%%%%%%%%%%%%%%%%%%%%%%%%%%%%%%%%%%%%%%%%%%%%%%%%%%%%%%%%%%%%%%%%%%%%%%%%%%%%%%%
%%%%%%%%%%%%%%%%%%%%%%%%%%%%%%%%%%%%%%%%%%%%%%%%%%%%%%%%%%%%%%%%%%%%%%%%%%%%%%%%

\section{Proofs of Theorem \ref{relative Lang conjecture in the abelian case} and Theorem \ref{main corollary}}

\subsection{A reduction}
Let $\cP$ be a property of proper morphisms of complex algebraic varieties. Assume that the following properties are satisfied:
\begin{enumerate}
	\item For every complex algebraic variety $X$, the identity morphism $X \to X$ satisfies $\cP$,
    \item Let $f \colon X \to Y$ be a proper morphism of complex algebraic varieties satisfying $\cP$. If $g \colon Z \to X$ is a finite morphism of complex algebraic varieties, then the composite morphism $f \circ g \colon Z \to Y$ satisfies $\cP$.
    \item Let $f \colon X \to Y$ be a proper morphism of complex algebraic varieties satisfying $\cP$. If $Z \rightarrow Y$ is a  morphism of complex algebraic varieties, then the induced proper morphism $f_{Z} \colon X_Z = X \times_Y Z \to Z$ satisfies $\cP$.
\end{enumerate}
\begin{prop}\label{property P-Stein factorization}
Let $X \to Y$ be a proper morphism with Stein factorization $X \to Z \to Y$. If $X \to Y$ satisfies $\cP$, then $X \to Z$ satisfies $\cP$. 
\end{prop}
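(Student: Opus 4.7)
The plan is to base change $X \to Y$ along the finite morphism $Z \to Y$ and then exploit the graph of $X \to Z$ as a finite morphism landing in the base change, so that both assumptions (2) and (3) together force $\cP$ to propagate from $X \to Y$ to $X \to Z$.

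More precisely, form the cartesian square
\[
\begin{tikzcd}
X_Z = X \times_Y Z \arrow[d, "p_Z"] \arrow[r, "p_X"] & X \arrow[d] \\
Z \arrow[r] & Y.
\end{tikzcd}
\]
By assumption (3) applied to $X \to Y$ and the morphism $Z \to Y$, the induced morphism $p_Z \colon X_Z \to Z$ satisfies $\cP$. The datum of the morphism $X \to Z$ (from the Stein factorization) together with the identity $X \to X$ defines, by the universal property of the fibre product, a canonical $Y$-morphism $\sigma \colon X \to X_Z$, which is a section of $p_X$. Since $p_X$ is the base change of the finite (in particular separated) morphism $Z \to Y$, it is separated, and hence its section $\sigma$ is a closed immersion; in particular, $\sigma$ is a finite morphism of complex algebraic varieties.

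At this point assumption (2) applies to the pair $(p_Z, \sigma)$ and yields that the composition $p_Z \circ \sigma \colon X \to Z$ satisfies $\cP$. By construction this composition is precisely the first factor $X \to Z$ of the Stein factorization, which is what we wanted.

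The only point requiring a moment of care is the verification that $\sigma \colon X \to X_Z$ is finite, which reduces to the fact that $Z \to Y$ (being finite) is separated, so that sections of $p_X$ are closed immersions; no further work is needed, and the argument does not use anything about the Stein factorization beyond the existence of the intermediate morphism $X \to Z$ over $Y$.
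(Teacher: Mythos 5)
Your proof is correct and takes essentially the same route as the paper: both write $X \to Z$ as the graph closed immersion $X \hookrightarrow X\times_Y Z$ (finite) followed by the base-changed projection $X\times_Y Z \to Z$, then invoke properties (3) and (2) in turn. You supply the (correct) justification that the graph is a closed immersion because $Z \to Y$ is separated, which the paper leaves implicit, and you rightly note the argument uses only that $Z \to Y$ is a separated factor, not the full Stein property.
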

\begin{proof}
The morphism $X \to Z$ can be written as the composition of the closed immersion $X \subset X \times_Y Z$ with the morphism $ X \times_Y Z \to Z$ obtained by base-change from the morphism $X \to Y$.
\end{proof}

\begin{prop}\label{first reduction}
Fix $\ast \in \{h,ab, alg \}$. Assume that the following holds:
\begin{enumerate}
    \item Let $X$ be a proper integral complex algebraic variety such that the structural morphism $X \to \mathrm{Spec}(\bC)$ satisfies $\cP$. If the complex algebraic variety $X$ is not of general type, then $\Sp_\ast(X) = X$.
    \item Let $f \colon X \to Y$ be a surjective proper morphism between two integral complex algebraic varieties. Assume that $f$ satisfies $\cP$ and that its generic fibre is geometrically integral. If $f$ is of general type, then $\Sp_\ast(X \slash Y)$ is not Zariski dense in $X$.
\end{enumerate}
Then, for every proper morphism $X \to Y$ between complex algebraic varieties satisfying $\cP$, the following holds:
\begin{enumerate}
\item $\Sp_\ast(X \slash Y)$ is a Zariski closed algebraic subset of $X$,
\item $\Sp_\ast(X \slash Y) \neq X$ if and only if $X \to Y$ is of general type.
\end{enumerate}  
\end{prop}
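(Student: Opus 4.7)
The plan is to prove both (1) and (2) simultaneously by induction on $\dim X$. The base case $\dim X = 0$ is immediate: $\Sp_\ast(X/Y) = \varnothing$ is Zariski closed, and $X$ is of general type by the convention for zero-dimensional varieties. For the induction step, I perform a chain of standard reductions using the preservation properties of $\cP$ and the results on special sets and general-type morphisms from earlier sections. First, Proposition \ref{special sets-union} combined with property (2) of $\cP$ (closed immersions are finite) reduces to $X$ integral. Replacing $Y$ by the scheme-theoretic image $f(X)$ and invoking Proposition \ref{composition-special sets} together with property (3) of $\cP$ reduces to $f$ surjective. A Stein factorization $X \to Z \to Y$, combined with Propositions \ref{property P-Stein factorization}, \ref{composition-special sets}, and \ref{Stein factorization-general type}, reduces further to the case where $f$ is a fibration with $X$ and $Y$ integral.

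Suppose first that $f$ is of general type. To invoke hypothesis (2), I still need the generic fibre of $f$ to be geometrically integral; this is arranged using Lemma \ref{reduce to geometrically integral}, after passing to an \'etale cover $V \to U \subset Y$ (on which $\cP$ is preserved by property (3)), splitting into irreducible components, and transferring the resulting non-density conclusion back via Corollary \ref{base change-special sets}. Applying hypothesis (2) then yields that $\Sp_\ast(X/Y)$ is not Zariski-dense in $X$. Let $\Xi$ denote its Zariski closure, a proper closed subset with $\dim \Xi < \dim X$; the closed immersion $\Xi \hookrightarrow X$ is finite, so $\Xi \to Y$ inherits $\cP$ by property (2). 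Crucially, every positive-dimensional subvariety of a fibre of $X \to Y$ that is not of general type (and similarly for images of non-constant rational maps from abelian varieties, and for entire curves) is contained in $\Sp_\ast(X/Y) \subset \Xi$, and is automatically a witness of the same kind for $\Xi \to Y$; hence $\Sp_\ast(X/Y) = \Sp_\ast(\Xi/Y)$. The induction hypothesis applied to $\Xi \to Y$ then gives that $\Sp_\ast(\Xi/Y)$ is Zariski closed, which implies $\Sp_\ast(X/Y)$ is closed and properly contained in $X$.

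If instead $f$ is not of general type, I invoke Theorem \ref{generization general type} (assuming projectivity, as in the applications of interest, or via a Chow-lemma modification) to conclude that no fibre of $f$ is of general type. For each closed point $y \in Y$, and each irreducible component $X_{y,i} \subset X_y$, the morphism $X_{y,i} \to \Spec(\bC)$ inherits $\cP$ by properties (2) and (3) of $\cP$, so hypothesis (1) gives $\Sp_\ast(X_{y,i}) = X_{y,i}$; assembling over components yields $\Sp_\ast(X_y) = X_y$, and the union over $y$ gives $\Sp_\ast(X/Y) = X$, so both (1) and (2) hold in this case. The main obstacle is the \'etale-cover step needed to guarantee a geometrically integral generic fibre, where one must carefully transfer non-density between $X_V \to V$ (or its irreducible components) and $X \to Y$; a secondary subtlety is verifying that $\cP$ descends to every closed subscheme of $X$, which is precisely what property (2) of $\cP$ applied to closed immersions provides and is what allows the inductive step on $\Xi$ to go through.
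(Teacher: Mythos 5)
Your proof is correct, and the structural outline matches the paper's reductions (irreducible components, Stein factorization, composition lemmas for special sets), but you take a genuinely different route in two places. First, for the Zariski-closedness, you use induction on $\dim X$: since $\Sp_\ast(X/Y)=\Sp_\ast(\Xi/Y)$ where $\Xi$ is the closure, and $\dim\Xi<\dim X$, you apply the inductive hypothesis to $\Xi\to Y$. The paper instead avoids induction entirely by a dichotomy: once statements (a) ``not general type $\Rightarrow$ $\Sp_\ast=$ everything'' and (b) ``general type $\Rightarrow$ $\Sp_\ast$ not dense'' are established for all $\cP$-morphisms, the paper observes that $\Sp_\ast(\bar S/Y)=\Sp_\ast(X/Y)$ is dense in $\bar S$, hence by the contrapositive of (b) the morphism $\bar S\to Y$ is not of general type, hence by (a) one gets $\Sp_\ast(\bar S/Y)=\bar S$, i.e.\ $\Sp_\ast(X/Y)=\bar S$ is closed. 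Your induction is a detour: both arguments land on $\Sp_\ast(\Xi/Y)=\Xi$, but the paper gets there non-inductively. Second, for the geometric integrality of the generic fibre required by hypothesis (2), the paper reduces to $X$ normal (Propositions \ref{normalization-general type} and \ref{special sets-normalization}, plus property (2) of $\cP$ for the normalization), so that after Stein factorization Proposition \ref{fibration-generic fibre} gives geometric integrality directly. You skip normalization and instead invoke Lemma \ref{reduce to geometrically integral} via an \'etale cover, which works but requires splitting $X_V$ into components and isolating the one of general type, then transferring non-density back via Corollary \ref{base change-special sets}; this is workable but noticeably more involved. Your flag on the projectivity required for Theorem \ref{generization general type} is a fair observation: the abstract framework of Proposition \ref{first reduction} does not formally assume $\cP$ implies projectivity, and the paper itself silently uses Theorem \ref{generization general type} in this step; in the actual application $\cP$ does imply projectivity, so both proofs rely on this.
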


\begin{proof}
Let $f \colon X \to Y$ be a proper morphism between two complex algebraic varieties. Assume that $f$ satisfies $\cP$. Up to replacing $Y$ with the image of $X$ in $Y$, one can assume that $X \to Y$ is surjective.\\

Assume that $f$ is not of general type, and let us prove that $\Sp_\ast\left(X \slash Y\right) = X$. If $Z \subset X$ is an irreducible component of $\left(X_y \right)^{\mathrm{red}}$ for some $y \in Y$, then $Z$ is not of general type thanks to Theorem \ref{generization general type}. Observe that $Z \to \Spec(\bC)$ satisfies $\cP$, since $\cP$ is stable by base-change and closed immersion. It follows from the assumptions that $\Sp_\ast\left(Z \slash Y\right) = Z$. This proves that $\Sp_\ast \left(X \slash Y \right) = X$.\\

Assume that $f$ is of general type, and let us prove that $\Sp_\ast \left(X \slash Y \right)$ is not Zariski-dense in $X$. Thanks to Proposition \ref{special sets-union}, one can assume that $X$ is integral. Thanks to Proposition \ref{normalization-general type} and Proposition \ref{special sets-normalization}, one can assume moreover that $X$ is normal. Let $X \to Z \to Y$ be the Stein factorization of the proper morphism $X \to Y$. Thanks to Proposition \ref{property P-Stein factorization}, since $X \to Y$ satisfies $\cP$, $X \to Z$ satisfies $\cP$. Moreover, thanks to Proposition \ref{Stein factorization-general type}, the fibration $X \to Z$ is of general type. Since $X$ is normal and integral, $Z$ is normal and integral by Proposition \ref{fibration-normal and integral}, and the generic fibre of $X \to Z$ is geometrically integral by Proposition \ref{fibration-generic fibre}. Using the assumptions, it follows that $\Sp_\ast(X \slash Z)$ is not Zariski dense in $X$. Since, $\Sp_\ast(X \slash Y) = \Sp_\ast(X \slash Z)$, we get that $\Sp_\ast(X \slash Y)$ is not Zariski dense in $X$.\\

Finally, let us prove that $\Sp_\ast(X \slash Y)$ is a Zariski closed subset of $X$. Assume first that $X \to Y$ is not of general type. Then we have seen that $\Sp_\ast(X \slash Y) = X$, so that $\Sp_\ast(X \slash Y)$ is clearly Zariski closed in $X$. Assume now that $X \to Y$ is of general type. Then we know that the set $S :=\Sp_\ast(X \slash Y)$ is not Zariski-dense in $X$. Let $\bar S$ be its Zariski closure in $X$. Clearly $\Sp_\ast(\bar S \slash Y) = \Sp_\ast(X \slash Y) = S$, hence $\Sp_\ast(\bar S \slash Y)$ is evidently Zariski dense in $\bar S$. It follows that $\bar S \to Y$ is not of general type, so that $\Sp_\ast(\bar S \slash Y) = \bar S$ by the preceding discussion. This proves that $S = \bar S$ is Zariski closed in $X$.
\end{proof}

%%%%%%%%%%%%%%%%%%%%%%%%%%%%%%%%%%%%%%%%%%%%%%%%%%
\subsection{The surjective case}

\begin{thm}\label{key result-finite surjective}
Let $S$ be an integral complex algebraic variety. Let $A \to S$ be an abelian scheme. Let $Y \to S$ be an $A$-torsor over $S$. Let $\pi \colon X \to Y$ be a finite surjective morphism. Assume that the generic fibre of the composite morphism $X \to S$ is geometrically integral. If $X \to S$ is of general type, then $\Sp_h(X / S)$ is not Zariski dense in $X$.
\end{thm}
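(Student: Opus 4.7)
My plan is to apply the truncated Second Main Theorem (Theorem \ref{truncated SMT for abelian schemes}) on the $A$-torsor $Y$ to the branch divisor of $\pi\colon X \to Y$, and then transfer the resulting estimate to $X$ via Riemann--Hurwitz. Since $\omega_{Y/S}$ is fibrewise trivial, the general-type hypothesis on $X \to S$ forces the ramification divisor $R \subset X$ of $\pi$ to be relatively big, so controlling entire curves by $R$ is essentially controlling them by a big divisor in the canonical class. First I would perform the standard reductions: by Propositions \ref{normalization-general type} and \ref{special sets-normalization} I may replace $X$ by its normalization; by Corollary \ref{base change-special sets} I may pass étale-locally on $S$ and identify $Y$ with $A$; and by shrinking $S$ I may assume all relevant morphisms are sufficiently smooth and the branch divisor $D \subset Y$ is a reduced relative effective Cartier divisor.

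For a non-constant entire curve $f\colon \bC \to X_s$ with $f(\bC) \not\subset R_s$, set $g = \pi \circ f \colon \bC \to Y_s$. Local normal forms at ramification points give $\pi^* D \geq R$, which yields $T(r, f, R_s) \leq T(r, g, D_s) + O(1)$ and $N^{(1)}(r, g, D_s) \leq N^{(1)}(r, f, R_s^{\mathrm{red}})$. Theorem \ref{truncated SMT for abelian schemes} applies only to orbits of one-parameter subgroups, so I must first reduce from an arbitrary $f$ to such orbits in a suitable quotient torsor. By the Bloch--Ochiai theorem the Zariski closure of $g(\bC)$ is a translate of some abelian subvariety $C_s \subset A_s$, and by Theorem \ref{A finiteness result} combined with Theorem \ref{descending abelian subschemes} there is a finite collection of pairs $(\psi_i\colon T_i \to S,\, B_i \subset A_{T_i})$ catalogueing all such $(s, C_s)$. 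After base change along $\psi_i$ and quotienting by $B_i$, the composite $g$ becomes the orbit of a one-parameter subgroup in the $A_{T_i}/B_i$-torsor $Y_{T_i}/B_i$, to which the SMT applies. An induction on the relative dimension of $Y/S$, mirroring the end of the proof of Theorem \ref{truncated SMT for abelian schemes}, assembles the resulting exceptional loci into a single proper closed subvariety $\Xi \subsetneq X$, outside of which one obtains
\[ T(r, f, R_s) \leq N^{(1)}(r, f, R_s^{\mathrm{red}}) + \epsilon \cdot T(r, f, \pi^* L_s) \,\,||_\epsilon \]
for any prescribed $\epsilon > 0$.

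To conclude, since $R$ is relatively big and $\pi^* L$ is relatively ample, a further shrinking of $S$ ensures that $T(r, f, R_s) \geq c \cdot T(r, f, \pi^* L_s) - O(1)$ for some constant $c > 0$; choosing $\epsilon < c$ turns the previous estimate into $(c - \epsilon) \cdot T(r, f, \pi^* L_s) \leq N^{(1)}(r, f, R_s^{\mathrm{red}}) + O(1) \,\,||_\epsilon$, a ``defect-zero'' inequality for a big divisor on $X$. This is the relative analogue of Yamanoi's central estimate for varieties finite over an abelian variety, and standard Nevanlinna-theoretic arguments following Yamanoi force $f$ into a further proper closed subvariety of $X$, showing that $\Sp_h(X/S)$ is not Zariski dense. \textbf{The main obstacle} is precisely this last passage together with the simultaneous bookkeeping of exceptional loci across all the parameter spaces $T_i$: the truncated SMT estimate is sharp, so extracting genuine algebraic degeneracy from it requires a delicate Noetherian induction that stabilises the exceptional subsets and tracks how each reduction step interacts with the ramification geometry of $\pi$.
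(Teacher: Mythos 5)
Your overall scaffolding (étale reduction to $Y=A$, apply Theorem~\ref{truncated SMT for abelian schemes} to a divisor downstairs, transfer via ramification, and exploit bigness coming from the general-type hypothesis) matches the spirit of the paper's proof, but there is a genuine gap in the middle, and a couple of other inaccuracies worth flagging.

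\textbf{The main gap: reducing to one-parameter subgroup orbits.} You propose to handle an arbitrary entire curve $f\colon\bC\to X_s$ by base-changing along some $\psi_i\colon T_i\to S$, quotienting by an abelian subscheme $B_i$, and then claiming that $g=\pi\circ f$ ``becomes the orbit of a one-parameter subgroup'' in the quotient torsor. This does not work: the Zariski closure of $g(\bC)$ is a translate of some $C_s\subset A_s$, so quotienting by (anything containing) $C_s$ makes $g$ constant, and quotienting by a smaller subscheme leaves $g$ an arbitrary curve, not a one-parameter subgroup orbit. The paper sidesteps this entirely with a much simpler observation: by Kawamata's structure theorem \cite[Theorem 3]{Kawamata-Bloch}, the Zariski closure of \emph{any} entire curve in $X_s$ is an abelian variety $B\subset X_s$, and the induced map $B\to\pi(B)\subset A_s$ is finite \'etale, so $B$ itself is filled up by its one-parameter subgroups. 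Hence $\Sp_h(X_s)$ is \emph{already} covered by images of holomorphic maps $f$ with $\pi\circ f$ a one-parameter subgroup of $A_s$. There is no need for Theorem~\ref{A finiteness result}, Theorem~\ref{descending abelian subschemes}, or the Noetherian induction you invoke here --- that machinery is used \emph{inside} the proof of Theorem~\ref{truncated SMT for abelian schemes}, not again at this stage.

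\textbf{The ramification transfer.} Your identities $\pi^*D\geq R$ and $N^{(1)}(r,g,D_s)\leq N^{(1)}(r,f,R_s^{\mathrm{red}})$ are in the right spirit but imprecise, because $\pi^{-1}(D)$ is generally strictly larger than $R$ and the truncated counting functions do not match on the nose. The paper first passes to a resolution of singularities $\nu\colon\tilde X\to X$ (not merely the normalization) and then uses the twisted ramification formula (Lemma~\ref{twisted ramification formula}) to produce a reduced relative divisor $G\subset A$ and an effective $H\subset\tilde X$ with $\tilde\pi^*(\omega_{A/S}(G))=\omega_{\tilde X/S}(D+H)$, where by \emph{definition} $D=(\tilde\pi^{-1}G)^{\mathrm{red}}$. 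With that normalization $N^{(1)}(r,\pi\circ f,G_s)=N^{(1)}(r,\tilde f,D_s)$ holds on the nose, and Nevanlinna's inequality can then absorb $D$ cleanly.

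\textbf{The conclusion.} You gesture at ``standard Nevanlinna-theoretic arguments following Yamanoi,'' but no further argument is needed: Kodaira's lemma (applied to the generic fibre of $\tilde X/S$) gives a positive integer $l$ with $(\omega_{\tilde X/S}(H))^{\otimes l}=\tilde\pi^*L\otimes M\otimes\cO_{\tilde X}(E)$ for ample $M$ on $\tilde X$ and effective $E$. Running the truncated SMT with $\epsilon=1/l$ and feeding the resulting estimate into this bigness inequality directly yields $T(r,\tilde f,M_s)\leq 0\;||$, which is impossible for non-constant $\tilde f$ and ample $M_s$. So the exceptional locus $\pi^{-1}(\Xi)\cup\nu(E\cup H)\cup Z$ (with $Z$ the non-isomorphism locus of $\nu$) already does the job, with no further iteration.

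In short: once you recognize that Kawamata's theorem lets you assume $\pi\circ f$ \emph{is} a one-parameter subgroup from the outset, the proof collapses to a single application of Theorem~\ref{truncated SMT for abelian schemes} plus a clean bigness contradiction, and all the quotient-torsor bookkeeping you were worried about disappears.
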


Before proving Theorem \ref{key result-finite surjective}, we need a first result.

\begin{lem}\label{twisted ramification formula}
Let $X \to S$ and $Y \to S$ be two smooth fibrations between complex integral schemes. Let $\pi \colon X \to Y$ be a generically finite surjective morphism. Then, up to shrinking $S$, there exists a reduced relative effective Cartier divisor $G \subset Y$ and a relative effective Cartier divisor $H \subset X$ such that, letting $D := \left( \pi^{-1} G \right)^{red}$, 
\[ \pi^\ast \left( \omega_{Y / S}\left( G \right) \right) = \omega_{X / S} \left(D + H \right) . \]
\end{lem}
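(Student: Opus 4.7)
The plan is to reduce the desired identity to the relative Riemann--Hurwitz formula. Both $X$ and $Y$ are smooth of the same relative dimension over $S$ (since $\pi$ is generically finite and dominant), and because we work in characteristic zero the morphism $\pi$ is generically étale. Hence the natural $\cO_X$-linear map $\pi^{\ast}\Omega^1_{Y/S}\to\Omega^1_{X/S}$ of locally free sheaves of equal rank is generically an isomorphism and degenerates along a canonically defined effective Cartier divisor $R\subset X$, the relative ramification divisor. Taking top exterior powers gives
\[
\omega_{X/S}\;\cong\;\pi^{\ast}\omega_{Y/S}\otimes\cO_X(R).
\]
Once this is in hand, the claimed formula is equivalent to the equality of Cartier divisors $\pi^{\ast}G=D+R+H$, so the task reduces to choosing $G$ in such a way that $H:=\pi^{\ast}G-D-R$ is an effective Cartier divisor flat over $S$.

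I would take $G$ to be the reduced closure of the branch locus $\pi(\mathrm{supp}\,R)$. At the generic point of each component $R_i$ of $R$, tameness of characteristic-zero ramification puts $\pi$ in the standard form $x_1\mapsto x_1^{e}$ in suitable coordinates; this both shows that the image of $R_i$ is of pure codimension one in $Y$ (so that $\overline{\pi(\mathrm{supp}\,R)}$ is a Weil divisor, hence Cartier by smoothness of $Y$ over $S$) and fixes the local structure of $\pi^{\ast}G$ near $R_i$. After a further shrinking of $S$, generic flatness ensures that both $G$ and $D:=(\pi^{-1}G)^{\mathrm{red}}$ are flat over $S$. I propose to take $H=0$, a (trivially) relative effective Cartier divisor.

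The core verification is that $\pi^{\ast}G=D+R$ as Cartier divisors, which can be checked at the generic point of each irreducible component $D_i$ of $D$. Let $n_i$ be the ramification index of $\pi$ along $D_i$. Reducedness of $G$ gives that $D_i$ appears in $\pi^{\ast}G$ with multiplicity $n_i$ and in $D$ with multiplicity $1$. By our choice of $G$, every component of $R$ is among the $D_i$, and tameness yields that the multiplicity of $R$ along $D_i$ equals $n_i-1$ (interpreted as $0$ when $D_i$ is unramified, where $n_i=1$). Comparing multiplicities gives $n_i=1+(n_i-1)$ along each $D_i$, hence the equality globally.

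The main obstacle I anticipate is the clean simultaneous arrangement, after successive shrinkings of $S$, of: (i) purity in codimension one of the branch locus $\overline{\pi(\mathrm{supp}\,R)}$ (so that $G$ is genuinely a Weil divisor rather than of higher codimension), (ii) the Cartier and reducedness properties of $G$, and (iii) the flatness of $G$ and of $D$ over $S$. Each of these is standard---purity of the branch locus for tame generically étale maps, smoothness of $Y$ over $S$, and generic flatness---but one must perform the shrinkings compatibly. Once these geometric preparations are in place, the ramification-index calculation above is purely formal.
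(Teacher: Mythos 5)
The heart of your argument --- invoking the relative Riemann--Hurwitz formula $\omega_{X/S}\cong\pi^{*}\omega_{Y/S}\otimes\cO_X(R)$ and reducing the statement to the effectivity of $\pi^{*}G-D-R$ --- is the right structural move, but the choice $G=\overline{\pi(\mathrm{supp}\,R)}$ together with $H=0$ is not correct, and the failure occurs precisely at the point you flag as ``standard'' at the end of your write-up. Zariski--Nagata purity of the branch locus is a theorem about \emph{quasi-finite} dominant morphisms of regular schemes; it does not hold for morphisms that are merely generically finite. Such a morphism can have \emph{exceptional} prime divisors $E\subset X$ contracted to subsets $\pi(E)\subset Y$ of codimension $\geq 2$; these contribute to $R$ with coefficient equal to the discrepancy $a_E>0$, so $\pi(\mathrm{supp}\,R)$ need not be of pure codimension one in $Y$. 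This is exactly the situation arising in the paper's intended application (step 3 of the proof of Theorem~\ref{key result-finite surjective}), where $\pi$ is a resolution of singularities followed by a finite morphism and therefore typically has exceptional divisors.

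Once $R$ has an exceptional component $E$, your multiplicity check breaks down along $E$: one computes $\mathrm{ord}_E(\pi^{*}G-D-R)=\mathrm{ord}_E(\pi^{*}G)-1-a_E$, which is not identically zero and can be strictly negative for the naive choice of $G$ (even after replacing the branch locus by a genuine divisor that contains it). To repair the argument, one must \emph{enlarge} $G$ --- e.g.\ by adding enough general reduced very ample divisors passing through each $\pi(E)$ --- so that $\mathrm{ord}_E(\pi^{*}G)\geq a_E+1$ for every exceptional $E$, and then set $H:=\pi^{*}G-D-R$; this $H$ is then effective but genuinely nonzero. This is precisely why the statement carries the auxiliary divisor $H$ at all. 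The relative claims (flatness over $S$ after shrinking) are the easy part and pose no real difficulty.

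For the record, the paper's own proof is quite different in character: it simply quotes the absolute case ($S$ a point) from Yamanoi and spreads the construction out from the generic point of $S$, rather than redoing the birational geometry. Your attempt at a self-contained argument is a reasonable alternative route, but as written the assertion $H=0$ is false whenever $\pi$ is not finite.
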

\begin{proof} When $S$ is a field, this is proved in \cite[Proof of Corollary 3.23]{Yamanoi-lectures}. The general case follows by applying the lemma over the generic point $\eta$ of $S$ and observing that everything extends over a non-empty Zariski-open neighborhood of $\eta$ in $S$.
\end{proof}

\begin{proof}[Proof of Theorem \ref{key result-finite surjective}]
We keep the notations from the statement. Observe that one can freely replace $S$ by a dense Zariski open subset. More generally, thanks to Corollary \ref{base change-special sets}, one can replace $S$ by an étale cover, therefore one can assume from the beginning that $Y \to S$ admits a section, that one can use to identify $Y$ with $A$.\\

1. Up to shrinking $S$, there exists a smooth $S$-variety $\tilde{X}$ and  a proper birational morphism $\nu \colon \tilde{X} \to X$. Write $\tilde{\pi} = \pi \circ \nu \colon \tilde{X} \to A$. Fix a line bundle $L \to A$ which is relatively ample over $S$ and $M \to \tilde {X}$ a line bundle which is relatively ample over $S$.\\

2. Let $Z \subsetneq X$ be the smallest closed subscheme such that $\nu$ is an isomorphism over $X \backslash Z$. In particular, every non-constant holomorphic map $f \colon \bC \to X$ whose image is not contained in $Z$ lifts uniquely to a holomorphic map $\tilde{f} \colon \bC \to \tilde{X}$.\\

3. Thanks to Lemma \ref{twisted ramification formula}, up to shrinking $S$, there exists a reduced relative effective Cartier divisor $G \subset A$ and a relative effective Cartier divisor $H \subset \tilde{X}$ such that, letting $D := \left( \tilde{\pi}^{-1} G \right)^{red}$, 
\begin{equation*}
\tilde{\pi}^\ast \left( \cO_A \left( G  \right)\right) =  \tilde{\pi}^\ast \left( \omega_{A / S}\left( G \right) \right) = \omega_{\tilde{X} / S} \left(D + H \right) .     
\end{equation*}
(Note that $\omega_{A / S}$ is trivial since $A / S$ is an abelian scheme.)\\

4. Since the morphism $\tilde{X} \to S$ is of general type, by applying Kodaira's lemma to the generic fibre of $\tilde{X} \slash S$, it follows that, up to shrinking $S$, there exists $l$ a positive integer and $E \subset \tilde{X}$ an effective Cartier divisor such that 
\[ \left( \omega_{\tilde{X} / S} \left(H \right) \right)^{\otimes l} = \tilde{\pi}^\ast L \otimes M \otimes \cO_{\tilde{X}}(E). \]  
In particular, for every $s \in S$ and every non-constant holomorphic map $\tilde{f} \colon \bC \to \tilde{X}_s$ such that $\tilde{f}(\bC) \not \subset E \cup H$, we have the following inequality
\begin{equation}\label{from ample to big}
   T(r, \tilde{f}, \omega_{\tilde{X} / S} \left(H \right)_s ) \geq  \frac{1}{l} \cdot \left( T(r, \tilde{f}, M_s) +  T(r,\pi \circ f,L_s) \right)  \; || .  
\end{equation} 

5. Applying Theorem \ref{truncated SMT for abelian schemes} to the reduced effective Cartier divisor $G \subset A$ and $\epsilon = \frac{1}{l}$, there exists $\Xi = \Xi(A, G, L, \epsilon)  \subsetneq A$ a closed subscheme such that for every $s \in S$, if $f \colon \bC \to A_s$ is a one-parameter subgroup such that $f (\bC) \not \subset \Xi$, then $G_s$ is a reduced effective Cartier divisor and 
\begin{equation}\label{SMT}
T(r,f,G_s) \leq N^{(1)}(r,f, G_s) + \epsilon \cdot T(r,f, L_s) \; ||_{\epsilon} .     
\end{equation}

6. To finish the proof, we will check that $\Sp_h(X / S) \subset \pi^{-1}(\Xi) \cup \nu(E \cup H) \cup Z$. Fix $s \in S$ and let $f \colon \bC \to X_s$ be a non-constant holomorphic map. By a theorem of Kawamata \cite[Theorem 3]{Kawamata-Bloch}, the Zariski-closure of $f(\bC)$ in $X_s$ is an abelian variety $B \subset X_s$. Moreover, letting $C := \pi(B)$, the induced morphism $B \to C$ is finite étale. It follows that $\Sp_h(X_s)$ is covered by the images of those holomorphic maps $f\colon \bC \to X_s$ such that the composition $\pi \circ f \colon \bC \to A_s$ is a one-parameter subgroup.\\

7. Fix $s \in S$. Let $f \colon \bC \to X_s$ be a holomorphic map and assume that the composition $\pi \circ f \colon \bC  \to A_s$ is a one-parameter subgroup. Assume also that $f(\bC) \not \subset Z$, so that $f$ lifts uniquely to a holomorphic map $\tilde{f} \colon \bC \to \tilde{X}_s$. If moreover $\pi \circ f (\bC) \not \subset \Xi$, then, by $\eqref{SMT}$, $G_s$ is a reduced effective Cartier divisor and 
\begin{equation*}
T(r,\pi \circ f,G_s) \leq N^{(1)}(r,\pi \circ f, G_s) + \epsilon \cdot T(r,\pi \circ f,L_s) \; ||_{\epsilon} .     
\end{equation*}

Since $ \tilde{\pi}^\ast \left( \cO_A \left( G  \right)\right)  = \omega_{\tilde{X} / S} \left(D + H \right) $, we have 
\begin{equation*}
T \left(r, \tilde{f}, \left( \omega_{\tilde{X} / S} \left(D + H \right) \right)_s \right) =
T\left(r, \pi \circ f, G_s \right),    
\end{equation*}
whereas the equality $D = \left( \tilde{\pi}^{-1} G \right)^{red}$ implies that
\begin{equation*}
N^{(1)}(r,\pi \circ f, G_s) = N^{(1)}(r,\tilde{f}, D_s).    
\end{equation*}

It follows from the last three equations that
\begin{equation*}
T \left(r, \tilde{f}, \omega_{\tilde{X} / S} \left(D + H \right)_s \right) \leq N^{(1)}(r,\tilde{f}, D_s)  + \epsilon \cdot T(r,\pi \circ f,L_s) \; ||_{\epsilon} .    
\end{equation*}

Using Nevanlinna's inequality (cf. Theorem \ref{Nevanlinna's inequality})
\[ N^{(1)}(r,\tilde{f}, D_s) \leq  T(r,\tilde{f}, D_s),\]
we obtain that
\begin{equation*}
T(r, \tilde{f}, \omega_{\tilde{X} / S} \left(H \right)_s ) \leq   \epsilon \cdot T(r,\pi \circ f,L_s)  \; ||_{\epsilon} .     
\end{equation*}
This implies that $\tilde{f}(\bC) \subset E \cup H$, since otherwise, recalling that $\epsilon = \frac{1}{l}$, the equation \eqref{from ample to big} would imply
\begin{equation*}
T(r, \tilde{f}, M_s) \leq   0  \; ||_{\epsilon} ,    
\end{equation*}
a contradiction since $\tilde{f}$ is not constant and $M_s$ is ample. 
\end{proof}

%%%%%%%%%%%%%%%%%%%%%%%%%%%%%%%%%%%%%%%
\subsection{End of the proof of Theorem \ref{relative Lang conjecture in the abelian case}}

From now on, say that a proper morphism $X \to Y$ between two complex algebraic varieties satisfies $\cP$ if there exists an abelian scheme $A \to Y$, an $A$-torsor $P \to Y$ and a finite $Y$-morphism $X \to P$. Then $\cP$ satisfies evidently the stability properties stated at the beginning of this section. Recall the following result of Kawamata.

\begin{thm}[Kawamata, {\cite[Theorem 13]{Kawamata-characterization}}] 
Let $X$ be a normal proper integral complex algebraic variety which admits a finite morphism $X \to A$ to an abelian variety $A$. Up to replacing $X$ with a finite étale cover, $X$ is isomorphic to a product $B \times X^\prime$ of an abelian variety $B$ and a normal proper integral complex algebraic variety $X^\prime$ whose dimension is equal to the Kodaira dimension $\kappa(X)$ of $X$.
\end{thm}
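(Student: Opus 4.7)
The plan is to combine Ueno's structure theorem for closed subvarieties of abelian varieties (the absolute version of Theorem \ref{relative Ueno fibration}) with Poincaré's complete reducibility theorem, followed by a rigidity argument that trivializes an arising torsor after a further finite étale cover.

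First, let $Y \subset A$ denote the reduced image of the finite morphism $X \to A$, and let $B$ be the identity component of the stabilizer $\mathrm{Stab}_A(Y)$. Ueno's theorem identifies $Y$ as the preimage under the quotient $p \colon A \to A/B$ of a closed subvariety $Y' \subset A/B$ that is of general type and satisfies $\dim Y' = \kappa(Y)$. Since $X$ is normal, the finite surjective morphism $X \to Y$ factors through the normalization $\tilde Y$ of $Y$, and $\tilde Y$ is a $B$-torsor over the normalization $\tilde Y'$ of $Y'$. By Poincaré's complete reducibility theorem, one may choose an abelian subvariety $C \subset A$ with $B + C = A$ and $B \cap C$ finite, so that the addition morphism $\phi \colon B \times C \to A$ is an isogeny. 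Using the $B$-invariance of $Y$, a direct calculation shows
\[ \phi^{-1}(Y) = B \times Y_0, \qquad Y_0 := Y \cap C \subset C, \]
and $Y_0 \to Y'$ is a finite étale cover (the restriction of the isogeny $C \to A/B$). Set $\hat X := X \times_A (B \times C)$, a finite étale cover of $X$ (up to passing to an irreducible component, which we do tacitly). The composition $\hat X \to B \times C \to C$ lands in $Y_0$, and its Stein factorization produces a fibration $\hat X \to X'$ with $X'$ normal proper and admitting a finite morphism to $Y_0$.

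To obtain the product decomposition $\hat X \simeq B \times X'$, one argues that the fibers of $\hat X \to X'$ are abelian varieties isogenous to $B$ (connected étale covers of the $B$-fibers of $B \times Y_0 \to Y_0$), and that after a further finite étale base change — absorbed into a larger étale cover of $X$ — the resulting $B$-torsor $\hat X \to X'$ trivializes. Granting this splitting, $X'$ is a finite cover of $Y_0$, hence, via the étale isogeny $Y_0 \to Y'$, a finite cover of the general-type variety $Y'$; thus $X'$ is of general type by Proposition \ref{finite morphism-general type}. Finally $\kappa(X) = \kappa(\hat X) = \kappa(B \times X') = \kappa(X') = \dim X'$, using that finite étale covers preserve the Kodaira dimension and that $\kappa(B) = 0$ for the abelian variety $B$.

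The main obstacle is the splitting step: although $B$ acts naturally on $A$, on $Y$, and hence on $\tilde Y$, this action does not a priori lift to the finite cover $X \to \tilde Y$, so the translation $B$-action on the second factor of $B \times C$ does not automatically descend through the fiber product defining $\hat X$. The required lift exists only after an additional finite étale cover, whose existence rests on the rigidity of families of abelian varieties together with the observation that torsors under a fixed abelian variety over a base which is finite over an abelian variety are classified by a discrete group that is killed by a suitable finite étale base change.
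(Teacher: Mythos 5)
The paper cites this as \cite[Theorem 13]{Kawamata-characterization} and gives no proof of it, so there is no internal argument to compare against; Kawamata's own proof proceeds via the Iitaka fibration of $X$ together with his characterization of normal varieties of Kodaira dimension zero admitting a finite morphism to an abelian variety.

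Your proposal has a genuine gap, and it occurs \emph{before} the torsor-trivialization step you flag: the abelian factor is chosen incorrectly. You take $B$ to be the identity component of $\mathrm{Stab}_A(Y)$ for $Y := \pi(X) \subset A$, so Ueno's theorem gives $\dim Y' = \kappa(Y)$, and your construction then forces $\dim X' = \dim Y_0 = \dim Y' = \kappa(Y)$. But the theorem requires $\dim X' = \kappa(X)$, and one can have $\kappa(X) > \kappa(Y)$ strictly whenever $X \to Y$ is ramified. A concrete counterexample to your decomposition: let $X$ be a bielliptic curve, a smooth projective curve of genus $\geq 2$ with a degree-$2$ morphism to an elliptic curve $E$. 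Then $Y = E$, $B = E$, $Y'$ is a point, and $X'$ is a point, so the proposed splitting would make some finite \'etale cover of $X$ isomorphic to $E$ --- impossible, since every connected finite \'etale cover of a genus $\geq 2$ curve again has genus $\geq 2$. In fact the correct abelian factor $B$ is determined by the general fiber of the Iitaka fibration of $X$ (which has $\kappa = 0$, maps finitely onto a translate of an abelian subvariety of $A$, and is itself an abelian variety up to \'etale cover by Kawamata's Theorem~1 in the same paper), not by the stabilizer of the image $Y$. Separately, the rationale you offer for the splitting --- that $B$-torsors over $X'$ form a discrete group killed by some finite \'etale base change --- is not a correct statement: $H^1_{\mathrm{et}}(X', B)$ is in general neither discrete nor annihilated by any finite cover, and the actual argument rests on the structure of the Iitaka fibration and a rigidity lemma, not a blanket classification of torsors.
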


In particular, if a proper integral complex algebraic variety $X$ admits a finite morphism to an abelian variety, and if $X$ is not of general type, then $\Sp_{alg}(X) = \Sp_{ab}(X) = \Sp_h(X) = X$.  Therefore, thanks to Proposition \ref{first reduction}, in order to prove Theorem \ref{relative Lang conjecture in the abelian case}, one is reduced to prove the following special case:

\begin{thm}\label{key result}
Let $X \to S$ be a proper morphism between two integral complex algebraic varieties. Assume that $S$ is integral and that the generic fibre of $X \to S$ is geometrically integral. Assume that there exists an abelian scheme $A \to S$, an $A$-torsor $P \to S$ and a finite $S$-morphism $X \to P$. If $X \to S$ is of general type, then $\Sp_h(X / S)$ is not Zariski-dense in $X$.
\end{thm}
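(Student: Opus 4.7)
My plan is to reduce Theorem \ref{key result} to the two already established cases — Theorem \ref{key result-closed immersion} (closed immersions into a torsor under an abelian scheme) and Theorem \ref{key result-finite surjective} (finite surjections onto such a torsor) — via a relative Ueno construction. Since $\Sp_h(X/S)$ behaves well under restriction of the base (through Corollary \ref{base change-special sets}), I may freely shrink $S$ to a dense open throughout.

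Applying Corollary \ref{corollary Ueno fibration} to the finite $S$-morphism $X \to P$ produces, after shrinking $S$, an abelian subscheme $B \subset A$, the image $Y \subset P$ of $X$, and the image $Z \subset P/B$ of $X$, such that $X \to Y$ is finite surjective, $Y \to Z$ is a $B_Z$-torsor, and $Z \to S$ is proper of general type with geometrically integral generic fibre. Since $P/B$ is an $(A/B)$-torsor over $S$, Theorem \ref{key result-closed immersion} applied to $Z \hookrightarrow P/B$ gives that $\Sp_h(Z/S)$ is not Zariski-dense in $Z$. Moreover, applying Proposition \ref{composition general type}(2) to the factorisation $X \to Z \to S$ of the morphism of general type $X \to S$ shows that $X \to Z$ itself is of general type. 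Granting for the moment that $\Sp_h(X/Z)$ is not Zariski-dense in $X$, Proposition \ref{composition-special sets} yields
\[ \Sp_h(X/S) \subset \Sp_h(X/Z) \cup g^{-1}(\Sp_h(Z/S)), \]
where $g: X \to Z$ is the composite, and the right-hand side lies in a proper closed subset of $X$ (using that $g$ is surjective and $\overline{\Sp_h(Z/S)} \subsetneq Z$). This reduces the problem to the vertical direction.

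The main obstacle is thus to handle $\Sp_h(X/Z)$ via Theorem \ref{key result-finite surjective} on the tower $X \to Y \to Z$: the hypotheses of that theorem require the generic fibre of $X \to Z$ to be geometrically integral, which in general fails. To address this, I would apply Lemma \ref{reduce to geometrically integral} to $X \to Z$ to obtain a dense open $U \subset Z$ and a finite étale surjective cover $Z' \to U$ (taken irreducible by passing to a connected component) such that every irreducible component $X_i$ of $X_{Z'}$ has geometrically integral generic fibre over $Z'$. Since $X_{Z'} \to X$ is finite étale and $X$ is integral, $X_{Z'}$ is equidimensional of dimension $\dim X = \dim Y$, so each $X_i$ matches the dimension of the irreducible torsor $Y_{Z'}$ and therefore maps finite-surjectively onto $Y_{Z'}$; moreover each $X_i \to Z'$ is of general type, by Proposition \ref{finite morphism-general type} applied to the finite étale cover $X_i \to X$ (together with $X \to Z$ of general type), followed by Proposition \ref{composition general type}(2) for the factorisation $X_i \to Z' \to Z$. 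Theorem \ref{key result-finite surjective} applied to each tower $X_i \to Y_{Z'} \to Z'$ then yields that $\Sp_h(X_i/Z')$ is not Zariski-dense in $X_i$; taking the union over $i$ and invoking Corollary \ref{base change-special sets} along the dominant morphism $Z' \to Z$ gives $\Sp_h(X/Z)$ not Zariski-dense in $X$, completing the proof.
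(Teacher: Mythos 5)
Your proposal follows the same main route as the paper: apply Corollary \ref{corollary Ueno fibration} to produce the tower $X \to Y \to Z$ (where $Z \subset P/B$ is the image of $X$), invoke Theorem \ref{key result-closed immersion} for $Z \hookrightarrow P/B$, invoke Theorem \ref{key result-finite surjective} for the $B_Z$-torsor direction, and glue the conclusions with Proposition \ref{composition-special sets}. That is exactly what the paper does.

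The noteworthy difference is your extra step addressing the hypothesis of Theorem \ref{key result-finite surjective}. That theorem requires the generic fibre of the composite $X \to Z$ to be geometrically integral, and — as you observe — this is not automatic even under the assumptions of Theorem \ref{key result}: the generic fibre of $X \to Z$ is integral (as $X$ is integral and $X \to Z$ dominant), but there is no a priori reason that $\kappa(Z)$ is algebraically closed in $\kappa(X)$, since $\kappa(X)/\kappa(Y)$ is an arbitrary finite extension. The paper's own proof applies Theorem \ref{key result-finite surjective} to $X \to Y \to Z$ without verifying this hypothesis. Your patch — applying Lemma \ref{reduce to geometrically integral} to $X \to Z$, passing to a finite \'etale cover $Z' \to U \subset Z$, showing each component $X_i$ of $X_{Z'}$ is finite surjective onto the irreducible torsor $Y_{Z'}$ and of general type over $Z'$, applying the theorem there, and descending — is a correct way to fill this in and gives the asserted conclusion. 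One small imprecision: at the end you invoke Corollary \ref{base change-special sets}, whose hypotheses require the total space to be irreducible, whereas $X_{Z'}$ decomposes as $\cup_i X_i$; it is cleaner to use the Proposition with the same label (the preimage description of $\Sp_h$) together with the fact that $X_{Z'} \to X$ is finite \'etale, hence open, so non-density upstairs descends to non-density downstairs. Apart from that cosmetic point, the proposal is correct and in fact more careful than the paper's own treatment of this step.
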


To prove Theorem \ref{key result}, one can also freely replace $S$ by any non-empty Zariski-open subset. We already know that Theorem \ref{key result} holds when $X \to P$ is either a closed immersion or finite surjective, cf. Theorem \ref{key result-closed immersion} and Theorem \ref{key result-finite surjective}. The general case can be reduced to these two special cases as follows.\\

We are in position to apply Corollary \ref{corollary Ueno fibration}, so that up to shrinking $S$, there exists a diagram

\[
\begin{tikzcd}
X \arrow[rd] \arrow[r] & W \arrow[d] \arrow[r, hookrightarrow] & P \arrow[d] \\
  & Z \arrow[r, hookrightarrow]& P \slash B
\end{tikzcd}
\]
such that 
\begin{enumerate}
\item $Z \to S$ is a proper morphism of general type,
\item the generic fibre of $Z \to S$ is geometrically integral,
\item $P / B$ is an $A/B$-torsor,
\item the morphism $X \to W$ is finite surjective,
\item and $W \to Z$ is a torsor under the abelian scheme $B_Z$.
\end{enumerate}

Since $Z \to S$ is a proper morphism of general type with a geometrically integral generic fibre and $Z \subset P \slash B$ is a closed immersion, it follows from Theorem \ref{key result-closed immersion} that $\Sp_h(Z \slash S)$ is not Zariski-dense in $Z$. On the other hand, thanks to Proposition \ref{composition general type}, the proper surjective morphism $X \to Z$ is of general type, because the composite morphism $X \to Z \to S$ is a proper surjective morphism of general type. Since the morphism $X \to W$ is finite surjective and $W \to Z$ is a torsor under an abelian scheme, it follows from Theorem \ref{key result-finite surjective} that $\Sp_h(X \slash Z)$ is not Zariski-dense in $X$. Thanks to Proposition \ref{composition-special sets}, we conclude that $\Sp_h(X \slash S)$ is not Zariski-dense in $X$.

This concludes the proofs of Theorem \ref{key result} and Theorem \ref{relative Lang conjecture in the abelian case}.

%%%%%%%%%%%%%%%%%%%%%%%%%%%%%%%%%%%%%%%%%%%%%%%%%%%
\subsection{Proof of Theorem \ref{main corollary}}

Let $f\colon X \to Y$ be a smooth proper surjective morphism with connected fibres between complex algebraic varieties. Assume that there exists a geometric point $\bar \eta \colon \Spec \left( \Omega \right) \to Y$ such that $X_{\bar \eta}$ is of general type and of maximal Albanese dimension. One can assume without any loss of generality that $Y$ is integral. Up to replacing $Y$ with a desingularization, one can assume that $Y$ is non-singular thanks to Corollary \ref{base change-special sets}. Moreover, up to replacing $Y$ with an étale cover whose image contains the image of $\bar \eta$, one can assume that $f \colon X \to Y$ admits a section $\sigma \colon Y \to X$. Let $\Alb(X/Y) \to Y$ denote the relative Albanese morphism, and let $X \to \Alb(X/Y)$ be the $Y$-morphism associated to the section $\sigma$. Let $X \to Z \to \Alb(X/Y)$ be the Stein factorization of the proper morphism $X \to \Alb(X/Y)$. Since by assumption $X_{\bar \eta}$ is of maximal Albanese dimension, one has that $\dim(X_{\bar \eta})= \dim(Z_{\bar \eta})$. But the induced morphism $X_{\bar \eta} \to Z_{\bar \eta}$ is a fibration, hence it has to be an isomorphism over a non-empty open of $Z_{\bar \eta}$ (recall that for any proper morphism of schemes $X \to Z$, the function which associates to $z \in Z$ the dimension of $X_z$ is upper semi-continuous). In particular, the fibration $g \colon X \to Z$ admits a geometric fibre which is isomorphic to a (reduced) point, hence $g$ is birational, i.e. there exists a dense Zariski open subset $U \subset Z$ such that the induced morphism $g_{|g^{-1}(U)} \colon g^{-1}(U) \to U$ is an isomorphism. If $\bar \mu \colon \Spec\left( \Omega \right) \to Y$ is a geometric point lying over the generic point of $Y$, this implies that the induced $\Omega$-morphism $X_{\bar \mu} \to Z_{\bar \mu}$ is birational. Thanks to Theorem \ref{generization general type}, the $\Omega$-variety $X_{\bar \mu}$ is of general type, hence the (normal proper integral) $\Omega$-variety $Z_{\bar \mu}$ is of general type. In other words, the proper morphism $Z \to Y$ is of general type. Since the morphism $Z \to \Alb(X/Y)$ is finite by definition, it follows from Theorem \ref{relative Lang conjecture in the abelian case} that, for every $\ast \in \{alg, ab , h\}$, $\Sp_\ast \left(Z / Y \right)$ is not Zariski dense in $Z$. Thanks to Proposition \ref{composition-special sets}, it follows that $\Sp_\ast \left(X/ Y \right)$ is not Zariski dense in $X$.\\

To conclude, let us remark that the proof above gives the following weak analogue of Theorem \ref{generization general type}.

\begin{prop}
Let $X \to Y$ be a surjective smooth proper morphism  between irreducible complex varieties. If there exists a geometric point $\bar \eta \colon \Spec \left( \Omega \right) \to Y$ such that $X_{\bar \eta}$\ is of maximal Albanese dimension, then, for every geometric point $\bar \mu \colon \Spec \left( \Omega \right) \to Y$ lying over the generic point of $Y$, the smooth proper $\Omega$-variety $X_{\bar \mu}$\ is of maximal Albanese dimension.
\end{prop}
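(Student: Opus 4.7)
The plan is to repeat the geometric construction used in the proof of Theorem \ref{main corollary}. First, I would replace $Y$ by an étale cover whose image contains the image of $\bar \eta$ — harmless since the statement concerns only geometric fibres of $f$ — and thereby arrange that $f$ admits a section $\sigma \colon Y \to X$. The section lets me form the relative Albanese morphism $a \colon X \to \Alb(X/Y)$ as a $Y$-morphism to an abelian scheme over $Y$; its formation commutes with base-change to a geometric fibre, so $a_{\bar \eta}$ and $a_{\bar \mu}$ are the usual Albanese morphisms of $X_{\bar \eta}$ and $X_{\bar \mu}$ respectively.

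Next, I would form the Stein factorization of $a$, written $X \to Z \to \Alb(X/Y)$, where the first morphism $g$ is a fibration and the second morphism $h$ is finite. Stein factorization is compatible with flat base-change, and localization at the generic point of $Y$ is flat, so passing to $\bar \mu$ yields the Stein factorization of the Albanese morphism of $X_{\bar \mu}$. By hypothesis, $a_{\bar \eta}$ is generically finite; since $h_{\bar \eta}$ is finite, this forces the generic fibre of the fibration $g_{\bar \eta} \colon X_{\bar \eta} \to Z_{\bar \eta}$ to have dimension zero, hence $g_{\bar \eta}$ is birational.

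As in the proof of Theorem \ref{main corollary}, some geometric fibre of $g \colon X \to Z$ is then a single reduced point, and upper semi-continuity of the fibre dimension for the proper morphism $g$ produces a dense Zariski open $U \subset Z$ over which $g$ restricts to an isomorphism. The morphism $Z \to Y$ is proper and dominant (because $\Alb(X/Y) \to Y$ is surjective and $h$ is surjective and finite), so $U$ dominates $Y$; in particular $U_{\bar \mu}$ is non-empty. Consequently $g_{\bar \mu} \colon X_{\bar \mu} \to Z_{\bar \mu}$ is birational, and composing with the finite morphism $h_{\bar \mu}$ shows that the Albanese morphism of $X_{\bar \mu}$ is generically finite, which is the definition of maximal Albanese dimension.

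I do not anticipate any serious obstacle: this is essentially the same Stein-factorization-plus-upper-semicontinuity step already used in the proof of Theorem \ref{main corollary}. The only facts needed beyond what is already invoked there are the compatibility of the relative Albanese with base-change for smooth proper families admitting a section, and the compatibility of Stein factorization with flat base-change; both are standard.
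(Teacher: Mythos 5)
Your proposal is correct and takes essentially the same Stein-factorization-plus-upper-semicontinuity route as the paper (as you acknowledge at the end): Stein-factorize the relative Albanese morphism through a fibration $g \colon X \to Z$ and a finite $h \colon Z \to \Alb(X/Y)$, use the dimension equality at $\bar\eta$ to make $g$ birational, and then pass to the generic point. One small slip: $h$ need not be surjective, since the Albanese image can be a proper closed subvariety of $\Alb(X/Y)$; however, $Z \to Y$ is still surjective because $X \to Y$ is surjective and factors through $Z$, so the conclusion that $U$ dominates $Y$ is unaffected.
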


\small

\bibliography{./biblio}

\bibliographystyle{amsalpha}

\end{document}